\documentclass[reqno]{gtpart}

\title{Tied--boxed algebras}
\author[D. Arcis and J. Espinoza]{Diego Arcis\\Jorge Espinoza}

\address{
	Departamento de Matem\'aticas, Universidad de La Serena, Cisternas 1200 -- 1700000 La Serena, Chile.\textcolor{white}{$\underbrace{1}$}\newline
	Instituto de Matem\'aticas, Universidad de Talca, Campus Norte, Camino Lircay S/N -- 3460000 Talca, Chile.\textcolor{white}{$\underbrace{.}$}
}

\email{diego.arcis@userena.cl}
\email{joespinoza@utalca.cl}

\subject{primary}{msc2020}{33D80} 
\subject{primary}{msc2020}{20C08} 
\subject{primary}{msc2020}{20M05} 
\subject{secondary}{msc2020}{47A67} 
\subject{secondary}{msc2020}{20M20} 


\usepackage[utf8]{inputenc}

\usepackage{
	accents,
	amsthm,
	amsmath,
	amsfonts,
	cancel, 
	caption,
    float,
    graphicx,
    mathtools,
    mathrsfs,
    soul, 
    subcaption,
    young,
    ytableau
}

\usepackage[all]{xy}
\usepackage[bottom=3.5cm,left=2.8cm,right=2cm,top=3cm]{geometry}
\usepackage[vcentermath]{youngtab}
\usepackage{bbm}

\newtheorem{thm}{Theorem}[section]
\newtheorem{crl}[thm]{Corollary}
\newtheorem{lem}[thm]{Lemma}
\newtheorem{pro}[thm]{Proposition}

\theoremstyle{definition}

\newtheorem{rem}[thm]{Remark}

\numberwithin{equation}{section}

\newcommand{\B}{\mathcal{B}}
\newcommand{\E}{\mathcal{E}}
\newcommand{\G}{\mathcal{G}}
\newcommand{\I}{\mathcal{I}}

\renewcommand{\C}{\mathcal{C}}
\renewcommand{\J}{\mathcal{J}}
\renewcommand{\L}{\mathcal{L}}
\renewcommand{\P}{\mathcal{P}}
\renewcommand{\R}{\mathcal{R}}
\renewcommand{\S}{\mathcal{S}}
\renewcommand{\H}{\mathcal{H}}

\newcommand{\Br}{\mathcal{B}r}

\newcommand{\BR}{\mathcal{BR}}
\newcommand{\LP}{\mathcal{LP}}
\newcommand{\MC}{\mathcal{MC}}
\newcommand{\MPar}{\mathcal{M}\mathrm{Par}}
\newcommand{\RP}{\mathcal{RP}}

\newcommand{\CC}{\mathfrak{C}}

\newcommand{\s}{\mathfrak{s}}
\newcommand{\V}{\mathfrak{v}}

\renewcommand{\b}{\mathfrak{b}}
\renewcommand{\c}{\mathfrak{c}}
\renewcommand{\t}{\mathfrak{t}}
\renewcommand{\u}{\mathfrak{u}}

\newcommand{\N}{\mathbb{N}}

\newcommand{\qring}{\mathbb{S}}

\newcommand{\rE}{\operatorname{E}}

\newcommand{\Ker}{\operatorname{Ker}}
\newcommand{\End}{\operatorname{End}}
\newcommand{\Mat}{\operatorname{Mat}}
\newcommand{\Par}{\operatorname{Par}}
\newcommand{\Std}{\operatorname{Std}}
\newcommand{\Tab}{\operatorname{Tab}}

\newcommand{\FTL}{\operatorname{FLT}}
\newcommand{\PTL}{\operatorname{PTL}}

\newcommand{\id}{{\operatorname{id}}}
\newcommand{\ord}{{\operatorname{ord}}}

\newcommand{\comp}{\operatorname{\textbf{comp}}}
\newcommand{\shape}{\operatorname{shape}}

\newcommand{\bE}{\mathbf{E}}
\newcommand{\bG}{\mathbf{G}}
\newcommand{\bH}{\mathbf{H}}
\newcommand{\bI}{\mathbf{I}}
\newcommand{\bJ}{\mathbf{J}}
\newcommand{\bK}{\mathbf{K}}

\newcommand{\bbB}{\mathbb{B}}
\newcommand{\bbE}{\mathbb{E}}

\newcommand{\bmm}{\mathbf{m}}

\newcommand{\bzz}{\mathbf{z}}

\newcommand{\bms}{\mathbbm{s}}
\newcommand{\bmt}{\mathbbm{t}}

\newcommand{\et}{\boldsymbol{\mathbbm{t}}}
\newcommand{\es}{\boldsymbol{\mathbbm{s}}}

\newcommand{\bt}{\boldsymbol{\mathfrak{t}}}
\newcommand{\bs}{{\boldsymbol{\mathfrak{s}}}}
\newcommand{\bu}{\boldsymbol{\mathfrak{u}}}
\newcommand{\bv}{\boldsymbol{\mathfrak{v}}}
\newcommand{\blam}{{\boldsymbol\lambda}}
\newcommand{\bmu}{{\boldsymbol\mu}}

\newcommand{\ttau}{\tilde{\tau}}

\newcommand{\yyoung}[1]{\scalebox{0.84}{\ytableausetup{boxsize=1.3em,centertableaux}\ytableaushort{#1}}}
\newcommand{\yyng}[1]{\ytableausetup{boxsize=1.05em,centertableaux}\ydiagram{#1}}

\newcommand{\td}{{\tt d}}
\newcommand{\te}{{\tt e}}
\newcommand{\tz}{{\tt z}}
\newcommand{\tsz}{\tz_\square} 
\input{pics.tex}

\begin{document}

\begin{abstract}
We introduce two new algebras that we call \emph{tied--boxed Hecke algebra} and \emph{tied--boxed Temperley--Lieb algebra}. The first one is a subalgebra of the algebra of braids and ties introduced by Aicardi and Juyumaya, and the second one is a tied--version of the well known Temperley--Lieb algebra. We study their representation theory and give cellular bases for them. Furthermore, we explore a strong connection between the tied--boxed Temperley--Lieb algebra and the so--called partition Temperley--Lieb algebra given by Juyumaya. Also, we show that both structures inherit diagrammatic interpretations from a new class of monoids that we call \emph{boxed ramified monoids}. Additionally, we give presentations for the singular part of the ramified symmetric monoid and for the boxed ramified monoid associated to the Brauer monoid.
\end{abstract}

\maketitle

\setcounter{tocdepth}{2}

\section{Introduction}\label{097}

The \emph{algebra of braids and ties} or simply the \emph{bt--algebra} $\E_n(q)$ was introduced by Aicardi and Juyumaya in \cite{Ju99b,AiJu00} by means an abstracting procedure of the so--called \emph{Yokonuma--Hecke algebra} \cite{Yo67,JuKa01}. This algebra can be regarded as a generalization of the well known \emph{Iwahori--Hecke algebra} $\H_n(q)$ \cite{Iw64}, and has a rich interpretation as a diagram algebra involving both the classic \emph{braid generators} and a new family of generators, called \emph{ties}, which arise from the combinatorics of set partitions \cite{AiJu00,RyH11}. The bt--algebra was introduced with the purpose of constructing new representations of the braid group, however, in recent years, this algebra and its derivatives have been studied from various perspectives.

The representation theory of the algebra of braids and ties was initiated by Ryom-Hansen in \cite{RyH11}. Specifically, he defined a tensor representation of $\E_n(q)$ which used to classify the simple modules of this algebra. As a consequence, he obtained a linear basis for the bt--algebra and proved that its dimension is $n!\b_n$, where $\b_n$ denotes the $n$th \emph{Bell number} \cite[A000110]{OEIS}. These results where used by Aicardi and Juyumaya in \cite{AiJu16b} to show that $\E_n(q)$ supports a \emph{Markov trace} and so constructed polynomial invariants for classical and singular knots in the sense of Jones \cite{Jn87}. Aicardi and Juyumaya observed that their invariants can be recovered by a new class of knotted objects that called \emph{tied links}, which in turn can be obtained by closing a new class of braided object that called \emph{tied braids} \cite{AiJu16}. Tied braids (of $n$ strands) form a monoid $T\B_n$ called the \emph{tied braid monoid}, which play the role of algebraic counterpart for tied links just as the braid group is the algebraic counterpart of classical links by means the \emph{Alexander theorem} \cite{Al23a}. Furthermore, this monoid of tied braids was shown to be a semidirect product $T\B_n=\P_n\rtimes\B_n$, where $\P_n$ denotes the monoid of set partitions of $[n]=\{1,\ldots,n\}$ together with the classic product given by refinement \cite{AiJu21}. More results related to knot theory via the bt--algebra and its derivatives can be found in the literature, such as other invariants for tied and singular links \cite{AiJu18,AiJu21}, generalizations of the bt--algebra \cite{AiJu20,Fl19,AiJu21,ArJu21}, tied links in other topological settings \cite{Fl21,Di21}, among others.

The complex generic representation theory of the algebra of braids and ties was studied by Banjo in \cite{Bn13}. For this purpose, she showed that the algebra $\E_n(1)$ is isomorphic to the so--called \emph{small ramified partition algebra} \cite{Mr11}, which is generated by some pairs of set partitions of $[n]$ called \emph{ramified partitions} \cite{MrEl04}. On the other hand, cellularity of the algebra of braids and ties was studied by the second author and Ryom-Hansen in \cite{EsRyH18}. Specifically, they constructed an explicit cellular basis for $\E_n(q)$, whose elements are indexed by a special type of pairs of multitableaux. This construction allowed them to define an explicit isomorphism between the bt--algebra and a direct sum of matrix algebras over certain wreath product algebras indexed by partitions of numbers, whose cellularity was studied by Geetha and Goodman in \cite{GeGo13}. This last result was later generalized by Marin in \cite{Ma18} by defining bt--algebra versions for arbitrary Coxeter groups. See Subsection \ref{107}, Subsection \ref{108} and Subsection \ref{109}.

A first generalization of the tied braid monoid was given by the first author and Juyumaya in \cite{ArJu21} by replacing the respective factors of the semidirect product decomposition of $T\B_n$ with a monoid of set partitions \cite{Re97} and a monoid acting in some Coxeter group. Another generalization was given by the same authors together with Aicardi in \cite{AiArJu23,PreAiArJu22}. Specifically, inspired by the work of Banjo \cite{Bn13}, they introduced a new class of monoids, formed by ramified partitions, that called \emph{ramified monoids}, which are defined for every submonoid of the partition monoid. See Subsection \ref{101}. Particularly, the algebra of braids and ties $\E_n(q)$ can be regarded as a $q$--deformation of the monoid algebra of the ramified monoid $\R(\S_n)$ associated to the symmetric group $\S_n$. See Subsection \ref{106}. Also, due to their combinatorial background, ramified monoids have a rich interpretation in terms of diagrams, and thus any algebra obtained by deformation of their monoid algebras inherit this diagrammatic interpretation. The ramified monoid $\R(\Br_n)$ associated with the \emph{Brauer monoid} $\Br_n$ was studied in \cite{AiArJu23} and the ramified monoid associated with the \emph{symmetric inverse monoid} was studied in \cite{PreAiArJu22}. At the time of writing this paper, the structure of the ramified monoid $\R(\J_n)$ associated to the Jones monoid $\J_n$ is still unknown.

A tied version of the well known \emph{BMW algebra} is the \emph{t--BMW algebra} introduced by Aicardi and Juyumaya in \cite{AiJu18}, which can be regarded as deformation of the monoid algebra of $\R(\Br_n)$. Just as there is no known structure for the monoid $\R(\J_n)$, there also no a tied version of the \emph{Temperley--Lieb algebra} that is obtained as a deformation of the monoid algebra of this monoid. However, there are two tied versions of this algebra that are not directly related to the monoid $\R(\J_n)$. The first one is the \emph{partition Temperley--Lieb algebra} $\PTL_n(q)$ introduced by Juyumaya in \cite{Ju13} in a purely algebraic manner. The second one is the \emph{tied Temperley--Lieb algebra} introduced by Aicardi, Juyumaya and Papi in \cite{PreAiJuPa21}, which is related to a submonoid of $\R(\J_n)$ called the \emph{planar ramified monoid} of $\J_n$ \cite{PreAiArJu22}. A generalization of the partition Temperley--Lieb algebra was studied by Ryom-Hansen in \cite{RyH22}. Specifically, inspired by the construction of \emph{generalized Temperley--Lieb algebras} given in \cite{Ha99}, he introduced certain \emph{permutation modules}, which can be regarded as $\E_n(q)$--submodules of a tensor space module introduced in \cite{RyH11}. To be precise, Ryom--Hansen proved that the annihilator ideal $\I$ of the action of $\E_n(q)$ on this tensor space module is a free $\Z[q,q^{-1}]$--module with a basis given by a dual cellular basis of the one in \cite{EsRyH18}. Thus, he obtained that the quotient $\E_n(q)/\I$ is a simultaneous generalization of the genelized Temperley--Lieb algebra and the partition Temperley--Lieb algebra. In particular, for $N=2$, it is obtained that $\E_n(q)/\I\simeq\PTL_n(q)$, and thus $\PTL_n(q)$ is cellular as well.

In the present paper, we introduce two tied--like algebras that inherit the diagrammatic interpretation from a new class of submonoids of the ramified ones, called \emph{boxed ramified monoids}, which generalize a monoid in \cite[Section 7]{AiArJu23}. We study the representation theory of these algebras and give cellular bases for them. The first one is a subalgebra of the algebra of braids and ties, which we call the \emph{tied--boxed Hecke algebra}. The second one is a new tied version of the Temperley--Lieb algebra, which we call the \emph{tied--boxed Temperley--Lieb algebra}. Additionally, we determine presentations for the singular part of the ramified monoid associated to the symmetric group and for the boxed ramified monoid associated to the Brauer monoid.

The paper is organized as follows.

In Section \ref{098}, we give the main combinatorial and algebraic ingredients that we will use along the paper. Specifically, we recall compositions and multicompositions (Subsection \ref{099}), set partitions and related monoids such as partition monoids and ramified monoids (Subsection \ref{100} and Subsection \ref{101}), and also some cellular algebras such as the Iwahori--Hecke algebra and the Temperley--Lieb algebra (Subsection \ref{102}). Section \ref{105} is devoted to recall the definition of algebra of braids and ties and some important results mainly coming from representation theory (Subsection \ref{107}). We also recall the ramified symmetric monoid, from which the bt--algebra inherits its diagram structure, and study its center (Subsection \ref{106}). In Section \ref{012}, we introduce a new family of monoids that we call \emph{boxed ramified monoids}, which will give diagrammatic structures to the algebras that we will study in the following sections. These monoids are submonoids of the ramified ones, which are defined by imposing a linearity condition on the right components of ramified partitions.

In Section \ref{135}, we introduce the \emph{tied-boxed Hecke algebra} $bH_n(q)$ by generators and relations and study its representation theory (Subsection \ref{110}). Specifically, we establish its connection with the algebra of braids and ties (Proposition \ref{123}), enabling us to compute its dimension (Theorem \ref{130}) and thereby obtain a faithful representation of $bH_n(q)$ in a tensor space module. Subsequently, we construct a complete set of orthogonal idempotents for $bH_n(q)$ (Proposition \ref{propE}), leading to a decomposition of it into a direct sum of two--sided ideals. This allows us to provide an explicit cellular basis for $bH_n(q)$ indexed by linear set partitions and standard Young tableaux of the initial kind (Theorem \ref{131}). Furthermore, we present a diagrammatic representation of $bH_n(q)$ inherited from the boxed ramified monoid of the symmetric group $\BR(\S_n)$ (Subsection \ref{111}). Specifically, we give a presentation for $\BR(\S_n)$ (Theorem \ref{017}), concluding that the tied-boxed Hecke algebra is a $q$--deformation of the monoid algebra of $\BR(\S_n)$. We also prove that the center of $\BR(\S_n)$ coincides with the monoid of compositions of integers (Proposition \ref{089}). Additionally, we delve into the study of the singular part of $\BR(\S_n)$ (Subsection \ref{112}), providing a presentation for it (Theorem \ref{055}).

Finally, in Section \ref{113}, we introduce the \emph{tied--boxed Temperley--Lieb algebra} $bTL_n(q)$ as a quotient of the tied--boxed Hecke algebra by a two--sided ideal generated by \emph{Steinberg elements}. We give a presentation for $bTL_n(q)$ (Proposition \ref{134}) and show that it can be decomposed as a direct sum of two--sided ideals, obtaining an explicit cellular basis for $bTL_n(q)$, which is indexed by multipartitions whose components have Young diagrams with at most two columns and standard multitableaux of the initial kind (Theorem \ref{132}). Then, we explore a strong connection between $bTL_n(q)$ and the partition Temperley--Lieb algebra $\PTL_n(q)$ (Subsection \ref{114}). Specifically, we prove that $\PTL_n(q)$ is isomorphic to a direct sum of matrix algebras over certain wreath product algebras, obtaining a formula for the dimension of $\PTL_n(q)$, which is confirmed in \cite{RyH22} by considering the case $N=2$ of the \emph{generalized partition Temperley--Lieb}. We next observe that the natural embedding of $bH_n(q)$ in $\E_n(q)$ induces an embedding of $bTL_n(q)$ in $\PTL_n(q)$ (Corollary \ref{inclusionbTLn}). Furthermore, we present a diagrammatic realization of $bTL_n(q)$ inherited from the boxed ramified monoid of the Jones monoid (Subsection \ref{115}). Specifically, we recall the presentation for $\BR(\J_n)$ given in \cite[Theorem 56]{AiArJu23} and conclude that the tied--boxed Temperley--Lieb algebra is a $q$--deformation of the monoid algebra of $\BR(\J_n)$. (Theorem \ref{073}). Additionally, we study the boxed ramified monoid of the Brauer monoid (Subsection \ref{116}), providing a presentation for it (Theorem \ref{016}).


\section{Preliminaries}\label{098}

For every pair of integers $m,n$ with $m\leq n$ we will denote by $[m,n]$ the discrete interval $\{m,\ldots,n\}$. Also, if $n$ is positive we set $[n]=[1,n]$ and $[n]_0=[0,n]$.

In what follows $n$ denotes a positive integer, and $\qring$ denotes the ring $\mathbb{C}[q,q^{-1}]$, where $q$ is an indeterminate. Here, an \emph{$\qring$--algebra} means an associative $\qring$--algebra with unit.

\subsection{Compositions and multicompositions}\label{099}

A \emph{composition} of \emph{size} $n$ is a finite sequence $\mu$ formed by $\ell(\mu)$ elements in $[n]_0$, called \emph{parts}, such that their sum is $n$. We denote by $\C_n$ the collection of compositions of $n$ with no null parts. It is known that $|\C_n|=2^{n-1}$ \cite[A000079]{OEIS}. We say that a composition of $n$ is a \emph{partition} if each pair of consecutive parts of it is nonincreasing. The collection of set partitions of $n$ will be denoted by $\Par_n$.

The \emph{Young diagram} or simply the \emph{diagram} of a composition $\mu=(\mu_1,\ldots,\mu_k)$ is the set $[\mu]$ defined as follows:\[[\mu]=\{(i,j)\in\N^2\mid 1\leq j\leq\mu_i\}.\]Elements of $[\mu]$ are called \emph{nodes} and are represented by boxes located at positions $(i,j)$ of an array of size $\ell(\mu)\times n$. See Figure \ref{090}.
\begin{figure}[H]
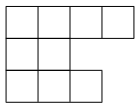
\centering\yng(4,2,3)\caption{Young diagram of $(4,2,3)$.}\label{090}\end{figure}
A \emph{$\mu$--tableau} is a bijection of $[\mu]$ with $[n]$, which is represented by inserting the image of each node, under this bijection, into its respective box. We denote by $\Tab(\mu)$ the set of all $\mu$--tableaux. A tableau is called \emph{row--standard} if its entries increase from left to right. A row--standard tableau is called \emph{standard} if it is defined by a partition and the entries of its columns increase from top to bottom. See Figure \ref{091}. We denote by $\Std(\mu)$ the collection of standard $\mu$--tableaux.
\begin{figure}[H]\centering\yyoung{1378,25,46}\caption{A standard $(4,2,2)$--tableau.}\label{091}\end{figure}
Observe that the symmetric group $\S_n$ acts on the right on the set $\Tab(\mu)$ by permuting the entries in each $\mu$--tableau. For $\mu\in\C_n$, we will denote by $\t^\mu$ the standard $\mu$--tableau in which the numbers in $[n]$ are entered in increasing order from left to right along the rows of $[\mu]$. See Figure \ref{092}.
\begin{figure}[H]\centering\yyoung{1234,56,78}\caption{The standard $\mu$--tableau $\t^\mu$ for $\mu=(4,2,2)$.}\label{092}\end{figure}
For each composition $\mu=(\mu_1,\ldots,\mu_k)\in \C_n$, we denote by $\S_\mu$ the \emph{stabilizer subgroup} of the tableau $\t^\mu$. This subgroup is called the \emph{Young subgroup} of $\mu$, and is isomorphic to a direct product of symmetric groups, that is\[\S_\mu\simeq\S_{\mu_1}\times\cdots\times\S_{\mu_k}.\]For a row--standard $\mu$--tableau $\s$, we denote by $d(\s)$ the unique permutation satisfying $\t^\mu d(\s)=\s$.

Given compositions $\mu,\mu'$ of size $n$, we say that $\mu$ is \emph{finer} than $\mu'$, or that $\mu'$ is \emph{coarser} than $\mu$, denoted by $\mu\triangleleft\mu'$, if $\mu'$ can be obtained by adding consecutive parts of $\mu$. This relations gives to $\C_n$ a structure of lattice with supremum operation $\vee$. The pair $(\C_n,\vee)$ will be called the \emph{monoid of compositions of $n$}.

A \emph{multicomposition} of \emph{size} $n$ is a tuple $\bmu=(\mu^{(1)},\ldots,\mu^{(k)})$ such that $\mu^{(i)}\in\C_{n_i}$ and $n=n_1+\cdots+n_k$. We call $\bmu$ a \emph{multipartition} if each $\mu_i$ is a partition. The collection of multicompositions (resp. multipartitions) of $n$ is denoted by $\MC_n$ (resp. $\MPar_n$). For example, $\bmu=((1,2),(3,2,1),(1,1,1))$ is a multicomposition of $12$. The \emph{Young diagram} or simply the \emph{diagram} of a multicomposition $\bmu=(\mu^{(1)},\ldots,\mu^{(k)})$ is the set $[\bmu]$ defined as follows:\begin{equation}[\bmu]=\{(i,j,c)\in\N^3\mid1\leq j\leq\mu_i^{(c)},\,1\leq c\leq k\}.\end{equation}In other words, the Young diagram of a multicomposition is the tuple formed by the diagrams of its components. See Figure \ref{120}.\begin{figure}[H]\centering$[\bmu]=\left(\,\yng(1,2)\,,\,\yng(3,2,1)\,,\,\yng(1,1,1)\,\right)$\caption{The Young diagram of $\bmu=((1,2),(3,2,1),(1,1,1))$.}\label{120}\end{figure}
For $\bmu\in\MC_n$, a \emph{$\bmu$-multitableau} is a tuple obtained from $[\bmu]$ by inserting the numbers of $[n]$ into its boxes. See Figure \ref{119}. We will write $\shape(\bt)=\bmu$ to indicate that $\bt$ is a $\bmu$-multitableau.
\begin{figure}[H]\centering$\left(\,\yyoung{7,5{10}}\,,\,\yyoung{49{12},16,3}\,,\,\yyoung{2,8,{11}}\,\right)$\caption{A $\bmu$-multitableau with $\bmu$ as in Figure \ref{120}.}\label{119}\end{figure}A multitableau is called \emph{row--standard} if each one of its components is \emph{row--standard}. See Figure \ref{119}. A row--standard multitableau is called \emph{standard} if it is defined by a multipartition and each one of its components is a standard tableau. See Figure \ref{118}. For each multicomposition $\bmu$ and each row--standard $\bmu$--multitableau $\bs$, the elements $\bt^\bmu$ and $d(\bs)$ are defined in the obvious manner as generalizations of $\t^\mu$ and $d(\s)$, respectively.\begin{figure}[H]
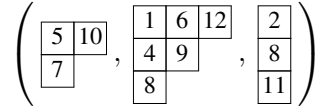
\centering$\left(\,\yyoung{5{10},7}\,,\,\yyoung{16{12},49,8}\,,\,\yyoung{2,8,{11}}\,\right)$\caption{A standard multitableu.}\label{118}\end{figure}

Let $\bmu=(\mu^{(1)},\ldots,\mu^{(k)})$ be a multicomposition of size $n$ such that $\mu^{(j)}\in\C_{n_j}$ for all $j\in[k]$. We denote by $\comp(\bmu)$ the composition given by the size of each component of $\bmu$. That is $\comp(\bmu)=(n_1,\ldots,n_k)$. For example, the composition associate to the multicomposition $\bmu=((1,2),(3,2,1),(1,1,1))$ is $\comp(\bmu)=(3,6,3)$.


\subsection{Set partitions}\label{100}

In what follows $A$ denotes a nonempty set.

A set partition of $A$ is a collection of pairwise disjoint nonempty subsets of it, called \emph{blocks}, such that their union is $A$. The collection of set partitions of $A$ is denoted by $\P(A)$, and $\P([n])$ is denoted by $\P_n$ instead. The cardinality of $\P_n$ is the $n$th \emph{Bell number} $\b_n$ \cite[A000110]{OEIS}. 

Set partitions $\bI=\{I_1,\ldots,I_k\}$ of $[n]$ are denoted by $\bI=(I_{a_1},\ldots,I_{a_k})$, where $\min(I_{a_1})<\cdots<\min(I_{a_k})$. These set partitions are usually represented by \emph{arc diagrams}, which connect, in a transitive manner, the elements of $[n]$ that belong to the same block. See Figure \ref{029}. Also, we set $|\bI|=k$ and $\|\bI\|=(|I_1|,\ldots,|I_k|)$. For $\bI\in\P_n$ and $i,j\in[n]$, we will denote by $i\sim_\bI j$ the fact that $i,j$ belong to the same block of $\bI$.
\begin{figure}[H]
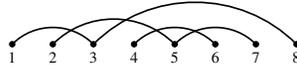
\figftn\caption{Set partition $(\{1,3,8\},\{2,5,7\},\{4,6\})$ of $[8]$.}\label{029}\end{figure}

Given $\bI,\bJ\in\P(A)$, we say that $\bI$ is \emph{finer} than $\bJ$, or that $\bJ$ is \emph{coarser} than $\bI$, denoted by $\bI\preceq\bJ$, if each block of $\bJ$ is a union of blocks of $\bI$.  This relation gives to $\P(A)$ a structure of lattice with supremum operation $\vee$. The pair $(\P_n,\vee)$ is called \emph{the monoid of set partitions} of $[n]$. It was shown in \cite[Theorem 2]{Fi03} that $\P_n$ can be presented by $\frac{n(n-1)}{2}$ \cite[A000217]{OEIS} generators $e_{i,j}$ with $i<j$, subject to the following relations:\begin{equation}\label{047}e_{i,j}^2=e_{i,j},\quad e_{i,j}e_{r,s}=e_{r,s}e_{i,j},\quad e_{i,j}e_{i,k}=e_{i,j}e_{j,k}=e_{i,k}e_{j,k},\end{equation}where each $e_{i,j}$ is the set partition of $[n]$ whose unique block that is not a singleton is $\{i,j\}$. See Figure \ref{005}.
\begin{figure}[H]
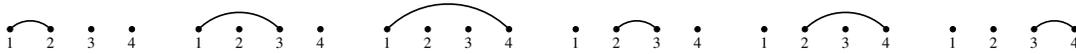
\figtwe\caption{Generators $e_{i,j}$ of $\P_4$.}\label{005}\end{figure}

Given $\bI\in\P(A)$ and a subset $X$ of $A$, we set $\bI\cap X=\{B\cap X\mid B\in\bI\}\backslash\{\emptyset\}$ in $\P(X)$.

Observe that if $X$ is a subset of $A$ and $\bI\in\P(X)$, then $\bI$ can be naturally regarded as a set partition of $A$ by inserting to it the singletons $\{a\}$ with $a\in A\backslash X$. Now, if $\bI\in\P(A)$ and $\bJ\in\P(B)$ for some set $B$, we will simply denote by $\bI\vee\bJ$ the product of $\bI$ with $\bJ$ regarded as elements of $\P(A\cup B)$.

A \emph{ramified partition} of $A$ is a pair $(\bI,\bJ)$ of set partitions of $A$ such that $\bI\preceq\bJ$. Observe that $(\bI,\bJ)$ can be regarded as a set partition of $\bI$, in which two blocks of $\bI$ belong to the same block of $(\bI,\bJ)$ whenever both are contained in the same block of $\bJ$. The collection of ramified partitions of $A$ is denoted by $\RP(A)$, and $\RP([n])$ is denoted by $\RP_n$. We call $\RP(A)$, together with the product $(\bI,\bJ)\vee(\bH,\bK)=(\bI\vee\bH,\bJ\vee\bK)$, the \emph{monoid of ramified partitions} of $A$.

\subsection{The partition and the ramified monoids}\label{101}

Here, we will denote by $\CC_n$ instead of $\P_{2n}$ the collection of set partitions of $[2n]$. Set partitions in $\CC_n$ will be represented by \emph{strand diagrams}, which are obtained from arc diagrams, by locating on top the elements of $[n]$ and at bottom the elements of $[n+1,2n]$ renumbered from $1$ to $n$. See Figure \ref{030}.
\begin{figure}[H]\figfif\caption{Set partition $(\{1,3,8\},\{2,5,7\},\{4,6\})$ of $[8]$.}\label{030}\end{figure}
For $\bI,\bJ\in\CC_n$, the \emph{concatenation} $\bI*\bJ$ of $\bI$ with $\bJ$, is the set partition obtained by identifying the bottom elements of $\bI$ with the top ones of $\bJ$ and so removing them. More specifically, if $X=\{x_1,\ldots,x_n\}$ is a set that contains no elements of $[2n]$, then $\bI*\bJ=(\bI_X\vee\bJ^X)\cap[2n]$, where $\bI_X$ is obtained from $\bI$ by replacing each $n+i$ by $x_i$ and $\bJ^X$ is obtained from $\bJ$ by replacing each $i$ by $x_i$. See Figure \ref{031}.
\begin{figure}[H]\figstn\caption{Set partition $(\{1,3,8\},\{2,5,7\},\{4,6\})$ of $[8]$.}\label{031}\end{figure}
The collection $\CC_n$ together with the \emph{concatenation product} is called the \emph{partition monoid} \cite{Jn94,Mr90,Mr94}. The collection of set partitions of $[2n]$ whose blocks contain exactly two elements forms a submonoid $\Br_n$ of $\CC_n$, called the \emph{Brauer monoid} \cite{Br37}, which is presented by generators $s_1,\ldots,s_{n-1}$ and $t_1,\ldots,t_{n-1}$, subject to the relations \cite[Theorem 3.1]{KuMa06}:\begin{gather}
s_i^2=1;\qquad s_is_js_i=s_js_is_j,\quad|i-j|=1;\qquad s_is_j=s_js_i,\quad|i-j|>1;\label{032}\\
t_i^2=t_i;\qquad t_it_jt_i=t_i,\quad|i-j|=1;\qquad t_it_j=t_jt_i,\quad|i-j|>1;\label{033}\\
s_it_jt_i=s_jt_i,\quad t_it_js_i=t_is_j,\quad|i-j|=1;\qquad t_is_i=s_it_i=t_i;\qquad t_is_j=s_jt_i,\quad|i-j|>1.\label{034}
\end{gather}
The submonoid of $\Br_n$ presented by generators $s_1,\ldots,s_{n-1}$ and relations in \eqref{032} corresponds to the symmetric group $\S_n$ of permutations of $[n]$, and coincides with the group of units of $\CC_n$. The submonoid $\J_n$ of $\Br_n$ presented by generators $t_1,\ldots,t_n$ and relations in \eqref{033} is called the \emph{Jones monoid}. It is well known that $|\S_n|$ is the $n$th factorial number $n!$ \cite[A000142]{OEIS}, and that $|\J_n|$ is the $n$th \emph{Catalan number} $\c_n$ \cite[A000108]{OEIS}.

For every submonoid $M$ of $\CC_n$, we denote by $\R(M)$ the collection of ramified partitions $(\bI,\bJ)$ of $[2n]$ such that $\bI\in M$. Ramified partitions $(\bI,\bJ)$ in $\R(\CC_n)$ are represented by \emph{diagrams of ties}, which are obtained by connecting by \emph{ties} the blocks in the strand diagram of $\bI$ that are contained in the same block of $\bJ$. See Figure \ref{035}. The collection $\R(M)$ together with the product $(\bI,\bJ)*(\bH,\bK)=(\bI*\bJ,\bH*\bK)$ is called the \emph{ramified monoid} of $M$. Observe that $M$ embeds inside $\R(M)$ through the identification $\bI\mapsto(\bI,\bI)$.\begin{figure}[H]\figtwofou\caption{Ramified partition $\left((\{1,3\},\{2\},\{4,6,8\},\{5\},\{7\}),(\{1,3,4,6,8\},\{2,5\},\{7\})\right)$ in $\R(\CC_4)$.}\label{035}\end{figure}

As mentioned in \cite[Proposition 2]{PreAiArJu22}, the monoid $\R(\{1\})$ is isomorphic to $\P_n$ via the identification of $e_{i,j}$ with the ramified partition obtained by connecting by a tie the $i$th strand with the $j$th strand of the identity of $\CC_n$. See Figure \ref{004}. This implies that every ramified monoid contains $\P_n$ as a submonoid. We will use the same notation to denote $e_{i,j}$ as an element of $\R(\{1\})$.
\begin{figure}[H]
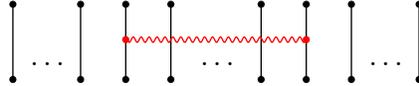
\figthi\caption{Generator $e_{i,j}$.}\label{004}\end{figure}

\subsection{Cellular algebras}\label{102}

In this subsection we recall briefly the concept of a \emph{cellular algebra}, introduced by Graham and Lerher in \cite{GrLe04}. Later, we recall the \emph{Murphy's standard basis} of the Iwahori--Hecke algebra, which results to be a cellular basis. We will use this basis as an inspiration to construct cellular bases for the two algebras that we study in this paper: the \emph{tied--boxed Hecke algebra} and the \emph{tied--boxed Temperley--Lieb algebra}.

Let $R$ be an integral domain, and let $A$ to be an $R$--algebra which is free as an $R$--module. Consider $(\Lambda,
\geq)$ to be a poset in which for each $\lambda\in\Lambda$ there is a finite indexing set $T(\lambda)$ and elements $c_{\s\t}^\lambda\in A$ such that $B:=\{c_{\s\t}^\lambda\mid\lambda\in\Lambda\text{ and }\s,\t\in T(\lambda)\}$ is an $R$--basis of $A$. The pair $(B,\Lambda)$ is said to be a \emph{cellular basis} of $A$ if:
\begin{enumerate}
\renewcommand{\labelenumi}{\textbf{(\roman{enumi})}}
\item The $R$--linear map $*:A\to A$ given by $(\mathop{c_{\s\t}^\lambda})^*=c_{\t\s}^\lambda$ is an algebra
antiautomorphism of $A$.
\item For any $\lambda\in\Lambda,\;\t\in T(\lambda)$ and $a\in A$ there is $r_\V\in R$ such that for all $\s\in T(\lambda)$\[c_{\s\t}^\lambda a\equiv\sum_{\V\in T(\lambda)}r_\V c_{\s\V}^\lambda\mod A^\lambda,\]where the coefficient $r_\V\in R$ do not depend on $\t$, and $A^\lambda$ is the $R$--submodule of $A$ with basis:\[\{c_{\u\V}^\mu\mid\mu\in\Lambda\text{ such that }\mu>\lambda\text{ and }\u,\V\in T(\mu)\}.\]
\end{enumerate}
In this case, $A$ is said to be a \emph{cellular algebra}, and the tuple $(\Lambda,T,B,\ast)$ is called the \emph{cell datum} of it.

\subsubsection{The Iwahori--Hecke algebra}\label{103}

Recall that the \emph{braid group} $\B_n$ on $n$ strands \cite{Ar25} is the one presented by generators $\sigma_1,\ldots,\sigma_{n-1}$ subject to the well known \emph{braid relations}, that is:\begin{equation}\label{024}\sigma_i\sigma_j\sigma_i=\sigma_j\sigma_i\sigma_j,\quad|i-j|=1;\qquad\sigma_i\sigma_j=\sigma_j\sigma_i,\quad|i-j|>1.\end{equation}The \emph{braid monoid} $\B_n^+$ \cite{Ga69} is the submonoid of the braid group defined with the same presentation of $\B_n$.

The \emph{Iwahori--Hecke algebra} $H_n(q)$ is the $\qring$--algebra presented by generators $h_1,\ldots,h_{n-1}$ subject to the braid relations together with a \emph{quadratic relation}:\begin{gather}\label{094}
h_ih_jh_i=h_jh_ih_j,\quad|i-j|=1;\qquad h_ih_j=h_jh_i,\quad|i-j|>1;\qquad h_i^2=1+(q-q^{-1})h_i.\end{gather}Observe that $H_n(q)$ can be concerned as the quotient of the monoid algebra of $\B_n^+$ over $\qring$, by the two--sided ideal defined by the quadratic relation in \eqref{094}. Also, if $q=1$ it coincides with the group algebra of $\S_n$.

Due to the Matsumoto's theorem \cite{Mt64}, for each $w\in\S_n$ and each reduced expression $s_{i_1}\cdots s_{i_k}$ of it, the element $h_w:=h_{i_1}\cdots h_{i_k}$ is well defined in $H_n(q)$. Moreover, it is well known that $H_n(q)$ is a free $\qring$--module with $\qring$--basis $\{h_w\mid w\in\S_n\}$. Hence $\dim(H_n(q))=n!$. Another important $\qring$--basis for $H_n(q)$ is the \emph{Murphy's standard basis} $B_{H_n}$ \cite{Mur95}, which is indexed by pairs of standard tableaux:\begin{equation}\label{Murphybasis}
B_{H_n}=\{m_{\s\t}^\lambda:=h_{d(\s)}^\ast m_\lambda h_{d(\t)}\mid\lambda\in\Par_n\text{ and }\s,\t\in\Std(\lambda)\}.\end{equation}
where $\ast:H_n(q)\to H_n(q)$ is the antiautomorphism given by $h_w^{*}=h_{w^{-1}}$ and $m_\lambda:=\sum_{w\in\S_\lambda} q^{\ell(w)}h_w$.

In \cite{Mat99}, it was studied in detail the representation theory of $H_n(q)$ through its cellularity. Here, the Murphy's standard basis $B_{H_n}$ is used as a cellular basis for the Iwahori--Hecke algebra.

\subsubsection{The Temperley--Lieb algebra}\label{104}

For every integer $n\geq3$, the \emph{Temperley--Lieb algebra} $TL_{n}(q)$ is the $\qring$--algebra presented by generators $h_1,\ldots,h_{n-1}$ satisfying \eqref{094}, subject to the following additional relations:
\begin{align}
h_{i,j}=0;\quad\text{where}\quad h_{i,j}=1+qh_i+qh_j+q^2h_ih_j+q^2h_jh_i+q^3h_ih_jh_{i},\qquad|i-j|=1.
\end{align}
The elements $h_{i,j}$ are called the \emph{Steinberg elements}. Observe that $TL_n(q)$ is the quotient of $H_n(q)$ by the two--sided ideal generated by these elements. It is well know that $\dim(TL_n(q))=\c_n$, the $m$th \emph{Catalan number} \cite[A000108]{OEIS}.

It is worth to mention that there are several definitions of the Temperley--Lieb algebra \cite{TeLi71,Ka90,Jn87}. In our definition, we consider the following changes with respect to the \emph{H\"{a}rterich's presentation} \cite{Ha99}.
\[\begin{array}{c|c}\text{H\"{a}rterich}&\text{Actual}\\\hline q&{q^2}^{\textcolor{white}{1}}\\T_{s_i}&qh_i\end{array}\]

In \cite{Ha99}, H\"{a}rterich established a relation between the diagrammatical cellular basis of $TL_{n}(q)$ and the Murphy's cellular basis of $H_n(q)$ given in \eqref{Murphybasis}. Indeed, from this basis he obtained a new cellular basis $B_{TL_{n}}$ for $TL_{n}(q)$ by considering only the standard tableaux with at most two columns. More precisely,\begin{equation}\label{cellularbasismurphyTL}B_{TL_{n}}=\{m_{\s\t}\mid \lambda\in\Par_n^{\leq2}\text{ and }\s,\t\in\Std(\lambda)\},\end{equation}where $\Par_n^{\leq2}$ is the subset formed by the partitions in $\Par_n$ whose Young diagrams have at most two columns.

\subsubsection{The Young Hecke algebra and the Young Temperley--Lieb algebra}\label{095}

A $\blam$--multitableau $\bt$ is called \emph{of the initial kind} if the $i$th component of $\bt$ contains exactly the numbers of the $i$th component of $\bt^\blam$. For every $\blam\in\MPar_n$, we set $T(\blam)=\{\bt\in\Std(\blam)\mid\bt\text{ is of the initial kind}\}$.

For every $\mu=(\mu_1,\ldots,\mu_k)\in\C_n$, the \emph{Young Hecke algebra} $H_\mu$ (resp. \emph{Young Temperley--Lieb algebra} $TL_\mu$) of $\mu$, is the subalgebra of $H_n(q)$ (resp. $TL_n(q)$) generated by the simple transpositions of $\S_\mu$, that is\[H_\mu\simeq H_{\mu_1}\otimes\cdots\otimes H_{\mu_k},\qquad TL_\mu\simeq TL_{\mu_1}\otimes\cdots\otimes TL_{\mu_k}.\]

It is well known that the tensorial product of cellular algebras is cellular, see for example \cite[Section 3.2]{GeGo13}. Indeed, by using a similar argument as the one used in \cite[Section 3.4]{GeGo13}, we obtain that the elements of the Murphy's basis of $H_\mu$ are indexed by pairs of standard $\blam$--multitableaux of the initial kind, say $\bs=(\s_1,\ldots,\s_k)$ and $\bt=(\t_1,\ldots,\t_k)$, with the condition that $\comp(\blam)=\mu$. More precisely, if $\blam=(\lambda^{(1)},\ldots,\lambda^{(k)})$, then\[B_{H_\mu}=\{m_{\bs\bt}^\blam\mid\bs,\bt\in T(\blam),\,\blam\in\MPar_n\text{ and }\comp(\blam)=\mu\},\quad\text{where}\quad m_{\bs\bt}^\blam=m_{\s_1\t_1}^{\lambda^{(1)}}\cdots m_{\s_k\t_k}^{\lambda^{(k)}}.\]Similarly, the Murphy's basis of $TL_\mu$ is the set\[B_{TL_\mu}=\{m_{\bs\bt}^\blam\mid\bs,\bt\in T(\blam),\,\blam\in\MPar_n^{\leq2}\text{ and }\comp(\blam)=\mu\},\]where $\MPar_n^{\leq2}=\{(\lambda^{(1)},\ldots,\lambda^{(k)})\in\MPar_n\mid\lambda^{(i)}\in\Par_n^{\leq2}\text{ for all }i\in[k]\}$.

\section{The algebra of braids and ties}\label{105}

The \emph{algebra of braids and ties} \cite{AiJu00}, or simply the \emph{bt--algebra}, is the $\qring$--algebra $\E_n(q)$ presented by generators $e_1,\ldots,e_{n-1}$ and $g_1,\ldots,g_{n-1}$, subject to the following relations:\begin{gather}
e_i^2=e_i;\qquad e_ie_j=e_je_i;\label{023}\\
g_ig_jg_i=g_jg_ig_j,\quad|i-j|=1;\qquad g_ig_j=g_jg_i,\quad|i-j|>1;\label{B2}\\
g_ie_i=e_ig_i;\qquad g_ie_j=e_jg_i,\quad|i-j|>1;\label{B4}\\
e_ig_jg_i=g_jg_ie_j,\quad e_ie_jg_j=e_ig_je_i=g_je_ie_j,\quad|i-j|=1;\label{B5}\\
g_i^2=1+(q-q^{-1})e_ig_i.\label{B6}
\end{gather}
Observe that, due to \eqref{B6}, each $g_i$ is invertible with inverse $g_i^{-1}=g_i-(q-q^{-1})e_i$.

\subsection{Representation theory}\label{107}

For $n,r$ positive integers satisfying $n\leq r$ and $V$ to be the free $\qring$--module with basis $\{v_{i}^a\mid i\in[n],a\in[r]\}$, we define the operators $\bE,\bG\in\End_S(V^{\otimes2})$ as follows:
\begin{equation}\label{opE}
(v_i^a\otimes v_j^b)\bE=\begin{cases}0&\text{if }a\neq b,\\v_i^a\otimes v_j^b&\text{if }a=b,\end{cases}\end{equation}and\begin{equation}\label{opG}
(v_i^a\otimes v_j^b)\bG=\begin{cases}v_i^a\otimes v_j^b&\text{if }a\neq b,\\
qv_i^a\otimes v_j^b&\text{if }a=b,\;i=j,\\v_j^a\otimes v_i^b&\text{if }a=b,\;i>j,\\
(q-q^{-1})v_i^a\otimes v_j^b+v_j^b\otimes v_i^a&\text{if }a=b,\;i<j.\end{cases}
\end{equation}
These ones can be extended to operators $\bE_i$ and $\bG_i$ acting on the tensor space $V^{\otimes n}$ by letting $\bE$ and $\bG$ act on the $i$th and $(i+1)$th factors.

\begin{thm}[{\cite[Corollary 4]{RyH11}}]
The map $\rho:\E_n\to\End_S(V^{\otimes n})$ given by $g_i\mapsto\bG_i$ and $e_i\mapsto\bE_i$ defines a faithful representation of $\E_n$.
\end{thm}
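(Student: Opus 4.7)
My approach is to prove the theorem in two stages: first verify that the assignment $\rho$ respects each defining relation of $\E_n(q)$, so that it extends to a well-defined algebra homomorphism; second, establish injectivity by exhibiting a spanning set of $\E_n(q)$ of cardinality $n!\b_n$ and showing that $\rho$ carries it to linearly independent operators in $\End(V^{\otimes n})$.

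For the first stage, the decisive observation is that $\bG$ acts as the identity on the submodule spanned by basis tensors with differing upper indices ($a\ne b$) and as the standard Iwahori--Hecke generator on the submodule where the upper indices coincide, while $\bE$ is exactly the projection onto the latter subspace. The idempotent relations \eqref{023} are immediate from \eqref{opE}, and the commutation of distant $\bE_i$'s and $\bG_i$'s follows because they act on disjoint tensor factors. For adjacent $\bE_i$, $\bE_{i+1}$, both $\bE_i\bE_{i+1}$ and $\bE_{i+1}\bE_i$ project onto the subspace where positions $i$, $i+1$, $i+2$ carry identical upper labels. The braid relation \eqref{B2} for adjacent generators reduces, on each isotypic component indexed by the tuple of upper indices, either to the identity (when not all three upper labels agree) or to the known Hecke braid identity in $H_3(q)$. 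The quadratic relation \eqref{B6} follows by case analysis on the four cases of \eqref{opG}, using that $\bE\bG$ vanishes on the $a\ne b$ part and agrees with the Hecke generator otherwise. The mixed relations \eqref{B4}--\eqref{B5} are dispatched similarly by decomposing $V^{\otimes 3}$ according to the triple of upper indices and exploiting the fact that $\bE_i\bE_j$ forces all three upper labels to coincide.

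For the second stage, for each $\bI\in\P_n$ let $E_\bI=\prod e_{i,j}$, taken over the pairs with $i\sim_\bI j$; this element is well defined in $\E_n(q)$ since the generators $e_{i,j}$ (defined in the obvious way using the braid generators, cf.\ Figure \ref{004}) inherit the relations \eqref{047} from \eqref{023}, \eqref{B4} and \eqref{B5}. Using \eqref{B5} one can push every $e_i$ in a word past every $g_j$ at the cost of producing further idempotents, and combined with a Matsumoto-type choice of reduced word for each $w\in\S_n$ this yields the candidate spanning set $\{E_\bI g_w\mid\bI\in\P_n,\,w\in\S_n\}$ of size $|\P_n|\cdot|\S_n|=n!\b_n$. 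To establish linear independence of their images, I plan to pick for each $(\bI,w)$ a test tensor $v_{i_1}^{a_1}\otimes\cdots\otimes v_{i_n}^{a_n}$ whose upper-index pattern exactly realizes the partition $\bI$ (so $a_k=a_\ell$ if and only if $k\sim_\bI\ell$) and whose lower indices are strictly increasing, so that $\rho(g_w)$ produces a leading term uniquely determined by $w$. A triangularity argument, ordering pairs $(\bI,w)$ lexicographically by the refinement order on $\P_n$ followed by the Bruhat order on $\S_n$, will then separate their images and simultaneously prove that the spanning set is a basis of $\E_n(q)$.

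The main obstacle I anticipate is the spanning half of the second stage: the inductive rewriting of an arbitrary monomial in the $e_i$'s and $g_i$'s into the normal form $E_\bI g_w$ requires a careful bookkeeping of how idempotents propagate through braided generators, with the identity $e_ig_je_i=e_ig_ie_j$ from \eqref{B5} being the key move. Once the spanning is in place, the extraction of leading terms is routine but still demands a uniform choice of test vectors compatible with the chosen ordering on $\P_n\times\S_n$, so that the matrix expressing $\rho(E_\bI g_w)$ on these vectors is upper unitriangular. By contrast, the well-definedness stage, while lengthy, is entirely mechanical and reduces case by case to identities already valid in the Iwahori--Hecke algebra.
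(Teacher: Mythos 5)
The paper does not prove this statement at all --- it is quoted from \cite{RyH11}, and your two--stage architecture (verify the defining relations, then send the spanning set $\{E_\bI g_w\mid\bI\in\P_n,\,w\in\S_n\}$ to linearly independent operators via test tensors) is indeed the architecture of the cited proof. The genuine gap is in your first stage, and it sits exactly where the only real work is. Your claim that the braid relation \eqref{B2} reduces on each colour pattern ``either to the identity (when not all three upper labels agree) or to the known Hecke braid identity'' is false. Test the pattern $(a,a,b)$ with $a\neq b$: reading \eqref{opG} literally, so that $\bG$ fixes a tensor whose upper indices differ, $\bG_i$ acts as the Hecke operator $H$ on the first two factors while $\bG_{i+1}$ acts as the identity, hence $\bG_i\bG_{i+1}\bG_i$ acts as $H^2=1+(q-q^{-1})H$ while $\bG_{i+1}\bG_i\bG_{i+1}$ acts as $H$; the relation fails, so with that formula $\rho$ is not even a homomorphism. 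The formula in \eqref{opG} is a misprint: in \cite{RyH11} the operator acts in the case $a\neq b$ as the flip $v_i^a\otimes v_j^b\mapsto v_j^b\otimes v_i^a$. With the flip, the mixed--colour case of \eqref{B2} is not ``the identity'' but a nontrivial intertwining identity (the flips carry the Hecke operator from positions $i,i+1$ to $i+1,i+2$), and likewise the one--idempotent relation $e_ig_jg_i=g_jg_ie_j$ in \eqref{B5} is not disposed of by ``$\bE_i\bE_j$ forces all three labels to coincide'', since only one idempotent is present. So the part you describe as ``entirely mechanical'' is precisely the part your case analysis gets wrong, and an honest attempt at it would have exposed the misprinted action.

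The same misreading would sink your second stage: with the identity convention every $\bG_i$, hence every $\rho(g_w)$, fixes each basis tensor whose upper labels are pairwise distinct, so the nonzero element $\bbE_\bI(g_w-1)$ with $\bI=\{\{1\},\ldots,\{n\}\}$ lies in $\Ker\rho$ for every $w$, and no choice of ordering or test vectors can restore faithfulness. With the corrected flip action, your plan --- choose a tensor whose colour pattern realizes $\bI$ exactly and whose lower indices are distinct, note that $\rho(E_\bJ)$ kills it unless $\bJ\preceq\bI$, and extract a leading term indexed by $w$ via a Bruhat--order triangularity --- is essentially how linear independence of the images of the $E_\bI g_w$ is obtained in \cite{RyH11}, and the spanning argument you sketch (pushing idempotents past braid generators using \eqref{B5}) is also the standard one. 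So the skeleton is right, but the proof as proposed does not go through until the action of $\bG$ is corrected and the mixed--colour verifications are actually carried out.
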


Now, we recall some important elements of $\E_n(q)$.

Given $w\in\S_n$ and $s_{i_1}\cdots s_{i_k}$ a reduced expression of it, we set $g_w=g_{i_1}\cdots g_{i_k}$, and put $g_\id=1\in\qring$. From Matsumoto's theorem \cite{Mt64}, the element $g_w$ is independent on the choice of the reduced expression.

For $i,j\in[n-1]$, define $e_{i,j}$, recursively, as follows:
\begin{equation}\label{Es}e_{i,i+1}=e_i\quad\text{and}\quad e_{i,j}=g_i\cdots g_{j-1}e_jg_{j-1}^{-1}\cdots g_i^{-1}\quad\text{whenever}\quad j>i+1.
\end{equation}
For every subset $B=\{i_1<\cdots<i_r\}$ of $[n]$, define
\begin{equation}\label{idemE}E_B=\prod_{j=1}^{r-1}e_{i_j,\,i_{j+1}},\quad\text{where}\quad E_B=1\quad\text{if}\quad|B|=1.\end{equation}
Given a set partition $\bI=\{I_1,\ldots,I_k\}\in\P_n$, we set\begin{equation}\label{idemEA1}
E_\bI=\prod_{i=1}^kE_{I_i}.
\end{equation}
The elements $E_B$ and $E_\bI$ were originally introduced by Ryom--Hansen \cite{RyH11} in a slightly different manner, however, as it was mentioned in \cite{AiJu21}, both definitions are equivalent. We will use the definitions in \eqref{idemE} and \eqref{idemEA1} because they are more useful for our purposes.

With the above notations it was shown in \cite[Theorem 3]{RyH11} that the bt--algebra is a free $\qring$--module with basis $\{E_{\bf I}g_w\mid\bI\in\P_n,\,w\in\S_n\}$. Thus, the dimension of $\E_n(q)$ is $\b_nn!$. Furthermore, the elements $E_\bI$ and $g_w$ satisfy the relation $E_\bI g_w=g_wE_{\bI w}$, where $w$ acts on $\bI$ by permuting the elements of its blocks.

\subsubsection{Ideal decomposition of $\E_n(q)$}\label{108}

Here, we recall some results from \cite[Subsection 6.1]{EsRyH18}.

Let $\mu:\P_n\times\P_n\to\Z$ to be the \emph{M\"{o}ebius function} over the lattice $(\P_n,\preceq)$, defined in \cite[Section 3.2]{EsRyH18}.

Now, for every $\bI\in\P_n$, we define
\begin{equation}\label{idemEI}\bbE_\bI=\sum_{\bJ\succeq\bI}\mu(\bI,\bJ)E_\bJ.
\end{equation}
From this standard construction, due to Solomon \cite{So67}, we obtain a set of orthogonal idempotents $\{\bbE_{\bI}\mid \bI\in\P_n\}$ over $\E_n(q)$. Moreover, the elements $\bbE_\bI$ satisfy the following relations with the basis elements of $\E_n(q)$.\begin{equation}\label{propIdempotentesEI}\bbE_\bI g_w=g_w\bbE_{\bI w}\qquad\text{ and }\qquad\bbE_\bI\bE_\bJ=\begin{cases}\bbE_\bI&\text{if }\bJ\preceq\bI,\\
0&\text{otherwise.}\end{cases}
\end{equation}
In particular, we observe that $\{\bbE_\bI g_w\mid w\in\S_n,\;\bI\in\P_n\}$ is another $\qring$--basis for $\E_n(q)$.

For every $\alpha\in\Par_n$, we say that $\bI=(I_{a_1},\ldots,I_{a_k})\in\P_n$ is \emph{of type $\alpha$}, denoted by $|\bI|=\alpha$, if there is a permutation $w\in\S_n$ such that $(|I_{a_1 w}|,\ldots,|I_{a_k w}|)=\alpha$. Thus, we define the following element of $\E_n(q)$:
\begin{equation}\label{idempotenteEn}\bbE_\alpha\,=\sum_{\bI\in\P_n,\,|\bI|=\alpha}\bbE_\bI.\end{equation}
The set $\{\bbE_\alpha\mid\alpha\in\Par_n\}$ is a family of central orthogonal idempotents of $\E_n(q)$ which is also \emph{complete}, that is:\[\sum_{\alpha\in\Par_n}\bbE_\alpha=1.\]As a consequence we obtain that $\E_n(q)$ can be decomposed as a direct sum of two--side ideals:\begin{equation}\label{093}\E_n(q)=\bigoplus_{\alpha\in\Par_n}\E_n^\alpha(q),\quad\text{where}\quad\E_n^\alpha(q)=\bbE_\alpha\E_n(q).\end{equation}
Moreover, observe that the ideals $\E_n^\alpha(q)$ are also $\qring$--algebras with identities $\bbE_\alpha$ and $\qring$--bases:\[\{\bbE_\bI g_w\mid w\in\S_n,\,|\bI|=\alpha\}\]In particular, $\dim\E_n^\alpha(q)=\b_n(\alpha)n!$, where $\b_n(\alpha)$ is the number of set partition having type $\alpha$ \cite[A036040]{OEIS}.

\subsubsection{Isomorphism theorem for $\E_n(q)$}\label{109}

It was shown in \cite[Theorem 64]{EsRyH18} that $\E_n(q)$ is isomorphic to a direct sum of matrix algebras with coefficients over certain wreath algebras introduced in \cite{GeGo13}. The key point to establish this isomorphism is the construction of a cellular basis $\{m_{\bms\bmt}^\Lambda\}$, which is indexed by a \emph{special type} of pairs of multitableaux.

We first recall the more relevant elements in the construction of cell datum of $\E_n(q)$ given in \cite{EsRyH18}. For more details of this construction the reader should consult \cite[Section 6]{EsRyH18}.

The \textbf{poset}, denoted by $\L_n$, is the set of pairs of multipartitions $(\blam \mid \bmu)$, where $\blam=(\lambda^{(1)},\ldots,\lambda^{(k)})$ is an \emph{increasing multipartition} of $n=n_1+\cdots+n_k$ (see this definition at the beginning of \cite[Section 3.3]{EsRyH18}) with $\lambda^{(j)}\in\C_{n_j}$ for all $j\in[k]$, and $\bmu=(\mu^{(1)},\ldots, \mu^{(j)})$ with $\mu^{(i)}\in\Par_{m_i}$ and each $m_i$ being the multiplicities of equal $\lambda^{(i)}$'s in $\blam$. See Figure \ref{136}.
\begin{figure}[H]
\centering
$\left(\left(\yyng{1},\yyng{1},\yyng{1},\yyng{1,1},\yyng{2,1},\yyng{2,1},\yyng{3}\right)\;\bigg|\;\left(\yyng{2,1},\yyng{1},\yyng{2},\yyng{1}\right)\right).$
\caption{An element of $\L_{14}$.}
\label{136}
\end{figure}

In what follows $\Lambda=(\blam\mid\bmu)\in\L_n$ with $\blam=(\lambda^{(1)},\ldots,\lambda^{(k)})$ and $\bmu=(\mu^{(1)},\ldots,\mu^{(j)})$ as above.

We define the \emph{linear set partition of $[n]$ associated to $\blam$} as follows. First, to each component $\lambda^{(j)}$ of $\blam$, we fix a subset $I_j$ of $[n]$ defined as follows:
\[I_j=\begin{cases}
[1,n_1]&\text{if } j=1,\\
[n_1+\cdots+n_{j-1}+1,n_1+\cdots+n_j]&\text{if } j>1.
\end{cases}\]
Then, the linear set partition associated to $\blam$ is given by $\bI_\blam=\{I_1,\ldots,I_k\}$. For example, the linear set partition associated to the multicomposition $\blam=((2,1),(3,2,1),(1,1,1))$ is $\bI_{\blam}=\{[1,3],[4,9],[10,12]\}$.

Let $\S_\Lambda$ the stabilizer subgroup of the set partition $\bI_{\blam}$. We denote by $\S_\Lambda^m$ the subgroup of $\S_\Lambda$ consisting of the order preserving permutations of the blocks of $\bI_\blam$ that correspond to equal $\lambda^{(i)}$'s, that is\[\S_\Lambda^m\simeq\S_{m_1}\times\cdots\times\S_{m_r}.\]Indeed, $\S_{\Lambda}^{m}$ is generated by the permutations $B_j\in \S_n$, of minimal length, that interchange the blocks $\bI_{j}$ and $\bI_{j+1}$ of $\bI_{\blam}$ whenever $\lambda^{(j)}=\lambda^{(j+1)}$. Moreover, we observe that $\S_\Lambda^m$ does not depend on $\bmu$ but on $\blam$.

For example, if $\blam=((1),(1),(1),(1,1,1),(1,1,1),(2,1),(2,1),(4))$, the following elements generate the subgroup $\S_\Lambda^m\leq\S_{20}$ with $\Lambda=(\blam\mid\underline{\quad}\,)$:\[B_1=s_1,\quad B_2=s_2,\quad B_3=(s_6s_5s_4)(s_7s_6s_5)(s_8s_7s_6)\quad\text{ and }\quad B_4=(s_{12}s_{11}s_{10})(s_{13}s_{12}s_{11})(s_{14}s_{13}s_{12}).\]

Now, suppose that $s_{i_1}\cdots s_{i_r}$ is a reduced expression for $B_i\in \S_{\Lambda}^{m}$.  Due to \cite[Lemma 46]{EsRyH18}, we have an embedding $\iota:\qring\S_{\Lambda}^{m}\hookrightarrow\E_n$ given by $\iota(B_i)=\bbB_i$, where $\bbB_i=\bbE_{\bI_{\blam}}g_{i_1}\cdots g_{i_r}$. Furthermore, if $w=s_{j_1}\cdots s_{j_k}$ is a reduced expression for $w$, we can define $\bbB_w=\bbB_{j_1}\cdots\bbB_{j_k}$, which does not depend on the choice of the reduced expression for $w$.

We say that $\bms$ $=(\bs\mid\bu)$ is a \emph{standard $\Lambda$--tableau} if the following conditions hold:
\begin{enumerate}
\item $\bs$ is a standard $\blam$--multitableau in the usual sense.
\item $\bs=(\s^{(1)},\ldots,\s^{(k)})$ is an \emph{increasing multitableau}. By increasing we here mean that $i<j$ if and only if $\min(\s^{(i)})<\min(\s^{(j)})$ whenever $\shape(\s^{(i)})=\shape(\s^{(j)})$, where $\min(\s)$ is the function that reads off the minimal entry of the tableau $\s$.
\item $\bu$ is a stardard $\bmu$--multitableau of the initial kind.
\end{enumerate}
We will denote by $\Std(\Lambda)$ the set of standard $\Lambda$--tableaux.

The Murphy element corresponding to $\Lambda$ is $m_{\Lambda}=\bbE_{\bI_{\blam}}x_{\blam}b_{\bmu}$, where $x_{\blam}=x_{\lambda^{(1)}}\cdots x_{\lambda^{(k)}}$ and $b_{\bmu}=b_{\mu^{(1)}}\cdots b_{\mu^{(j)}}$, whose factors we explain below.

The element $\bbE_{\bI_{\blam}}$ is the idempotent associated to the linear set partition $\bI_{\blam}$ in (\ref{idemEI}). If $H(\lambda^{(j)})$ denotes the Iwahori--Hecke algebra with linear basis $\{h_i\mid i\in \bI_\blam\}$, then the factor $x_{\lambda^{(i)}}$ can be seen as the image of the Murphy element of the Iwahori--Hecke algebra $H(\lambda^{(j)})$ through of the embedding $H(\lambda^{(j)})\hookrightarrow\E_n$ given by $h_i\mapsto g_i$. Similarly, the factor $b_{\mu^{(i)}}$ can be seen as the image of the $q=1$ specialization of the Murphy element $m_{\mu^{(i)}}\in H(\mu^{(j)})$ through of the embedding $\iota:\qring\S_\Lambda^m\hookrightarrow\E_n$ defined above.

For standard $\Lambda$--tableaux $\bms$ $=(\bs\mid\bu)$ and $\bmt$ $=(\bt\mid\bv)$, the elements of the cellular basis of $\E_n$ are given by:
\begin{equation}\label{elementosbasales}
\text{$m_{\bms\bmt}^{\Lambda}$}=\,g_{d(\bs)}^*\bbE_{\bI_{\blam}}x_{\blam}\bbB_{d(\bu)}^*b_{\bmu}\bbB_{d(\bv)}g_{d(\bt)},
\end{equation}where $\ast$ is the antiautomorphism given by $e_i^*=e_i$ and $g_i^*=g_i$.

\begin{thm}[{\cite[Theorem 57]{EsRyH18}}]
For each $\alpha\in\P_n$, the subalgebras $\E_n^{\alpha}(q)$ are cellular with cellular bases:\[\B_{\alpha}=\{m_{\bms\bmt}^{\Lambda} \mid \Lambda\in \mathcal{L}_n(\alpha),\, \bms,\bmt \in {\rm Std}(\Lambda)\},\]where $\mathcal{L}_n(\alpha)=\{(\blam\mid \bmu) \mid \comp(\blam)=\alpha\}$.

As an immediate consequence, we get that $\E_n(q)$ is cellular with cellular basis:\[\B_{\E_n}=\{m_{\bms\bmt}^{\Lambda} \mid \Lambda\in\L_n,\, \bms,\bmt \in \Std(\Lambda)\}.\]
\end{thm}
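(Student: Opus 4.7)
The plan is to prove cellularity of each ideal $\E_n^\alpha(q)$ separately and then deduce the cellularity of $\E_n(q)$ directly from the decomposition \eqref{093}. Fix $\alpha$ and equip $\mathcal{L}_n(\alpha)$ with the partial order refining the dominance order on the $\blam$-piece and, for fixed $\blam$, the dominance order on $\bmu$. I would verify the three Graham--Lehrer axioms for $(\B_\alpha, \mathcal{L}_n(\alpha))$ with this poset, using the corresponding cellular structures on the Hecke and symmetric-group factors as building blocks.

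First, I would establish that $\B_\alpha$ is a $\qring$-basis of $\E_n^\alpha(q)$. Since $\dim \E_n^\alpha(q) = \b_n(\alpha)\, n!$ by the discussion preceding \eqref{093}, it suffices to prove linear independence together with the cardinality identity $\sum_{\Lambda\in\mathcal{L}_n(\alpha)}|\Std(\Lambda)|^2 = \b_n(\alpha)\,n!$. The latter factors through the components of $\Lambda=(\blam\mid\bmu)$: a Robinson--Schensted argument on each Hecke factor of shape $\lambda^{(j)}$, combined with the analogous identity on the wreath-like factor $\S_\Lambda^m$ captured by $\bmu$, reassembles to exactly $\b_n(\alpha)\,n!$. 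Linear independence would then follow by exhibiting a unitriangular change of basis between $\B_\alpha$ and $\{\bbE_\bI g_w \mid |\bI|=\alpha,\,w\in\S_n\}$, obtained by expanding the Murphy-type factors $x_\blam$ and $b_\bmu$.

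Next, the antiautomorphism condition is routine: the antiautomorphism $*$ of $\E_n(q)$ defined by $g_i^* = g_i$ and $e_i^* = e_i$ fixes $\bbE_{\bI_\blam}$, $x_\blam$ and $b_\bmu$, and sends $\bbB_w$ to $\bbB_{w^{-1}}$, so \eqref{elementosbasales} yields $(m_{\bms\bmt}^\Lambda)^* = m_{\bmt\bms}^\Lambda$ immediately. The main work lies in the multiplication axiom: for each generator $g$ of $\E_n(q)$ I must show that $m_{\bms\bmt}^\Lambda\, g \equiv \sum_{\bmt'} r_{\bmt'}\, m_{\bms\bmt'}^\Lambda \pmod{A^\Lambda}$ with coefficients independent of $\bms$. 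The strategy is to commute $g$ past the idempotent $\bbE_{\bI_\blam}$ via \eqref{propIdempotentesEI}, which either preserves the block structure (reducing to multiplication inside the Hecke--Murphy cellular subalgebra or inside the cellular group algebra $\qring\S_\Lambda^m$) or refines it, producing higher-type terms that lie in $A^\Lambda$ because their block-type strictly dominates $|\bI_\blam|$.

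The main obstacle is the interaction between the two cellular pieces through $\bbE_{\bI_\blam}$ and the embedding $\iota$. Specifically, when $g = g_i$ and $s_i$ straddles two blocks of $\bI_\blam$, the identity $\bbE_{\bI_\blam}\, g_i = g_i\, \bbE_{\bI_\blam s_i}$ produces an idempotent orthogonal to $\bbE_{\bI_\blam}$, and reassembling the resulting cross-terms into elements of $A^\Lambda$ requires a careful bookkeeping of how the Murphy straightening on the Hecke side interacts with the symmetric-group straightening on the $\S_\Lambda^m$ side. Once the multiplication axiom is verified for each $\E_n^\alpha(q)$, the cellularity of $\E_n(q)$ is immediate: since the $\E_n^\alpha(q)$ are two-sided ideals pairwise annihilated by the complementary central idempotents, their cellular data assemble into a cellular datum for $\E_n(q)$ with poset $\L_n = \bigsqcup_\alpha \mathcal{L}_n(\alpha)$ and no cross-comparisons between different $\alpha$.
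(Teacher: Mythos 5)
You should note at the outset that the paper contains no proof of this statement: it is imported verbatim from \cite[Theorem 57]{EsRyH18}, so there is no in-paper argument to compare against. Judged on its own, your outline does reconstruct the broad architecture of that reference: the ideal decomposition \eqref{093}, the Murphy-type elements \eqref{elementosbasales}, a basis count plus unitriangular comparison with $\{\bbE_\bI g_w\mid |\bI|=\alpha,\, w\in\S_n\}$, verification of the Graham--Lehrer axioms for each $\E_n^\alpha(q)$, and the final assembly over $\alpha$, which is indeed immediate since a direct sum of cellular algebras is cellular with poset the disjoint union $\L_n=\bigsqcup_\alpha\mathcal{L}_n(\alpha)$.

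There is, however, a genuine flaw in your treatment of the multiplication axiom, which is the heart of the matter. You claim the error terms land in $A^\Lambda$ because multiplication by a generator may ``refine'' the block structure so that the ``block-type strictly dominates $|\bI_\blam|$''. That mechanism does not exist inside $\E_n^\alpha(q)$: every basis element there has block type exactly $\alpha$, and multiplication by generators never refines it --- by \eqref{propIdempotentesEI}, $\bbE_\bI e_j$ equals $\bbE_\bI$ or $0$, and $\bbE_\bI g_i=g_i\bbE_{\bI s_i}$ with $\bI s_i$ of the same type. The terms lying in $A^\Lambda$ arise instead from Murphy-type straightening inside the Hecke factors (the $x_\blam$ part) and inside the group algebra $\qring\S_\Lambda^m$ (the $b_\bmu$ part), and are controlled by the dominance order on the pairs $(\blam\mid\bmu)$, not by any coarsening or refinement of set partitions. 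Moreover, the one case you correctly flag as hard --- $g_i$ with $i,i+1$ in different blocks of $\bI_\blam d(\bt)$, where $d(\bt)s_i$ must be rewritten as a distinguished coset representative times an element of $\S_\Lambda$, and the latter split between the $x_\blam$- and $b_\bmu$-straightenings while keeping all coefficients independent of $\bms$ --- is precisely where the cited proof spends most of its effort; labelling it ``careful bookkeeping'' and deferring it leaves the argument incomplete. As written, your proposal is a plausible plan that matches the reference's strategy, but with the dominance mechanism misidentified and the decisive step unproved it does not yet constitute a proof.
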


Before state the main theorem of this subsection we need to recall some important ingredients.

Similarly to the definition of the subgroup $\S_\Lambda^m$ of $\S_\Lambda$, we define $\S_\Lambda^k$ as the subgroup of $\S_\Lambda$ consisting of the order preserving permutations of the blocks of $\bI_\blam$ that correspond to equal $|\lambda^{(i)}|$'s. Indeed, $\S_\Lambda^k$ is generated by the permutations $B_j\in\S_n$, of minimal length, that interchanges the blocks $\bI_j$ and $\bI_{j+i}$ of $\bI_\blam$ whenever $|\lambda^{(j)}|=|\lambda^{(j+1)}|$. In particular, we observe that $\S_\Lambda^m\leq\S_\Lambda^k\leq\S_\Lambda$ and the correspondence $B_j\mapsto\bbB_j$ defines an embedding of $\qring\S_\Lambda^k$ into $\E_n$.

For a $\Lambda$--multitableau $\bms$ $=(\bs\mid\bu)$, we say that $\bms$ (and $\bs$) is of \emph{wreath type} for $\Lambda$ if and only if there is a $\blam$--multitableau of the initial kind, $\bs_0$, and a permutation $B_w\in \S_\Lambda^k$ such that $\bs=\bs_0B_w$. It was shown in \cite{EsRyH18} that $d(\bs)=d(\bs_1)z_\bs$ for all standard $\Lambda$--tableau $\bms=(\bs\mid \bu)$, where $z_{\bs}$ is a coset representative of $\S_\Lambda$ in $\S_n$ and $\bs_1$ is the standard $\blam$--multitableau of wreath type associated to $\bs$. This implies that the elements $m_{\bms\bmt}$ of the cellular basis can be uniquely written as $m_{\bms\bmt}=g_{d(z_\bs)}^*m_{\bms_1\bmt_1}g_{d(z_{\bt})}$.

Multitableaux of wreath type were motivated by the combinatorial objects used by Geetha and Goodman in \cite{GeGo13} to define a cellular basis for the so--called \emph{wreath Hecke algebra}. Indeed, for each $\alpha=(k_1^{m_1},\ldots,k_r^{m_r})\in\P_n$ such that $k_1>\cdots>k_r$, the \emph{wreath Hecke algebra} is defined as follows:
\begin{equation}
H_\alpha^{wr}(q)=H_{k_1}(q)\wr\S_{m_1}\otimes\cdots\otimes H_{k_r}(q)\wr\S_{m_r}.
\end{equation}
It was shown in \cite{GeGo13} that $H_\alpha^{wr}(q)$ is a cellular algebra indexed by certain pairs of multitableaux, having an obvious correspondence with the cellular basis of the subalgebra $\E_n^\alpha(q)$ and the standard $\Lambda$--multitableaux of wreath type (see \cite[Remark 61]{EsRyH18}).

\begin{thm}[{\cite[Theorem 64]{EsRyH18}}] \label{isomorfismoconmatrices}
For each $\alpha\in \P_n$, the $\qring$--linear map\[\begin{array}{ccc}\Psi_\alpha:\E_n^\alpha(q)&\to&\Mat_{b_n(\alpha)}\big(\H^{wr}_{\alpha}(q)\big)\\[3mm]m_{\bms\bmt}&\mapsto&m_{\es_1\et_1}M_{\bs\bt}\end{array}\]is an isomorphism of $\qring$-algebras, where $M_{\bs\bt}$ denotes the elementary matrix of $\Mat_{b_n(\alpha)}\big(\H^{wr}_{\alpha}(q)\big)$, which is equal to $1$ at the intersection of the row and column indexed by $ z_{\bs}$ and $ z_{\bt}$, and $0$ otherwise.

As an immediate consequence we obtain the following decomposition for the bt--algebra:\[\E_n(q)\simeq\displaystyle\bigoplus_{\alpha\in \P_n}\Mat_{b_n(\alpha)}\big(\mathcal{H}^{wr}_{\alpha}(q)\big).\]
\end{thm}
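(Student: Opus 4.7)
The plan is to verify that $\Psi_\alpha$ is well defined on the cellular basis $\B_\alpha$, bijective, and multiplicative, and then transfer the conclusion to $\E_n(q)$ via the ideal decomposition \eqref{093}. As a preliminary sanity check, the dimensions match: if $\alpha=(k_1^{m_1},\ldots,k_r^{m_r})$, then $b_n(\alpha)=n!/\prod_i (k_i!)^{m_i} m_i!$ and $\dim\H^{wr}_\alpha(q)=\prod_i (k_i!)^{m_i} m_i!$, so $b_n(\alpha)^2\dim\H^{wr}_\alpha(q)=b_n(\alpha)\,n!=\dim\E_n^\alpha(q)$, confirming both sides can even be isomorphic.

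First, I would leverage the factorization $d(\bs)=d(\bs_1) z_\bs$ with $\bs_1$ of wreath type and $z_\bs$ a distinguished coset representative of $\S_\Lambda$ in $\S_n$, recalled above, to rewrite every cellular basis element uniquely as $m_{\bms\bmt}=g_{d(z_\bs)}^* m_{\bms_1\bmt_1} g_{d(z_\bt)}$. Under this parsing, $\bms_1,\bmt_1$ are wreath-type $\Lambda$-tableaux indexing the Geetha--Goodman cellular basis of $\H^{wr}_\alpha(q)$ via $\bms_1\mapsto\es_1$, while the pair $(z_\bs,z_\bt)$ ranges over the $b_n(\alpha)\times b_n(\alpha)$ matrix positions. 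This produces an explicit $\qring$-linear bijection between $\B_\alpha$ and the natural $\qring$-basis of $\Mat_{b_n(\alpha)}(\H^{wr}_\alpha(q))$ given by elementary matrices with Geetha--Goodman entries, so $\Psi_\alpha$ is a well-defined $\qring$-linear isomorphism of modules.

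The hard step is multiplicativity. For basis elements with common $\Lambda$,
\[
m_{\bms\bmt}\,m_{\bms'\bmt'}\,=\,g_{d(z_\bs)}^*\, m_{\bms_1\bmt_1}\,\bigl(g_{d(z_\bt)}g_{d(z_{\bs'})}^*\bigr)\,m_{\bms_1'\bmt_1'}\, g_{d(z_{\bt'})},
\]
and I would split into two cases. When $z_\bt=z_{\bs'}$, the middle factor simplifies to an element supported by $\S_\Lambda$, the inner factor $m_{\bms_1\bmt_1}m_{\bms_1'\bmt_1'}$ descends via the embedding $\iota:\qring\S_\Lambda^m\hookrightarrow\E_n(q)$ combined with the $H(\lambda^{(j)})\hookrightarrow\E_n(q)$ maps to exactly a Geetha--Goodman product, and the outer conjugation by $g_{d(z_\bs)}^*$ and $g_{d(z_{\bt'})}$ precisely reproduces the rule $M_{\bs\bt}M_{\bs'\bt'}=M_{\bs\bt'}$ for elementary matrices. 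When $z_\bt\neq z_{\bs'}$, the relation $\bbE_\bI g_w=g_w\bbE_{\bI w}$ must be used to push $g_{d(z_\bt)}g_{d(z_{\bs'})}^*$ past $\bbE_{\bI_\blam}$; because these coset representatives come from distinct cosets of $\S_\Lambda$ in $\S_n$, the transported idempotent is orthogonal to the one remaining on the other side by \eqref{propIdempotentesEI}, forcing the product to vanish modulo the ideal spanned by strictly higher cells, in agreement with $\delta_{\bt,\bs'}$. This orthogonality analysis is the main obstacle, requiring careful tracking of how $\bbE_{\bI_\blam}$ interacts with the images $\bbB_w$ of $\S_\Lambda^m$ and with the non-$\S_\Lambda$ parts of $\S_n$.

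Finally, for basis elements with different $\Lambda$, the cellularity property of $\B_\alpha$ forces the product to lie in the cellular ideal below, while on the matrix side the blocks indexed by distinct $\Lambda$ are separated by orthogonal central idempotents coming from the decomposition $\bbE_\alpha=\sum_\Lambda e_\Lambda$; matching these finishes multiplicativity. Combined with the bijection already established, this shows $\Psi_\alpha$ is an isomorphism of $\qring$-algebras, and summing over $\alpha\in\Par_n$ using \eqref{093} yields the displayed direct-sum decomposition of $\E_n(q)$.
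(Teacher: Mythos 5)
Note first that the paper does not prove this statement at all: it is imported verbatim from \cite[Theorem 64]{EsRyH18}, so there is no in-paper argument to compare yours against. Judged on its own terms, your overall route --- the dimension count $b_n(\alpha)^2\dim\H^{wr}_\alpha(q)=b_n(\alpha)\,n!$, the unique factorization $m_{\bms\bmt}=g_{d(z_\bs)}^*m_{\bms_1\bmt_1}g_{d(z_\bt)}$, sending the wreath--type part to the Geetha--Goodman basis and the coset representatives $(z_\bs,z_{\bt})$ to matrix positions --- is indeed the mechanism behind the cited result, and your treatment of the case $z_\bt\neq z_{\bs'}$ identifies the right tool (orthogonality of the transported idempotents via \eqref{propIdempotentesEI}), although you should claim \emph{exact} vanishing of the product rather than vanishing ``modulo strictly higher cells'': an algebra homomorphism must preserve products on the nose, and orthogonality $\bbE_\bJ\bbE_\bK=0$ for $\bJ\neq\bK$ does give exact vanishing here.

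The genuine gap is your final case, products of basis elements with distinct cell labels $\Lambda\neq\Lambda'$. You dispose of it by asserting that ``the blocks indexed by distinct $\Lambda$ are separated by orthogonal central idempotents coming from the decomposition $\bbE_\alpha=\sum_\Lambda e_\Lambda$''. No such decomposition exists: $\bbE_\alpha=\sum_{|\bI|=\alpha}\bbE_\bI$ is indexed by set partitions of type $\alpha$, not by cell labels, and the individual $\bbE_\bI$ are not central. More to the point, cross--$\Lambda$ products do not vanish on either side, so there is nothing to separate: already for $\alpha=(n)$ one has $b_n(\alpha)=1$ and $\Mat_1\big(\H^{wr}_{(n)}(q)\big)=H_n(q)$, where Murphy products $m^{\lambda}_{\s\t}\,m^{\mu}_{\u\V}$ with $\lambda\neq\mu$ are generally nonzero. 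The case split by $\Lambda$ is therefore the wrong organizing principle; the only dichotomy the matrix structure sees is whether $z_\bt=z_{\bs'}$. When they differ, orthogonality kills the product exactly, uniformly in $\Lambda,\Lambda'$ (the relevant set partitions $\bI_\blam z_\bt$ and $\bI_{\blam'}z_{\bs'}$ are distinct). When they coincide, the product --- for equal \emph{or} different $\Lambda$ --- reduces to a computation inside the corner $\bbE_{\bI}\,\E_n^\alpha(q)\,\bbE_{\bI}$, and the substantive content you still owe is that this corner is isomorphic to $\H^{wr}_\alpha(q)$ as an algebra, carrying wreath--type basis elements to Geetha--Goodman basis elements multiplicatively; note also that $g_{d(z_\bt)}g_{d(z_\bt)}^*=g_{d(z_\bt)}g_{d(z_\bt)^{-1}}$ is not $1$ in $\E_n(q)$ because of the quadratic relation, so even the ``middle factor'' simplification in this case requires the interaction with the sandwiching idempotents, which your sketch only gestures at.
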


\subsection{The ramified symmetric monoid}\label{106}

Here, we recall the ramified monoid of the symmetric group and study its center. It is important to mention that, when $q=1$, the algebra of braids and ties will be the monoid algebra generated by this monoid.

The ramified monoid of $\S_n$ is the semidirect product $\R(\S_n)=\P_n\rtimes\S_n$ \cite[Theorem 19]{AiArJu23}, presented by generators $s_1,\ldots,s_{n-1}$ satisfying \eqref{032}, and $e_{i,j}$ with $i<j$ satisfying \eqref{047}, both subject to the relations:\begin{equation}\label{048}s_i\,e_{j,k}=e_{s_i(j),\,s_i(k)}\,s_i.\end{equation}However, by defining $e_i=e_{i,i+1}$ for all $i\in[n-1]$, and due to \eqref{048}, we obtain the following. See Figure \ref{046}.\begin{equation}\label{045}e_{i,j}=s_i\cdots s_{j-2}e_{j-1}s_{j-2}\cdots s_i=s_{j-1}\cdots s_{i+1}e_is_{i+1}\cdots s_{j-1}.\end{equation}Thus, the ramified monoid $\R(\S_n)$ can be presented by the generators $s_1,\ldots,s_{n-1}$ satisfying \eqref{032}, and the generators $e_1,\ldots,e_{n-1}$ satisfying \eqref{023}, subject to the following relations:\begin{gather}
e_is_js_i=s_js_ie_j,\quad e_ie_js_i=e_js_ie_j=s_ie_je_i,\quad|i-j|=1;\qquad s_ie_j=e_js_i,\quad|i-j|\neq1.\label{049}
\end{gather}
\begin{figure}[H]
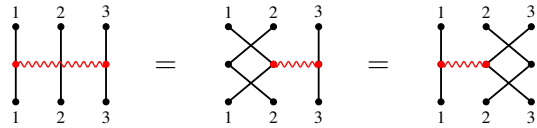
\figntn\caption{Generator $e_{1,3}$.}\label{046}\end{figure}

As it was mentioned above, for $q=1$, the algebra $\E_n(1)$ coincides with the monoid algebra $\qring[\R(\S_n)]$. This is the reason we use the same symbols to denote both the abstract generators of $\E_n(q)$ and the set partitions $e_{i,i+1}$ that generate $\R(\S_n)$, both satisfying \eqref{023}.

\begin{pro}\label{088}
We have $Z(\R(\S_n))=\langle e_1\cdots e_{n-1}\rangle$ for all $n\geq3$.
\end{pro}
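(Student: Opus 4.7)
The plan is to exploit the semidirect product decomposition $\R(\S_n)=\P_n\rtimes\S_n$ and analyze centrality componentwise. An element $(\bI,w)\in\R(\S_n)$ commutes with a general $(\bJ,v)$ if and only if the two identities
\[
\bI\vee w(\bJ)=\bJ\vee v(\bI),\qquad wv=vw
\]
hold for all $(\bJ,v)\in\R(\S_n)$, where $v(\bI)$ denotes the action of $\S_n$ on $\P_n$ by permuting block elements (cf.\ relation \eqref{048}, which encodes this action).

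First, the $\S_n$-component forces $w\in Z(\S_n)$. Since $n\geq 3$, this gives $w=\id$, so every central element has the form $(\bI,\id)$. Second, specializing the first identity to $\bJ=\id_\P$ (the trivial partition of all singletons) yields $\bI=v(\bI)$ for every $v\in\S_n$; conversely, once $\bI$ is $\S_n$-invariant the identity reduces to the trivial $\bI\vee\bJ=\bJ\vee\bI$. Therefore the central elements are precisely the pairs $(\bI,\id)$ with $\bI$ an $\S_n$-invariant set partition.

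The next step is to classify $\S_n$-invariant elements of $\P_n$. The condition $v(\bI)=\bI$ for all $v\in\S_n$ is equivalent to saying that the equivalence relation $\sim_\bI$ is preserved by every permutation of $[n]$; this forces the relation to be either the diagonal or the total relation, so the only invariant set partitions are the trivial one $\{\{1\},\ldots,\{n\}\}$ and the one-block partition $\{[n]\}$. Denoting by $\omega$ the element of $\R(\S_n)$ corresponding to $(\{[n]\},\id)$, we then have $Z(\R(\S_n))=\{1,\omega\}$.

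Finally, I would identify $\omega$ with $e_1\cdots e_{n-1}$: using the definition \eqref{047} of the generators $e_{i,j}$, the product $e_1\cdots e_{n-1}$ is exactly the set partition whose unique nontrivial block is $[n]$, paired with the identity permutation. Since $\omega$ is idempotent ($\omega^2=(\{[n]\}\vee\{[n]\},\id)=\omega$), the submonoid it generates is $\langle\omega\rangle=\{1,\omega\}$, matching the center computed above. No step presents a serious obstacle; the only point requiring a small argument is the classification of $\S_n$-invariant partitions, where one must be careful to note that the hypothesis $n\geq 3$ is used to kill $Z(\S_n)$ but the invariance classification itself already works for $n\geq 2$.
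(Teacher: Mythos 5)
Your proof is correct and follows essentially the same route as the paper: it uses the semidirect product decomposition $\R(\S_n)=\P_n\rtimes\S_n$, kills the $\S_n$--component via $Z(\S_n)=\{1\}$ for $n\geq3$, and then reduces to classifying the $\S_n$--invariant set partitions, which the paper handles by citing relation \eqref{048}. Your explicit classification of the invariant partitions and the observation that the one--block partition equals $e_1\cdots e_{n-1}$ simply spell out the step the paper leaves terse.
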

\begin{proof}
Let $z=se\in Z(\R(\S_n))$ with $s\in\S_n$ and $e\in\P_n$. Then, for every $h=te'\in\R(\S_n)$ with $t\in\S_n$ and $e'\in\P_n$, we have $(st)t(e)e'=sete'=gh=hg=te'se=(ts)s(e')e$. This implies that $s\in Z(\S_n)=\{1\}$. Thus $g\in\P_n$. Since $g$ must commute, in particular, with every permutation in $\S_n$, then \eqref{048} implies that either $g=1$ or $g=e_1\cdots e_{n-1}$. Therefore $Z(\R(\S_n))=\{1,e_1\cdots e_{n-1}\}=\langle e_1\cdots e_{n-1}\rangle$.
\end{proof}
Observe that $Z(\R(\S_n))$ with $n\geq3$ is a monogenic free idempotent commutative monoid.

The \emph{tied braid monoid} $T\B_n$ \cite{AiJu16} is the one presented by generators $\sigma_1,\ldots,\sigma_{n-1}$ satisfying the braid relations in \eqref{024}, generators $\sigma_1^{-1},\ldots,\sigma_{n-1}^{-1}$, and generators $e_1,\ldots,e_{n-1}$ satisfying \eqref{023}, subject to the relations:\begin{gather}
\sigma_i\sigma_i^{-1}=1=\sigma_i^{-1}\sigma_i;\qquad\sigma_ie_j=e_j\sigma_i,\quad|i-j|\neq1;\label{040}\\
\sigma_i\sigma_je_i=e_j\sigma_i\sigma_j,\quad
\sigma_i\sigma_j^{-1}e_i=e_j\sigma_i\sigma_j^{-1},\quad
\sigma_ie_je_i=e_j\sigma_ie_j=e_ie_j\sigma_i,\quad|i-j|=1.\label{041}
\end{gather}Observe that $\E_n(q)$ is isomorphic to the quotient of the monoid algebra of $T\B_n$ by the relation \eqref{B6}. It was shown in \cite[Theorem 3]{AiJu21} that the tied braid monoid can be decomposed as $T\B_n=\P_n\rtimes\B_n$.

\section{Boxed ramified monoids}\label{012}

Here, we introduce \emph{boxed ramified monoids}, which are submonoids of the ramified monoids defined in Subsection \ref{101}. These ones are obtained by imposing linearity on the right set partitions and will serve as the bases for the algebras we will study later, providing them with a rich diagrammatic interpretation. As we will observe shortly, none of the nontrivial elements in boxed ramified monoids will possess inverses.

A set partition $\bI$ of $[n]$ is called \emph{linear} or \emph{convex} if each of its blocks is an interval. See Figure \ref{000}. As shown in \cite[Lemma 3, Lemma 4]{AiArJu23}, the collection $\LP_n$ of linear set partitions of $[n]$ is the submonoid of $\P_n$ with $2^{n-1}$ elements, which is presented by generators $e_1,\ldots,e_{n-1}$, subject to the relations in \eqref{023}.
\begin{figure}[H]
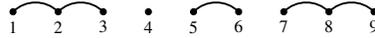
\figzer\caption{The convex set partition $([1,3],[4,4],[5,6],[7,9])=e_1e_2e_5e_7e_8$ of $[9]$.}\label{000}\end{figure}
Observe that the mapping $e_i\mapsto\|e_i\|$ defines an isomorphism of linear set partitions with compositions. Thus, the monoid $\C_n$ is generated by the compositions of length $n-1$ whose parts are all $1$ except one of them which is $2$. For example, the generators of $\C_5$ are $(2,1,1,1),(1,2,1,1),(1,1,2,1),(1,1,1,2)$.

A set partition $\bI\in\CC_n$ is called \emph{boxed} if $\bI\succeq1$ and $\bI\cap[n]$ is linear \cite[Definition 53]{AiArJu23}. See Figure \ref{001}. The collection $b\CC_n$ of boxed set partitions of $[2n]$ forms a submonoid of the well known \emph{monoid of uniform blocks permutations} \cite{Fi03,Ko00,Ko01}, which is presented by generators $b_1,\ldots,b_{n-1}$ subject to the relations $b_i^2=b_i$ and $b_ib_j=b_jb_i$ for all $i,j\in[n-1]$, where each $b_i$ is the set partition of $[2n]$ obtained by joining the $i$th block with the $(i+1)$th block of the identity of $\CC_n$. See Figure \ref{002} and Figure \ref{003}. Thus, the mapping $e_i\mapsto b_i$ defines an isomorphism of compositions with boxed set partitions. Therefore $\LP_n\simeq\C_n\simeq b\CC_n$.
\begin{figure}[H]
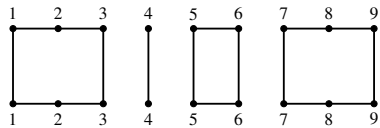
\figone\caption{The boxed set partition $b_1b_2b_5b_7b_8$ of $[18]$.}\label{001}\end{figure}
\begin{figure}[H]\figtwo\caption{Box generator $b_i$ en $\CC_n$.}\label{002}\end{figure}
\begin{figure}[H]
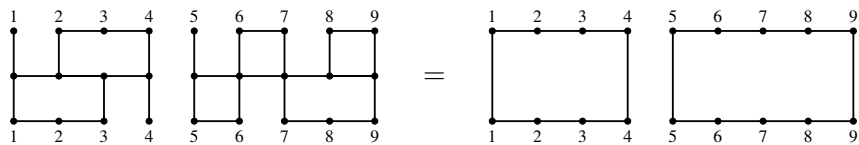
\figthr\caption{Product of boxed set partitions: $b_2b_3b_6b_8\cdot b_1b_2b_5b_7b_8=b_1b_2b_3b_5b_6b_7b_8$.}\label{003}\end{figure}

Since $\P_n=\R(\{1\})$, the submonoid $\C_n$ can be regarded as a submonoid of $\R(\CC_n)$ as well. For example, the element in Figure \ref{000} is represented in $\R(\CC_n)$ as in Figure \ref{001}.\begin{figure}[H]
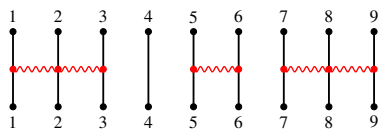
\figfou\caption{The element $e_1e_2e_5e_7e_8$ in $\C_9$.}\label{006}\end{figure}

For every nonnegative integers $r\leq s$, the \emph{$(r,s)$--shift} of a set partition $\bI\in\CC_n$ is the collection\[\bI[r,s]=\{B[r,s]\mid B\in\bI\},\quad\text{where}\quad B[r,s]=\{b+r\mid b\in B,\,b\leq n\}\cup\{b+s\mid b\in B,\,b>n\}.\]Note that coarsening is invariant under shifts, that is, $\bI\preceq\bJ$ if and only if $\bI[r,s]\preceq\bJ[r,s]$.

Given $(\bI,\bJ)\in\CC_m\times\CC_n$, the \emph{over product} of $\bI$ with $\bJ$ is the collection $\bI/\bJ=\bI[0,n]\cup\bJ[m,2m]$. Observe that $\bI/\bJ$ is a set partition of $[2(m+n)]$. See Figure \ref{009}.
\begin{figure}[H]
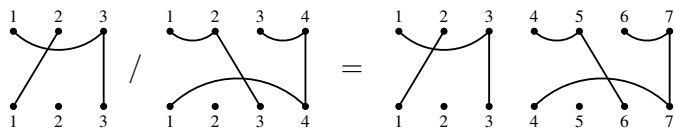
\figsev\caption{Over product of two set partitions.}\label{009}\end{figure}

We say that $\bI\in\CC_n$ is \emph{$r$--separable} for some $r\in[n-1]$ if there is $(\bJ,\bK)\in\CC_r\times\CC_{n-r}$ such that $\bI=\bJ/\bK$. Given $\mu=(\mu_1,\ldots,\mu_k)\in\C_n$, we say that $\bI$ is a \emph{$\mu$--partition} if it is $(\mu_1+\cdots+\mu_i)$--separable for all $i\in[k-1]$. See Figure \ref{010}. Note that if $\bI$ is a $\mu$--partition, then it is a $\mu'$--partition for all composition $\mu'\succeq\mu$. Observe that the over product $/$ defines a semigroup structure on $\CC=\bigsqcup_{m\geq1}\CC_m$. Thus, if $\bI$ is a $\mu$--partition with $\mu=(\mu_1,\ldots,\mu_k)$, then, for each $i\in[k]$, there are $\tilde{\bI}_i\in\CC_{m_i}$ with $m_i=\mu_1+\cdots+\mu_i$ such that $\bI=\tilde{\bI}_1/\cdots/\tilde{\bI}_k$. This decomposition is unique when $k$ is maximal and, in this case, it is called the \emph{boxed decomposition} of $\bI$.
\begin{figure}[H]
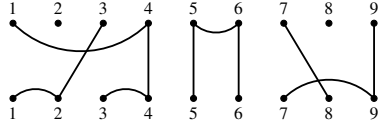
\figeig\caption{A $(4,2,3)$--partition of $[18]$.}\label{010}\end{figure}

For every $\mu\in\C_n$, there is a unique boxed set partition that is a $\mu$--partition and is not a $\mu'$--partition for all $\mu'\prec\mu$. This one is called the \emph{$u$--boxed partition} and is denoted by $b_\mu$. For example, the $(3,1,2,3)$--boxed partition of $[18]$ is the one in Figure \ref{001}. Conversely, every boxed set partition $\bI=(I_1,\ldots,I_k)$ is the $u$--boxed partition defined by $\mu=(\frac{|I_1|}{2},\ldots,\frac{|I_k|}{2})$. This defines the isomorphism of boxed set partitions with $\C_n$.

\begin{pro}\label{013}
For every composition $\mu=(\mu_1,\ldots,\mu_k)\in\C_n$, there are $\b_{2\mu_1}\cdots\b_{2\mu_k}$ $\mu$--partitions.
\end{pro}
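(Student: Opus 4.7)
The plan is to exhibit a bijection between the set of $\mu$--partitions in $\CC_n$ and the Cartesian product $\CC_{\mu_1}\times\cdots\times\CC_{\mu_k}$. Since $\CC_m=\P_{2m}$ has cardinality $\b_{2m}$, such a bijection immediately yields the count $\prod_{i=1}^k\b_{2\mu_i}$.

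The forward map is the iterated over product
\[
(\tilde{\bI}_1,\ldots,\tilde{\bI}_k)\longmapsto\tilde{\bI}_1/\cdots/\tilde{\bI}_k,
\]
which is well defined by the associativity of $/$ (already noted above, since $(\CC,/)$ is a semigroup). By regrouping as $\tilde{\bI}_1/\cdots/\tilde{\bI}_k=(\tilde{\bI}_1/\cdots/\tilde{\bI}_i)/(\tilde{\bI}_{i+1}/\cdots/\tilde{\bI}_k)$, the image is $(\mu_1+\cdots+\mu_i)$--separable for every $i\in[k-1]$, hence a $\mu$--partition.

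For the inverse, I would proceed by induction on $k$. Existence of the decomposition follows by unravelling the definition: since $\bI$ is $\mu_1$--separable, there exist $\tilde{\bI}_1\in\CC_{\mu_1}$ and $\bI'\in\CC_{n-\mu_1}$ with $\bI=\tilde{\bI}_1/\bI'$; the higher separability of $\bI$ at positions $\mu_1+\cdots+\mu_i$ translates directly into $\bI'$ being a $(\mu_2,\ldots,\mu_k)$--partition, so the inductive hypothesis applies to $\bI'$. Uniqueness reduces to checking that, whenever $\bI=\bJ/\bK$ with $\bJ\in\CC_r$ and $\bK\in\CC_{n-r}$, the pieces $\bJ$ and $\bK$ are recoverable from $\bI$: this is immediate from the explicit construction of the over product, since the blocks of $\bJ$ (resp.\ $\bK$) are exactly the blocks of $\bI$ whose elements lie in $[r]\cup[n+1,n+r]$ (resp.\ in $[r+1,n]\cup[n+r+1,2n]$).

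The only subtle point is the compatibility of the separability conditions under the inductive step, but this is a routine unfolding of the combinatorial meaning of $r$--separability (``no block crosses the wall at position $r$ on top or bottom''), which clearly passes to $\bI'$ once the initial $\mu_1$--block $\tilde{\bI}_1$ has been peeled off. I do not anticipate any real obstacle; the bulk of the work is just setting up the bijection carefully.
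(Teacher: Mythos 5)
Your proof is correct and is in essence the same argument as the paper's: the paper disposes of the statement in one line by observing that $\bI$ is a $\mu$--partition if and only if $\bI\preceq b_\mu$, so the count is the product of Bell numbers $\b_{2\mu_1}\cdots\b_{2\mu_k}$ attached to the blocks of $b_\mu$, and your over--product bijection with $\CC_{\mu_1}\times\cdots\times\CC_{\mu_k}$ is exactly the explicit, block--by--block form of that decomposition. The inductive details you add (recoverability of the factors of $/$ and the ``wall'' criterion for $r$--separability) are sound but do not constitute a different route.
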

\begin{proof}
It is a consequence of the fact that $\bI$ is a $\mu$--partition if and only if $\bI\preceq b_\mu$.
\end{proof}
See Figure \ref{011} for an example.
\begin{figure}[H]
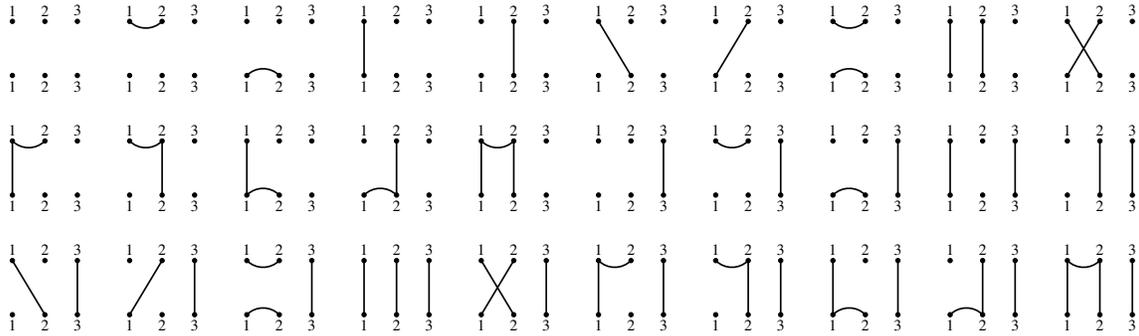
\fignin\caption{The $(2,1)$--partitions of $[6]$.}\label{011}\end{figure}

The \emph{boxed ramified monoid} of a submonoid $M$ of $\CC_n$ is the submonoid $\BR(M)$ of $\R(M)$ formed by the ramified partitions $(\bI,\bJ)$ such that $\bI\in M$ and $\bJ$ is boxed \cite[Cf. Definition 55]{AiArJu23}. See Figure \ref{008}.
\begin{figure}[H]
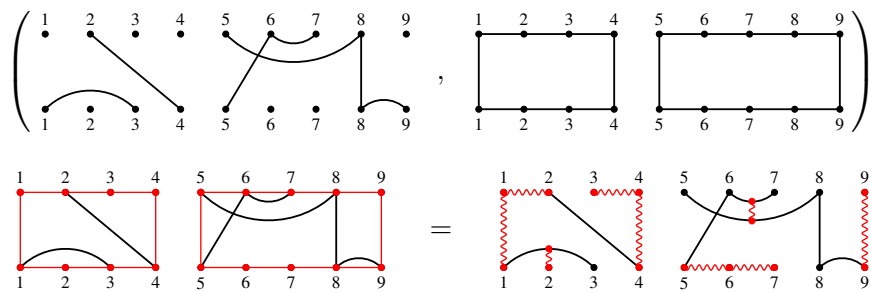
\figsix\caption{A ramified partition in $\BR(\CC_n)$.}\label{008}\end{figure}

\begin{rem}[Embedding]\label{043}
Observe that the map $\bI\mapsto(\bI,\bI)$ is an embedding of $M$ into $\BR(M)$ only if $M$ is formed by boxed set partitions. In particular, $b\CC_n$ embeds into $\BR(b\CC_n)$ via this identification.

On the other hand, as $b_i^2=b_i$ for all $i\in[n-1]$, then, for each $g,h\in\BR(M)$, we have $gh=1$ if and only if $g=h=1$. Therefore $s\BR(M):=\BR(M)\backslash\{1\}$ is a semigroup. Thus $\BR(M)\cap\S_n=\{1\}$, which implies that $s\BR(M)$ is a subsemigroup of the \emph{singular part} $s\R(M):=\R(M)\backslash\S_n$ of $\R(M)$.

Since $e_i=(1,b_i)$ for all $i\in[n-1]$, we have $\BR(M)\cap\P_n=\C_n$ for all submonoid $M$ of $\CC_n$. Now, consider the boxed set partition $b=b_1\cdots b_{n-1}$. Observe that $(1,b)=e_1\cdots e_{n-1}=:e$. Clearly, the map $\bI\mapsto(\bI,\bI)e=(\bI,b)$ defines a semigroup monomorphism, called \emph{boxed morphism}, from $M$ to $\BR(M)$. Moreover, the image of $M$ under the boxed morphism is a monoid with identity $e$. Indeed, since $b^2=b$, we have $(\bI,b)e=(\bI,b)(1,b)=(\bI,b)=(1,b)(\bI,b)=e(\bI,b)$ for all $\bI\in M$.

Observe that $\C_n\subseteq Z(\BR(M))$, indeed since $b\CC_n$ is commutative, then $(\bI,\bJ)e_i=(\bI,\bJ b_i)=(\bI,b_i\bJ)=e_i(\bI,\bJ)$ for all $(\bI,\bJ)\in\BR(M)$ and for all $i\in[n-1]$.
\end{rem}

\begin{rem}\label{021}
Proposition \ref{013} implies that for every boxed set partition $\bJ=(J_1,\ldots,J_k)$, there are $\b_{|J_1|}\cdots\b_{|J_k|}$ ramified partitions $(\bI,\bJ)$. Indeed, $\bI\preceq\bJ$ if and only if $\bI$ is a $(\frac{|J_1|}{2},\ldots,\frac{|J_k|}{2})$--partition. See Figure \ref{014}. Thus,\[|\BR(\CC_n)|=\sum_{(J_1,\ldots,J_k)}\b_{|J_1|}\cdots\b_{|J_k|}=\sum_{(\mu_1,\ldots,\mu_k)}\b_{2\mu_1}\cdots\b_{2\mu_k}=(2,19,271,5373,142923,4945661,\ldots),\]where $(J_1,\ldots,J_k)$ is a boxed set partition of $[2n]$, and $(\mu_1,\ldots,\mu_k)$ is a composition of $n$.
\end{rem}
\begin{figure}[H]
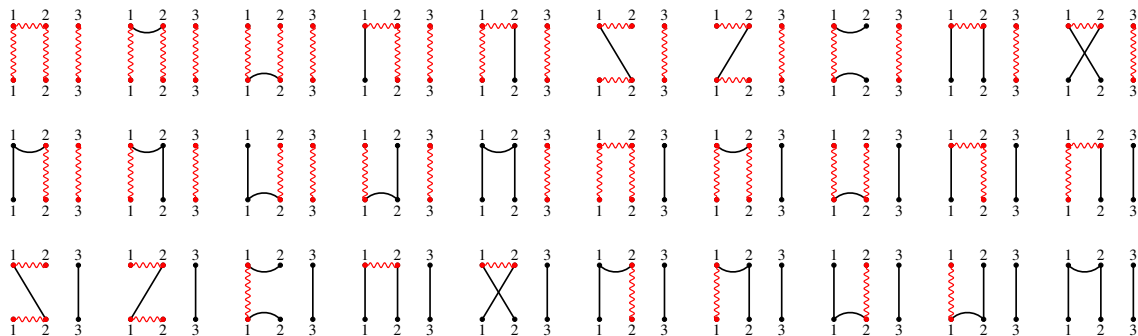
\figten\caption{The ramified partitions $(I,b_1)$ of $[6]$.}\label{014}\end{figure}

As coarsening is invariant under shifts, then $\bI\preceq\bJ$ and $\bH\preceq\bK$ implies $\bI/\bH\preceq\bJ/\bK$, hence the \emph{over product} given by $(\bI,\bJ)/(\bH,\bK)=(\bI/\bH,\bJ/\bK)$ defines a semigroup structure on $\R(\CC)=\bigsqcup_{m\geq1}\R(\CC_m)$.

\section{The tied--boxed Hecke algebra}\label{135}

In this section we introduce the \emph{tied--boxed Hecke algebra} by means generators and relations. Then, we study its representation theory and give a cellular basis for it (Subsection \ref{110}). Further, we show a diagrammatic realization through the boxed ramified monoid of the symmetric group (Subsection \ref{111}). Additionally, we study the singular part of the ramified symmetric monoid (Subsection \ref{112}).

For every positive integer $n$, the \emph{tied--boxed Hecke algebra} $bH_n(q)$ is the $\qring$--algebra presented by generators $e_1,\ldots,e_{n-1}$ satisfying \eqref{023}, and generators $z_1,\ldots,z_{n-1}$, subject to the following relations:
\begin{gather}
z_iz_jz_i=z_jz_iz_j,\quad|i-j|=1;\qquad z_iz_j=z_jz_i,\quad|i-j|>1;\label{h2}\\
e_iz_i=z_i;\qquad e_iz_j=z_je_i;\label{h3}\\
z_i^2=e_i+(q-q^{-1})z_i.\label{h4}
\end{gather}

\subsection{Representation theory}\label{110}

Here we study the representation theory of the tied--boxed Hecke algebra. In particular, we get the dimension of $bH_n(q)$ (Theorem \ref{130}) and show that it can be embedded into the algebra of braids and ties (Corollary \ref{126}). Furthermore, we give a cell datum for $bH_n(q)$ and show that it is cellular (Theorem \ref{131}).

\begin{pro}\label{123}
There is a homomorphism of $\qring$--algebras $\iota_1:bH_n(q)\to\E_n(q)$ satisfying $\iota_1(e_i)=e_i$ and $\iota_1(z_i)=e_ig_i$.
\end{pro}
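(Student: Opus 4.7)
The plan is to verify that the elements $e_i$ and $z_i:=e_ig_i$ of $\E_n(q)$ satisfy each of the defining relations of $bH_n(q)$; since $bH_n(q)$ is presented by generators and relations, this suffices to produce the homomorphism $\iota_1$ via the universal property. Relation \eqref{023} is inherited verbatim from $\E_n(q)$, so nothing needs to be checked there.

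For the easy relations, I would first verify the quadratic relation \eqref{h4}: using $e_ig_i=g_ie_i$ from \eqref{B4} and $e_i^2=e_i$, one has $z_i^2=e_ig_i^2$, and then \eqref{B6} together with $e_i^2=e_i$ gives the required $e_i+(q-q^{-1})z_i$. Next I would check \eqref{h3}: the identity $e_iz_i=z_i$ is immediate from $e_i^2=e_i$, and the commutation $e_iz_j=z_je_i$ follows from \eqref{B4} and \eqref{023} when $|i-j|>1$, and from the second equality of \eqref{B5} (rewritten as $e_ie_jg_j=g_je_ie_j$) when $|i-j|=1$. The far commutation $z_iz_j=z_jz_i$ for $|i-j|>1$ from \eqref{h2} is then immediate from the pairwise commutation of $e_i,e_j,g_i,g_j$.

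The main step, and the only place where real work is required, is the braid relation $z_iz_{i+1}z_i=z_{i+1}z_iz_{i+1}$. My approach is to introduce the idempotent $E:=e_ie_{i+1}=e_{i+1}e_i$ and first show that it is central with respect to both $g_i$ and $g_{i+1}$: the identity $Eg_{i+1}=g_{i+1}E$ is exactly the second equality of \eqref{B5}, and $Eg_i=g_iE$ follows from the same relation applied with the roles of $i$ and $i+1$ exchanged (together with $e_ie_{i+1}=e_{i+1}e_i$). Once this centrality is established, I would push the $e$'s appearing in $z_iz_{i+1}z_i=e_ig_ie_{i+1}g_{i+1}e_ig_i$ through the $g$'s using $e_ig_i=g_ie_i$ and the centrality of $E$, and collapse repeated factors using $Ee_i=Ee_{i+1}=E$, eventually arriving at $z_iz_{i+1}z_i=g_ig_{i+1}g_i\,E$. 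A symmetric computation gives $z_{i+1}z_iz_{i+1}=g_{i+1}g_ig_{i+1}\,E$, and the classical braid relation in \eqref{B2} for the $g$'s closes the argument. The only delicate point is bookkeeping the absorption of $e_i$'s and $e_{i+1}$'s into $E$; modulo this, the braid relation for the $z_i$'s is essentially the braid relation for the $g_i$'s with the factor $E$ carried along.
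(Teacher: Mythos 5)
Your proposal is correct and follows essentially the same route as the paper: one verifies the defining relations of $bH_n(q)$ on $e_i$ and $e_ig_i$, the only substantive step being the braid relation, which you reduce (via the centrality of $E=e_ie_{i+1}$, i.e.\ the second chain of relations in \eqref{B5}, together with the absorptions $Ee_i=Ee_{i+1}=E$ and $e_ig_i=g_ie_i$) to $g_ig_{i+1}g_i\,E=g_{i+1}g_ig_{i+1}\,E$. The paper's direct computation $e_ig_ie_jg_je_ig_i=\cdots=g_ig_jg_i\,e_ie_je_i$ uses exactly these same relations, so your reorganization is a matter of presentation rather than a genuinely different argument.
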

\begin{proof}
We need to show that $\iota_1(e_i)$ and $\iota_1(z_i)$ satisfy the defining relations of $bH_n(q)$. Relations in \eqref{023} are directly verified from the definition. The second relation in \eqref{h2} and the first one in \eqref{h3} are consequences of \eqref{023} and \eqref{B2}. To prove the first relation in \eqref{h2} and the second one in \eqref{h3} we will use many times \eqref{B5} and the commutativity relations:
\begin{equation}\label{demohomo}z_iz_jz_i=e_ig_ie_jg_je_ig_i=g_i(e_ie_jg_j)e_ig_i=g_ig_je_ie_je_ig_i=g_ig_je_i(e_je_ig_i)=g_ig_je_ig_ie_je_i=g_ig_jg_ie_ie_je_i.\end{equation}The first relation in \eqref{h2} is concluded by using the braid relation in \eqref{B2} and the inverse process of \eqref{demohomo}. The second relation in \eqref{h3} is a direct computation, as we show next\[e_iz_j=e_ie_jg_j=g_je_ie_j=g_je_je_i=e_jg_je_i=z_je_i.\]Finally, the quadratic relation in \eqref{h4} is proved as follows\[z_i^2=(e_ig_i)^2=e_ig_i^2=e_i(1+(q-q^{-1})e_ig_i)=e_i+(q-q^{-1})e_ig_i=e_i+(q-q^{-1})z_i.\]
\end{proof}

To study the algebra $bH_n(q)$, we will use an adaptation of the element $E_\bI$ in \eqref{idemEA1}. This modification makes sense in $bH_n(q)$ only if $\bI$ is a linear set partition. To be more precise, for $\bI=\{I_1,\ldots,I_r\}\in\LP_n$, set
\begin{equation}\label{idemEA2}
\rE_\bI=\prod_{i=1}^rE_{I_i}\,=\prod_{i\,\sim_\bI\,(i+1)}e_i.
\end{equation}

\begin{lem}\label{042}
The algebra $bH_n(q)$ is generated, as an $\qring$--module, by $\G_n:=\{\rE_\bI z_w\mid\bI\in\LP_n,\,w\in\S_{\|\bI\|}\}$.
\end{lem}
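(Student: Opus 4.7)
The plan is to show that $\G_n$ spans $bH_n(q)$ as an $\qring$--module by checking two things: (i) $1 \in \qring\G_n$, and (ii) $\qring\G_n$ is closed under right multiplication by each algebra generator $e_i$ and $z_i$. Since $bH_n(q)$ is generated as a unital $\qring$--algebra by $\{e_i,z_i\}$, these facts together imply $\qring\G_n = bH_n(q)$. Fact (i) is immediate: let $\bI_\bullet$ denote the finest set partition (all singletons); then $\rE_{\bI_\bullet}$ is an empty product equal to $1$, the Young subgroup $\S_{\|\bI_\bullet\|}$ is trivial, and $z_\id = 1$, so $1 = \rE_{\bI_\bullet}z_\id \in \G_n$.

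Closure under right multiplication by $e_i$ follows from the observation that, by \eqref{h3}, each $e_i$ commutes with every $z_j$; applying Matsumoto's theorem to the braid relations \eqref{h2} (which ensures $z_w$ is well defined independently of reduced expression), $e_i$ commutes with every $z_w$. Hence
\[
\rE_\bI z_w \cdot e_i \;=\; \rE_\bI e_i\, z_w \;=\; \rE_{\bI\vee e_i}\, z_w,
\]
where $\bI\vee e_i$ is the partition obtained by merging the blocks of $\bI$ containing $i$ and $i{+}1$. Linearity of $\bI$ is preserved because merging two adjacent intervals yields an interval, and $\S_{\|\bI\|}\subseteq\S_{\|\bI\vee e_i\|}$, so $\rE_{\bI\vee e_i}z_w \in \G_n$.

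For right multiplication by $z_i$, I would first reduce to the case $i \sim_\bI i{+}1$. If $i \not\sim_\bI i{+}1$, i.e., $s_i \notin \S_{\|\bI\|}$, the key move is to apply $z_i = e_i z_i$ (from \eqref{h3}) to absorb a fresh $e_i$:
\[
\rE_\bI z_w \cdot z_i \;=\; \rE_\bI z_w\, e_i z_i \;=\; \rE_\bI e_i\, z_w z_i \;=\; \rE_{\bI\vee e_i}\, z_w z_i.
\]
Now $s_i \in \S_{\|\bI\vee e_i\|}$ and $w \in \S_{\|\bI\|}\subseteq\S_{\|\bI\vee e_i\|}$, so it suffices to treat the case $s_i \in \S_{\|\bI\|}$. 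In that case $e_i$ divides $\rE_\bI$, so $\rE_\bI e_i = \rE_\bI$, and the quadratic relation \eqref{h4} becomes
\[
\rE_\bI z_i^2 \;=\; \rE_\bI + (q-q^{-1})\,\rE_\bI z_i,
\]
which, together with the braid relations \eqref{h2} restricted to indices in $\S_{\|\bI\|}$, reproduces the defining relations of the Iwahori--Hecke algebra of $\S_{\|\bI\|}$ with $\rE_\bI$ playing the role of the identity. The standard Hecke-algebra induction on $\ell(w)$ therefore gives
\[
\rE_\bI z_w \cdot z_i \;=\; \begin{cases} \rE_\bI z_{ws_i} & \text{if } \ell(ws_i)>\ell(w), \\ \rE_\bI z_{ws_i} + (q-q^{-1})\rE_\bI z_w & \text{if } \ell(ws_i)<\ell(w), \end{cases}
\]
and since $w, ws_i\in\S_{\|\bI\|}$ in both branches, the right-hand side lies in $\qring\G_n$.

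The main conceptual point is the \emph{coarsening trick} used when $s_i\notin\S_{\|\bI\|}$: the identity $z_i = e_iz_i$ guarantees that any $z_i$-action not adapted to the current Young subgroup automatically refines $\bI$ so as to make $s_i$ compatible. Once this is recognized, the remainder is a routine Iwahori--Hecke computation inside the ``parabolic'' subspace $\rE_\bI\,\qring\langle z_w : w\in\S_{\|\bI\|}\rangle$, and I do not expect any further obstacle.
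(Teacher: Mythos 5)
Your proof is correct and rests on the same essential ingredients as the paper's: commuting the $e_i$'s past the $z_j$'s via \eqref{h3}, the absorption $z_i=e_iz_i$ to coarsen the linear partition when $s_i\notin\S_{\|\bI\|}$, and the quadratic relation \eqref{h4} reducing to the ordinary Hecke relations modulo $\rE_\bI$. The only difference is organizational — you verify closure of the span of $\G_n$ under right multiplication by generators and induct on word length, while the paper rewrites an arbitrary word directly into the form $\rE_\bI z_w$ with $\bI$ maximal — so this is essentially the paper's argument.
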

\begin{proof} From \eqref{h3} we get that each word $x$ in the generators $e_i$ and $h_i$ can be written as $e_{i_1}\cdots e_{i_r}z_{j_1}\cdots z_{j_s}$. Moreover, from \eqref{h3} and \eqref{idemEA2} we can choose the maximal $\bI\in\LP_n$ such that $\rE_\bI x=x$, that is, if $\bJ$ is another linear set partition satisfying this condition, then $\bJ\preceq\bI$. Thus, $x$ can be rewritten as $x=\rE_\bI\bzz=\rE_\bI z_{j_1}\cdots z_{j_r}$, where $j_i\sim_\bI(j_i+1)$ for all $i\in[r]$. Observe that, under multiplication by the respectively $e_i$'s, the generators $h_i$ satisfy the defining relations of the Hecke algebra. In fact, we have $e_iz_i^2=e_i(e_i+(q-q^{-1})z_i)=e_i(1+(q-q^{-1})z_i)$. This implies that $x$ is a linear combination of $\rE_\bI z_w$ with $w\in\S_{\|\bI\|}$ a reduced expression.
\end{proof}

\begin{lem}\label{mapped}
Let $\bI\in\LP_n$, and let $w\in S_{\|\bI\|}$. Then $\rE_\bI z_w$ is mapped to $E_\bI g_w$ under the homomorphism $\iota_1$.
\end{lem}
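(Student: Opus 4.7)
The plan is to compute $\iota_1(\rE_\bI z_w)$ directly from the definition of $\iota_1$ on a reduced expression for $w$, and to simplify the resulting expression to $E_\bI g_w$ by a short induction on the length of $w$, relying on two absorption/commutation identities satisfied by $E_\bI$ inside $\E_n(q)$.

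First I would record that $\iota_1(\rE_\bI) = E_\bI$. Since $\bI$ is linear its blocks are consecutive intervals $I_l = [a_l, b_l]$, so by \eqref{idemE} one has $E_{I_l} = \prod_{j=a_l}^{b_l-1} e_{j,j+1} = \prod_{j=a_l}^{b_l-1} e_j$; hence $E_\bI = \prod_l E_{I_l} = \prod_{i\sim_\bI (i+1)} e_i$, which agrees term by term with the defining expression of $\rE_\bI$ in \eqref{idemEA2}. Fixing a reduced expression $w = s_{i_1}\cdots s_{i_k}$ with each $s_{i_j}\in\S_{\|\bI\|}$, one then has $\iota_1(\rE_\bI z_w) = E_\bI (e_{i_1} g_{i_1})(e_{i_2} g_{i_2})\cdots (e_{i_k} g_{i_k})$, so the task reduces to proving this equals $E_\bI g_{i_1}\cdots g_{i_k} = E_\bI g_w$.

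The induction on $k$ rests on two observations about $E_\bI$. Because $i_j \sim_\bI (i_j+1)$, the factor $e_{i_j}$ already occurs in $E_\bI$, so by idempotency \eqref{023} one has $E_\bI e_{i_j} = E_\bI$. Moreover, $\S_{\|\bI\|}$ is precisely the setwise stabiliser of the blocks of the linear partition $\bI$, so the action of each $s_{i_j}$ on $\bI$ is trivial, and the identity $E_\bI g_w = g_w E_{\bI w}$ recalled in Subsection \ref{107} specialises to $E_\bI g_{i_j} = g_{i_j} E_\bI$. The inductive step then runs: absorb $e_{i_1}$ to pass from $E_\bI e_{i_1} g_{i_1}(\cdots)$ to $E_\bI g_{i_1}(e_{i_2} g_{i_2})\cdots (e_{i_k} g_{i_k})$; commute $g_{i_1}$ to the left of $E_\bI$; apply the inductive hypothesis to the length-$(k-1)$ tail; and finally commute $g_{i_1}$ back through $E_\bI$. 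This yields $E_\bI g_{i_1}\cdots g_{i_k}$, which is $E_\bI g_w$ by Matsumoto's theorem (so the result is independent of the choice of reduced expression).

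I do not foresee any serious obstacle; the argument is essentially bookkeeping around the two identities $E_\bI e_i = E_\bI$ (for $i\sim_\bI (i+1)$) and $E_\bI g_i = g_i E_\bI$ (for $s_i\in\S_{\|\bI\|}$), both of which hold automatically from the hypothesis $w\in\S_{\|\bI\|}$ imposed on every simple reflection appearing in the reduced expression.
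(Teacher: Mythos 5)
Your proposal is correct and follows essentially the same route as the paper: expand $\iota_1(\rE_\bI z_w)$ along a reduced expression and repeatedly use the absorption $E_\bI e_{i_j}=E_\bI$ (valid since $i_j\sim_\bI(i_j+1)$) together with the commutation $E_\bI g_{i_j}=g_{i_j}E_{\bI s_{i_j}}=g_{i_j}E_\bI$ coming from $w\in\S_{\|\bI\|}$ stabilising $\bI$. The paper phrases this as a single chain of equalities rather than an explicit induction, but the content is identical.
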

\begin{proof}
Let $s_{i_1}\cdots s_{i_r}$ be a reduced expression of $w$. Then\begin{equation}\label{Eh}\iota_1(\rE_\bI z_w)=E_\bI\iota_1(z_{i_1}\cdots z_{i_r})=E_\bI e_{i_1}g_{i_1}\cdots e_{i_r}g_{i_r}.\end{equation}Now, observe that $E_\bI e_{i_j}g_{i_j}=E_\bI g_{i_j}=g_{i_j}E_{\bI s_j}=g_{i_j}E_\bI$ because $i_j\sim_\bI(i_j+1)$ for all $j\in[r]$. By using this observation on the last equality of \eqref{Eh}, we obtain\[E_\bI e_{i_1}g_{i_1}\cdots e_{i_r}g_{i_r}=g_{i_1}\cdots g_{i_r}E_\bI=g_w E_\bI=E_{\bI w} g_w.\]The last expression is equal to $E_\bI g_w$ because $w$ belong to the stabilizer of $\bI$.  This concludes the proof.
\end{proof}

\begin{thm}\label{130}
The algebra $bH_n(q)$ is a free $\qring$--module with basis $\G_n$. In particular, we get\[\dim bH_n(q)=\sum_{(\mu_1,\ldots,\mu_k)\,\in\,\C_n}\mu_1!\cdots\mu_k!.\]
\end{thm}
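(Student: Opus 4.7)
The plan is to combine Lemma~\ref{042} (spanning) with the homomorphism $\iota_1$ from Proposition~\ref{123} and the known Ryom--Hansen basis of $\E_n(q)$ to pin down the linear independence of $\G_n$. Since Lemma~\ref{042} already shows $\G_n$ generates $bH_n(q)$ as an $\qring$-module, the only remaining task is to prove that its elements are $\qring$-linearly independent.

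First I would invoke Lemma~\ref{mapped}, which gives $\iota_1(\rE_\bI z_w)=E_\bI g_w$ for every $\bI\in\LP_n$ and $w\in\S_{\|\bI\|}$. Recall from Subsection~\ref{107} that the Ryom--Hansen basis of $\E_n(q)$ is $\{E_\bJ g_v\mid\bJ\in\P_n,\,v\in\S_n\}$. In particular, restricting to $\bJ\in\LP_n\subseteq\P_n$ and to $v$ inside the Young subgroup $\S_{\|\bJ\|}\subseteq\S_n$ produces a subset of this basis, which is therefore $\qring$-linearly independent in $\E_n(q)$. The map $\iota_1$ is a homomorphism of $\qring$-algebras, so linear independence of the images pulls back to linear independence of $\G_n$ in $bH_n(q)$. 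Combined with Lemma~\ref{042}, this gives that $\G_n$ is an $\qring$-basis of $bH_n(q)$, and moreover that $\iota_1$ is injective.

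Finally I would compute the cardinality of $\G_n$. The identification $\bI\mapsto\|\bI\|$ is a bijection between $\LP_n$ and $\C_n$: a linear (i.e.\ convex) set partition of $[n]$ is uniquely determined by the ordered sequence of block sizes. Under this bijection $\S_{\|\bI\|}$ is exactly the Young subgroup $\S_{\mu_1}\times\cdots\times\S_{\mu_k}$ when $\|\bI\|=\mu$, which has cardinality $\mu_1!\cdots\mu_k!$. Summing over all compositions $\mu=(\mu_1,\ldots,\mu_k)\in\C_n$ then yields the stated formula
\[
\dim bH_n(q)\;=\;\sum_{(\mu_1,\ldots,\mu_k)\in\C_n}\mu_1!\cdots\mu_k!.
\]

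The only conceptually nontrivial step is justifying that the relevant subset of the Ryom--Hansen basis really is linearly independent, but this is immediate once one notices that we are taking a subfamily indexed by admissible pairs $(\bI,w)$ with $\bI\in\LP_n$ and $w\in\S_{\|\bI\|}\subseteq\S_n$; no collisions occur because $\bI$ and $w$ are read off distinct pieces of the basis labels. Nothing beyond Lemma~\ref{042}, Lemma~\ref{mapped} and the Ryom--Hansen theorem is needed.
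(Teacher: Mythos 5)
Your proposal is correct and follows essentially the same route as the paper: spanning from Lemma \ref{042}, then linear independence of $\G_n$ pulled back through $\iota_1$ using Lemma \ref{mapped} and the fact that the images $E_\bI g_w$ form a subfamily of the Ryom--Hansen basis of $\E_n(q)$. The only difference is that you spell out the cardinality count via the bijection $\LP_n\leftrightarrow\C_n$, which the paper leaves implicit.
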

\begin{proof}
By Lemma \ref{042}, it is enough to prove that $\G_n$ is a linearly independent subset of $bH_n(q)$, which is already satisfied because, due to Lemma \ref{mapped}, the subset $\iota_1(\G_n)$ is linearly independent in the algebra $\E_n(q)$.
\end{proof}

\begin{crl}\label{126}
The homomorphism $\iota_1:bH_n(q)\to\E_n(q)$ is an embedding of $\qring$--algebras.
\end{crl}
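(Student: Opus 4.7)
The plan is to deduce injectivity of $\iota_1$ directly from the results already established in the proof of Theorem \ref{130}. Since $\iota_1$ is already known to be a $\qring$--algebra homomorphism by Proposition \ref{123}, the only thing left to check is that its kernel is trivial.

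First, I would recall that by Theorem \ref{130} the set $\G_n = \{\rE_\bI z_w \mid \bI \in \LP_n,\, w \in \S_{\|\bI\|}\}$ is a $\qring$--basis of $bH_n(q)$. Thus, to show that $\iota_1$ is injective, it suffices to prove that its image $\iota_1(\G_n)$ is $\qring$--linearly independent inside $\E_n(q)$. By Lemma \ref{mapped}, $\iota_1(\rE_\bI z_w) = E_\bI g_w$ for each $\bI \in \LP_n$ and $w \in \S_{\|\bI\|}$, so the claim reduces to showing that the family $\{E_\bI g_w \mid \bI \in \LP_n,\, w \in \S_{\|\bI\|}\}$ is linearly independent in $\E_n(q)$.

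This last fact is immediate from the Ryom-Hansen basis recalled in Subsection \ref{107}, which asserts that $\{E_\bI g_w \mid \bI \in \P_n,\, w \in \S_n\}$ is an $\qring$--basis of the algebra of braids and ties $\E_n(q)$. Since $\LP_n \subseteq \P_n$ and $\S_{\|\bI\|} \subseteq \S_n$ for every $\bI$, the set $\iota_1(\G_n)$ is a subset of this basis, and is therefore linearly independent. Consequently, $\iota_1$ sends a $\qring$--basis of $bH_n(q)$ to a linearly independent subset of $\E_n(q)$, which forces $\iota_1$ to be injective.

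I do not expect any serious obstacle: all the work has been done in Lemma \ref{mapped} and Theorem \ref{130}. The only subtle point worth highlighting is that the indexing restriction $w \in \S_{\|\bI\|}$ (coming from the generating set $\G_n$) is compatible with the inclusion into the Ryom-Hansen basis, so no collapsing or rescaling occurs and the argument is a clean comparison of bases.
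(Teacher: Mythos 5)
Your argument is correct and coincides with the paper's intended reasoning: the corollary is stated without a separate proof precisely because the proof of Theorem \ref{130} already shows, via Lemma \ref{mapped}, that $\iota_1$ carries the $\qring$--basis $\G_n$ of $bH_n(q)$ onto the linearly independent subset $\{E_\bI g_w \mid \bI\in\LP_n,\, w\in\S_{\|\bI\|}\}$ of the Ryom--Hansen basis of $\E_n(q)$. Your extra remark that the indexing pairs inject into $\P_n\times\S_n$ is the right point to make explicit, but it is the same comparison-of-bases argument the paper relies on.
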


\begin{crl}\label{rep}
There is a faithful representation $\varphi$ of $bH_n(q)$ in $V^{\otimes n}$ given by $e_i\mapsto\bE_i$ and $z_i\mapsto \bE_i\circ\bG_i$.
\end{crl}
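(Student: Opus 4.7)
The plan is to obtain $\varphi$ as the composition of two injective algebra homomorphisms already established in the paper. Concretely, I would set $\varphi = \rho \circ \iota_1$, where $\iota_1:bH_n(q)\to\E_n(q)$ is the embedding provided by Corollary \ref{126}, and $\rho:\E_n(q)\to\End_S(V^{\otimes n})$ is the faithful tensor representation of the bt--algebra recalled at the beginning of Subsection \ref{107} (Ryom--Hansen's theorem), defined by $e_i\mapsto\bE_i$ and $g_i\mapsto\bG_i$.

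First, I would check that $\varphi$ has the claimed values on the generators: by Proposition \ref{123} we have $\iota_1(e_i)=e_i$ and $\iota_1(z_i)=e_ig_i$, so that
\[
\varphi(e_i)=\rho(e_i)=\bE_i,\qquad\varphi(z_i)=\rho(e_ig_i)=\rho(e_i)\circ\rho(g_i)=\bE_i\circ\bG_i.
\]
Since both $\iota_1$ and $\rho$ are $\qring$--algebra homomorphisms, their composition $\varphi$ is an $\qring$--algebra homomorphism. Hence the assignment $e_i\mapsto\bE_i$, $z_i\mapsto\bE_i\circ\bG_i$ automatically respects the defining relations \eqref{023}, \eqref{h2}, \eqref{h3}, \eqref{h4} of $bH_n(q)$, so no direct verification on operators is necessary.

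For faithfulness, $\iota_1$ is injective by Corollary \ref{126} and $\rho$ is faithful by Ryom--Hansen's theorem, so $\varphi=\rho\circ\iota_1$ is injective as a composition of injective maps. This gives the desired faithful representation. There is no serious obstacle here; the only subtle point worth stating explicitly is that the factorization $\varphi=\rho\circ\iota_1$ is what makes the argument almost automatic, and that in $\End_S(V^{\otimes n})$ the product $\rho(e_i)\rho(g_i)$ coincides with $\bE_i\circ\bG_i$ so the stated formula for $\varphi(z_i)$ matches the construction.
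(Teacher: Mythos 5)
Your proposal is correct and follows exactly the paper's argument: the paper likewise defines $\varphi=\rho\circ\iota_1$ and deduces faithfulness from the injectivity of $\rho$ and $\iota_1$ (Corollary \ref{126}). Your explicit check of the generator values via Proposition \ref{123} is a harmless elaboration of the same route.
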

\begin{proof}
This representation is immediately given by the composition $\rho\circ\iota_1$. The faithfulness of $\varphi$ follows from injectivity of $\rho$ and $\iota_1$.
\end{proof}

\begin{rem}\label{127}
It is known that the algebra of braids and ties $\E_n(q)$ can be regarded as a subalgebra of the \emph{$d$--modular Yokonuma--Hecke algebra $Y_{n,d}(q)$} \cite{Yo67,Ju98,JuKa01,Ju04}. Indeed, $\E_n(q)$ is the subalgebra of $Y_{n,d}(q)$ generated by the elements $g_1,\ldots,g_{n-1}$ together with the idempotents $e_1,\ldots,e_{n-1}$ defined as follows:\begin{equation}\label{128}e_i=\frac{1}{d}\sum_{s=0}^{d-1}t_i^st_{i+1}^{-s},\end{equation}where $Y_{n,d}(q)$ is the algebra presented by generators $g_1,\ldots,g_{n-1}$ satisfying \eqref{B2}, and generators $t_1,\ldots,t_n$ subject to the following relations \cite{Ju98,JuKa01,Ju04,ChPo14}:\begin{gather}t_i^{\,d}=1;\qquad t_it_j=t_jt_i;\qquad g_it_j=t_{s_i(j)}g_i;\qquad g_i^2=1+(q-q^{-1})e_ig_i.\end{gather}Thus, due to Corollary \ref{126}, the tied--boxed Hecke algebra $bH_n(q)$ is the subalgebra of $Y_{n,d}(q)$ generated by the idempotents in \eqref{128}, together with the elements $z_1,\ldots,z_{n-1}$ defined as follows:\[z_i=\frac{1}{d}\sum_{s=0}^{d-1}t_i^st_{i+1}^{-s}g_i.\]
\end{rem}

In what follows we will show that the tied--boxed Hecke algebra $bH_n(q)$ is cellular in the sense of Graham--Lerher \cite{GrLe96}. Similarly to the $bt$--algebra \cite{EsRyH18}, we first construct an ideal decomposition of $bH_n(q)$.

First, we define a \emph{Mo\"{e}bius function} on the lattice $(\LP_n,\preceq)$ as the map $\mu:\LP_n\times\LP_n\to\Z$ such that\[\mu(\bI,\bJ)=\begin{cases}(-1)^{|\bI|-|\bJ|}&\text{if }\bI\preceq\bJ,\\0&\text{otherwise.}\end{cases}\]This Mo\"{e}bius function will allows us to construct a set of orthogonal idempotents on $bH_n(q)$, which are a special case of the general construction given in \cite{So67,Gr73}.

For each $\bI\in\LP_n$, we define\[\bbE_\bI=\sum_{\bJ\supseteq\bI}\mu(\bI,\bJ)E_\bJ.\]

For example, for $n=3$, we have the following idempotents:\[\begin{array}{lcl}
\bbE_{\{\{1\},\{2\},\{3\}\}}&=&\bE_{\{\{1\},\{2\},\{3\}\}}-\bE_{\{\{1,2\},\{3\}\}}-\bE_{\{\{1\},\{2,3\}\}}+\bE_{\{\{1,2,3\}\}}=1-e_1-e_2+e_1e_2,\\
\bbE_{\{\{1,2\},\{3\}\}}&=&\bE_{\{\{1,2\},\{3\}\}}-\bE_{\{\{1,2,3\}\}}=e_1-e_1e_2,\\
\bbE_{\{\{1\},\{2,3\}\}}&=&\bE_{\{\{1\},\{2,3\}\}}-\bE_{\{\{1,2,3\}\}}=e_2-e_1e_2,\\
\bbE_{\{\{1,2,3\}\}}&=&\bE_{\{\{1,2,3\}\}}=e_1e_2.
\end{array}\]

\begin{pro}\label{propE}
The following properties hold.
\begin{enumerate}
\item $\I_\bbE:=\{\bbE_\bI\mid\bI\in\LP_n\}$ is a complete set of central orthogonal idempotent elements of $bH_n$.
\item For all $\bI\in\LP_n$ we have $\bbE_\bI\bE_\bJ=\begin{cases}\bbE_\bI&\text{if }\bJ\preceq\bI,\\
0&\text{otherwise.}\end{cases}$\label{129}
\end{enumerate}
\end{pro}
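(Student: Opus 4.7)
The plan is to establish part (2) first by a direct Möbius-function computation and then derive the three assertions of part (1) as formal consequences.

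The key structural observation is that $(\LP_n,\preceq)$ is canonically isomorphic to the Boolean lattice $2^{[n-1]}$ via $\bI\mapsto S_\bI:=\{i\in[n-1]\mid i\sim_\bI(i+1)\}$, with $\vee$ corresponding to union. Under this identification $E_\bI=\prod_{i\in S_\bI}e_i$ is well defined (the $e_i$ being commuting idempotents), one has $E_\bI E_\bJ=E_{\bI\vee\bJ}$, and the stated $\mu(\bI,\bJ)=(-1)^{|\bI|-|\bJ|}$ is exactly the Möbius function of this Boolean lattice.

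For part (2), the case $\bJ\preceq\bI$ is immediate: every $\bK\succeq\bI$ satisfies $\bK\vee\bJ=\bK$, so
\[\bbE_\bI E_\bJ=\sum_{\bK\succeq\bI}\mu(\bI,\bK)E_\bK=\bbE_\bI.\]
For $\bJ\not\preceq\bI$, I would expand $\bbE_\bI E_\bJ=\sum_{\bK\succeq\bI}\mu(\bI,\bK)E_{\bK\vee\bJ}$, write $A=S_\bI$ and $B=S_\bJ$, and split each $K=S_\bK\supseteq A$ as $K=(K\cap B)\sqcup(K\setminus B)$. The sum factors; the factor indexed by $S_1:=K\cap B$ is an alternating sum $\sum_{A\cap B\subseteq S_1\subseteq B}(-1)^{|S_1|-|A\cap B|}$, which vanishes by inclusion--exclusion because $A\cap B\subsetneq B$ (equivalently $B\not\subseteq A$). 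Hence $\bbE_\bI E_\bJ=0$ in this case.

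From (2) the three assertions of (1) follow rapidly. Centrality: the relations \eqref{023} and \eqref{h3} force every $e_i$ to commute with every generator of $bH_n(q)$, so each $E_\bJ$ and hence each $\bbE_\bI$ is central. Completeness: Möbius inversion applied to the defining formula gives $E_\bI=\sum_{\bK\succeq\bI}\bbE_\bK$; taking $\bI$ to be the finest partition (for which $E_\bI=1$ as an empty product) yields $\sum_\bK\bbE_\bK=1$. Idempotency and orthogonality: expanding $\bbE_{\bI'}=\sum_{\bK\succeq\bI'}\mu(\bI',\bK)E_\bK$ and applying (2) termwise yields
\[\bbE_\bI\bbE_{\bI'}=\sum_{\bI'\preceq\bK\preceq\bI}\mu(\bI',\bK)\bbE_\bI=\delta_{\bI,\bI'}\bbE_\bI\]
by the defining property of $\mu$. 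The main obstacle is the combinatorial bookkeeping in part (2); in contrast with the analogous result for $\E_n(q)$, where permutations act nontrivially on set partitions and entangle the idempotents with the $g_w$, here the centrality of the $e_i$ --- a direct consequence of \eqref{h3} --- makes the passage from (2) to (1) purely formal.
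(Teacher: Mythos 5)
Your argument is correct, and it is in the same spirit as what the paper relies on, but the paper itself gives no written proof here: it simply declares the result ``similar to'' \cite[Proposition 39]{EsRyH18}, i.e.\ it imports the Solomon--M\"obius construction already carried out for $\E_n(q)$ over the full lattice $\P_n$. What you do differently is to prove everything directly inside $bH_n(q)$: you identify $(\LP_n,\preceq)$ with the Boolean lattice $2^{[n-1]}$ via $\bI\mapsto\{i\mid i\sim_\bI(i+1)\}$ (under which $E_\bI E_\bJ=E_{\bI\vee\bJ}$ and the paper's $\mu$ is the Boolean M\"obius function), obtain part (2) by the factorized inclusion--exclusion computation, and then get centrality, completeness, idempotency and orthogonality formally, using that relations \eqref{023} and \eqref{h3} make every $e_i$ central in $bH_n(q)$. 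This buys a genuinely self-contained and simpler proof than the cited one: in $\E_n(q)$ the idempotents interact nontrivially with the group part ($\bbE_\bI g_w=g_w\bbE_{\bI w}$) and the lattice is all of $\P_n$, whereas in the boxed setting the Boolean structure and centrality collapse the bookkeeping, as you note. The only points worth making explicit in a final write-up are the two small lattice facts you use tacitly: that the join in $\P_n$ of two linear set partitions is again linear with $S_{\bI\vee\bJ}=S_\bI\cup S_\bJ$ (this needs the one-line observation that blocks are intervals, so $i$ and $i+1$ cannot be joined through a chain without being joined directly), and that $E$ of the all-singleton partition is $1$, which justifies reading completeness off from M\"obius inversion.
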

\begin{proof}
The proof is similar to the one of \cite[Proposition 39]{EsRyH18}.
\end{proof}

As an immediate consequence of Proposition \ref{propE} we obtain that $bH_n(q)$ can be decomposed as a direct sum of two--side ideals.
\begin{equation}\label{096}
bH_n(q)\simeq\displaystyle\bigoplus_{\mu\in\C_n}bH_n(q)^\mu,\end{equation}
where $bH_n(q)^\mu=\bbE_\bI bH_n(q)$ and $\bI$ is the unique linear set partition satisfying $\|\bI\|=\mu$. Moreover, every two--side ideal $bH_n(q)^\mu$ is also a subalgebra of $bH_n(q)$.

\begin{crl}\label{isoconHecke}
Let $\bI\in\LP_n$ and suppose that $\|\bI\|=\mu=(\mu_1,\ldots,\mu_k)\in\C_n$. Then, the $\qring$--linear map\[\begin{array}{ccccc}bH_n(q)^\mu&\to&H_\mu&\simeq&H_{\mu_1}\otimes\cdots\otimes H_{\mu_k}\\[3mm]\bbE_\bI z_w&\mapsto&z_w=z_{w_1}\cdots z_{w_k}&\mapsto&z_{w_1}\otimes\cdots\otimes z_{w_k}\end{array}\]is an isomorphism of $\qring$--algebras. In particular $\dim bH_n(q)^\mu=\mu_1!\cdots\mu_k!$.
\end{crl}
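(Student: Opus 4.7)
My plan is to prove the corollary in three stages: reduce the ideal $bH_n(q)^\mu$ to a convenient spanning set, build the inverse map from the Iwahori--Hecke presentation, and verify bijectivity via the embedding $\iota_1$ of Corollary \ref{126}.

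First, I would pin down a small spanning set for $bH_n(q)^\mu = \bbE_\bI bH_n(q)$, where $\bI$ is the unique element of $\LP_n$ with $\|\bI\| = \mu$. By Theorem \ref{130}, the full algebra $bH_n(q)$ is spanned by $\{\rE_\bJ z_v : \bJ \in \LP_n,\, v \in \S_{\|\bJ\|}\}$, so $bH_n(q)^\mu$ is spanned by $\{\bbE_\bI \rE_\bJ z_v\}$. Applying Proposition \ref{propE}(\ref{129}), $\bbE_\bI \rE_\bJ$ equals $\bbE_\bI$ when $\bJ \preceq \bI$ and $0$ otherwise; when $\bJ \preceq \bI$ the index set $\S_{\|\bJ\|}$ is contained in $\S_\mu$. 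Hence $bH_n(q)^\mu$ is spanned by $\{\bbE_\bI z_w : w \in \S_\mu\}$, a set of at most $\mu_1!\cdots\mu_k!$ elements.

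Next, I would construct an algebra homomorphism $H_\mu \to bH_n(q)^\mu$ from the Hecke presentation, sending each simple transposition $s_i \in \S_\mu$ (necessarily with $i \sim_\bI (i+1)$) to $\bbE_\bI z_i$. The braid and far-commutation relations descend from \eqref{h2} by pulling the central idempotent $\bbE_\bI$ to the front (Proposition \ref{propE}(1)). For the quadratic relation, \eqref{h4} gives
\[
(\bbE_\bI z_i)^2 \;=\; \bbE_\bI e_i + (q-q^{-1})\bbE_\bI z_i.
\]
The crucial observation is that $e_i = \rE_{\bJ_i}$ for the atomic linear partition $\bJ_i$ with only nontrivial block $\{i,i+1\}$, and $\bJ_i \preceq \bI$ iff $i \sim_\bI (i+1)$, which is precisely our assumption. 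Thus Proposition \ref{propE}(\ref{129}) forces $\bbE_\bI e_i = \bbE_\bI$, and we recover $(\bbE_\bI z_i)^2 = \bbE_\bI + (q-q^{-1})\bbE_\bI z_i$, exactly the Hecke quadratic relation with $\bbE_\bI$ playing the role of the identity. A routine induction on reduced expressions then gives $h_w \mapsto \bbE_\bI z_w$ for all $w \in \S_\mu$, and surjectivity is immediate since the images of the $h_i$ exhaust the generators of $bH_n(q)^\mu$.

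For injectivity, and hence the claimed dimension $\mu_1!\cdots\mu_k!$, I would use the embedding $\iota_1 \colon bH_n(q) \hookrightarrow \E_n(q)$. Combining Lemma \ref{mapped} with the Möbius formula defining $\bbE_\bI$ (which has the same shape on both sides of $\iota_1$), one obtains $\iota_1(\bbE_\bI z_w) = \bbE_\bI g_w$, which is a subfamily of the known $\qring$--basis $\{\bbE_\bJ g_v\}$ of $\E_n(q)$ recalled after \eqref{propIdempotentesEI}. Hence $\{\bbE_\bI z_w : w \in \S_\mu\}$ is linearly independent, so our spanning set is a basis and the surjection $H_\mu \to bH_n(q)^\mu$ is an isomorphism. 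Its inverse sends $\bbE_\bI z_w$ to $h_w = h_{w_1}\cdots h_{w_k}$, which matches the tensor decomposition $H_\mu \simeq H_{\mu_1}\otimes\cdots\otimes H_{\mu_k}$ under the obvious factorization of $\S_\mu$; the dimension statement follows.

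The only subtle point is the quadratic-relation check, where the spurious $\bbE_\bI e_i$ term must collapse to $\bbE_\bI$; everything else is bookkeeping on the already-established basis of $bH_n(q)$ and the idempotent properties in Proposition \ref{propE}.
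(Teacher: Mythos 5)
Your reduction to the spanning set $\{\bbE_\bI z_w \mid w\in\S_\mu\}$ and your verification that $h_i\mapsto\bbE_\bI z_i$ satisfies the Hecke relations (with $\bbE_\bI e_i=\bbE_\bI$ doing the work in the quadratic relation) are correct, and this is essentially the paper's route, made more explicit. The gap is in your injectivity step: the identity $\iota_1(\bbE_\bI z_w)=\bbE_\bI g_w$ is false in general, because the two idempotents called $\bbE_\bI$ are \emph{not} matched by $\iota_1$. In $bH_n(q)$ the sum defining $\bbE_\bI$ runs only over the \emph{linear} coarsenings $\bJ\in\LP_n$ of $\bI$ with the Boolean M\"obius values $(-1)^{|\bI|-|\bJ|}$, whereas in $\E_n(q)$ the sum in \eqref{idemEI} runs over \emph{all} coarsenings $\bJ\in\P_n$ with the partition-lattice M\"obius function. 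Concretely, for $n=3$ and $\bI=\{\{1\},\{2\},\{3\}\}$ one has $\iota_1(\bbE_\bI)=1-e_1-e_2+e_1e_2$, while the idempotent of $\E_3(q)$ attached to the same $\bI$ is $1-e_1-e_2-e_{1,3}+2e_1e_2$; these differ by $e_{1,3}-e_1e_2\neq0$. Hence the images $\iota_1(\bbE_\bI z_w)$ are not a subfamily of the basis $\{\bbE_\bJ g_v\}$ of $\E_n(q)$, and linear independence does not follow as you stated it.

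The conclusion you want is nevertheless easy to salvage, and without invoking $\iota_1$ at all: expand $\bbE_\bI z_w=\sum_{\bJ\succeq\bI,\ \bJ\in\LP_n}(-1)^{|\bI|-|\bJ|}\rE_\bJ z_w$ in the basis $\{\rE_\bJ z_v\}$ of $bH_n(q)$ from Theorem \ref{130}; this expression is unitriangular with leading term $\rE_\bI z_w$, so the family $\{\bbE_\bI z_w\mid w\in\S_\mu\}$ is linearly independent (equivalently, one can count dimensions using completeness of the idempotents $\I_\bbE$, since $\sum_{\mu\in\C_n}\mu_1!\cdots\mu_k!=\dim bH_n(q)$). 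This is precisely the paper's argument: it deduces that $\{\bbE_\bI z_w\mid w\in\S_\mu\}$ is an $\qring$--basis of $bH_n(q)^\mu$ directly from the linear basis of $bH_n(q)$ together with Proposition \ref{propE}(2). If you do want to use $\iota_1$, you may still argue triangularity of $\iota_1(\bbE_\bI z_w)$ against the basis $\{E_\bJ g_v\}$ of $\E_n(q)$ via Lemma \ref{mapped}, but not by identifying the image with $\bbE_\bI g_w$.
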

\begin{proof}
It is enough to note that $bH_n(q)^\mu$ is a free $\qring$--module with $\qring$--basis $\{\bbE_\bI z_w\mid w\in\S_\mu\}$, which is obtained by using the linear basis $\{\bE_{\bI} z_w\}$ of $bH_n(q)$ together with Proposition \ref{propE}(\ref{129}).
\end{proof}

In virtue of Corollary \ref{isoconHecke}, the decomposition in \eqref{096} can be rewritten as follows:
\begin{equation}\label{decompositionbH}
bH_n(q)\,\,\,\simeq\displaystyle\bigoplus_{(\mu_1,\ldots,\mu_k)\in\C_n}
H_{\mu_1}\otimes\cdots\otimes H_{\mu_k}.\end{equation}
Due to this isomorphism, the representation theory of the algebra $bH_n(q)$ is well known. In fact, we can construct a cell datum for $bH_n(q)$ as follows:
\begin{enumerate}
\item The poset is the set $\LP_n$ of linear multipartitions of $n$ together with the usual \emph{dominance order} of multipartitions, which we will denote by $\triangleright$.
\item For each $\blam\in \LP_n$ we consider $T(\blam)=\{\bt\in\Std(\blam)\mid\bt\text{ of the initial kind}\}$.
\item The antiautomorphism $*:bH_n(q)\to bH_n(q)$ is given by $e_i^*=e_i$ and $z_i^*=z_i$.
\item For each pair $\bs,\bt \in T(\blam)$, we define\[\bmm_{\bs\bt}^\blam=z_{d(\bs)}^\ast\bmm_\blam z_{d(\bt)},\quad\text{where}
\quad\bmm_\blam=\bbE_{\bI_\blam}\sum_{w\in\S_\blam}q^{\ell(w)}z_w.\]
\end{enumerate}

\begin{thm}\label{131}
The tied--boxed Hecke algebra $bH_n(q)$ is cellular with cellular basis:\[B=\left\{\bmm_{\bs\bt}^\blam\mid\blam\in\LP_n\text{ and }\bs,\bt\in T(\blam)\right\}.\]
\end{thm}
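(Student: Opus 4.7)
The plan is to invoke the ideal decomposition~\eqref{decompositionbH} together with Corollary~\ref{isoconHecke}, transport the Murphy cellular structure from each Young Hecke algebra summand, and then assemble these into a global cellular structure on $bH_n(q)$.

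First, I would identify the basis elements under the concrete isomorphism $bH_n(q)^\mu \simeq H_\mu \simeq H_{\mu_1}\otimes\cdots\otimes H_{\mu_k}$ of Corollary~\ref{isoconHecke}. Given $\blam=(\lambda^{(1)},\ldots,\lambda^{(k)})$ a multipartition with $\comp(\blam)=\mu$, the element $\bmm_\blam=\bbE_{\bI_\blam}\sum_{w\in\S_\blam}q^{\ell(w)}z_w$ corresponds under this isomorphism to the tensor product Murphy element $m_{\lambda^{(1)}}\otimes\cdots\otimes m_{\lambda^{(k)}}$; this uses that $\bbE_{\bI_\blam}$ is the identity element of $bH_n(q)^\mu$ and that $z_w$ factors as $z_{w_1}\cdots z_{w_k}$ for $w=w_1\cdots w_k\in\S_\blam\leq\S_\mu$. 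Consequently, for $\bs,\bt\in T(\blam)$, the element $\bmm_{\bs\bt}^\blam=(z_{d(\bs)})^\ast\bmm_\blam z_{d(\bt)}$ maps exactly to the Murphy basis element $m_{\bs\bt}^\blam$ of $H_\mu$ recalled in Subsection~\ref{095}. Since $B_{H_\mu}$ is a cellular basis of $H_\mu$, transporting through the isomorphism furnishes a cellular basis of each two-sided ideal $bH_n(q)^\mu$, with poset $\{\blam\in\MPar_n\mid\comp(\blam)=\mu\}$ under dominance order and indexing sets $T(\blam)$.

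Next, I would verify that the antiautomorphism $\ast$ defined by $e_i^\ast=e_i$ and $z_i^\ast=z_i$ is well-defined (a check on the defining relations~\eqref{023},~\eqref{h2},~\eqref{h3},~\eqref{h4}, all of which are invariant under reversal since the $e_i$'s commute with each other and the quadratic relation is palindromic) and that it preserves each summand $bH_n(q)^\mu$. The latter holds because $\bbE_{\bI_\blam}^\ast = \bbE_{\bI_\blam}$: indeed $\bbE_{\bI_\blam}$ is a $\Z$-linear combination of products $E_\bJ=\prod_{i\sim_\bJ(i+1)}e_i$ of pairwise commuting idempotents, each of which is fixed by $\ast$. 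Under the identification of the previous step, $\ast$ restricts on each summand to the standard Murphy involution, so $(\bmm_{\bs\bt}^\blam)^\ast=\bmm_{\bt\bs}^\blam$.

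Finally, I would assemble the cellular structures into a single cell datum: take the poset to be the disjoint union, over $\mu\in\C_n$, of $\{\blam\in\MPar_n\mid\comp(\blam)=\mu\}$ with intra-summand dominance order $\triangleright$ and inter-summand incomparability, with indexing sets $T(\blam)$ as above and basis elements $\bmm_{\bs\bt}^\blam$ as defined. The multiplicative cellular axiom reduces to the cellular axiom on each $H_\mu$, because $bH_n(q)^\mu\cdot bH_n(q)^{\mu'}=0$ for $\mu\neq\mu'$ (a consequence of the centrality and orthogonality in Proposition~\ref{propE}): for any $a\in bH_n(q)$ and any $\bmm_{\bs\bt}^\blam\in bH_n(q)^{\comp(\blam)}$, only the $\comp(\blam)$-component of $a$ contributes, and the cellularity of $B_{H_{\comp(\blam)}}$ yields the required congruence modulo the cellular ideal $bH_n(q)^{\triangleright\blam}$. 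I do not foresee a substantial obstacle here; the main point of care is confirming the concrete correspondence between $\bmm_\blam$ and the tensor product Murphy element across the isomorphism of Corollary~\ref{isoconHecke}, which is a direct computation using that $\bbE_{\bI_\blam}z_w=z_w\bbE_{\bI_\blam}$ for $w$ stabilizing $\bI_\blam$.
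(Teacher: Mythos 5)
Your proposal is correct and follows essentially the same route as the paper: decompose $bH_n(q)$ into the two--sided ideals $bH_n(q)^\mu$ via Proposition \ref{propE}, use Corollary \ref{isoconHecke} to transport the Murphy cellular bases of the Young Hecke algebras $H_\mu$, identify $\bmm_{\bs\bt}^\blam$ with the corresponding tensor-product Murphy element (the paper's factorization \eqref{124}), and assemble the cell data using the orthogonality of the summands. The paper merely carries out the verification of axiom \textbf{(ii)} more explicitly, by reducing to the generators $a=e_i$ and $a=z_i$ and computing within the factorization, which is the same mechanism you invoke abstractly.
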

\begin{proof}
Since the Young Hecke algebras $H_\mu$ are cellular, see Subsection \ref{095}, the isomorphism in Corollary \ref{isoconHecke} implies that the algebras $bH_n(q)^\mu$ are cellular as well. Moreover, a cellular basis for $bH_n(q)^\mu$ is given by\[B_\mu=\left\{\bmm_{\bs\bt}^\blam\mid\comp(\blam)=\mu\text{ and }\bs,\bt\in T(\blam)\right\}.\]Then, from \eqref{decompositionbH}, it is immediate that $bH_n(q)$ is a free $\qring$--module with basis $B=\coprod_\mu B_\mu$. Before proving that $B$ satisfies the defining conditions of a cellular basis, observe that the elements $\bmm_{\bs\bt}^\blam$ can be decomposed as a product of classical Murphy's elements. Indeed, if $\bs=(\s_1,\ldots,\s_k)$ and $\bt=(\t_1,\ldots,\t_k)$ are standard $\blam=(\lambda^{(1)},\ldots,\lambda^{(k)})$ tableaux of the initial kind, then\[\begin{array}{rcl}
\bmm_{\bs\bt}^\blam&=&z_{d(\bs)}^\ast\left(\bbE_{\bI_\blam}\sum_{w\in\S_\blam}q^{\ell(w)}z_w\right)z_{d(\bt)},\\[2mm]
&=&\bbE_{\bI_\blam}(z_{d(\s_1)}\cdots z_{d(\s_k)})^\ast\cdot(m_{\lambda^{(1)}}\cdots m_{\lambda^{(k)}})\cdot z_{d(\t_1)}\cdots z_{d(\t_k)},\\[2mm]
&=&\bbE_{\bI_\blam}z_{d(\s_k)}^\ast\cdots z_{d(\s_1)}^*\cdot(m_{\lambda^{(1)}}\cdots m_{\lambda^{(k)}})\cdot z_{d(\t_1)}\cdots z_{d(\t_k)}.\end{array}\]Now, as $z_{d(\s_i)}$ and $m_{\lambda^{(j)}}$ belong to different components of the decomposition of $bH_n(q)$ in \eqref{decompositionbH}, we have
\begin{equation}\label{124}
\bmm_{\bs\bt}^\blam=\bbE_{\bI_\blam}(z_{d(\s_l)}^\ast m_{\lambda^{(1)}} z_{d(\t_1)})\cdots(z_{d(\s_k)}^\ast m_{\lambda^{(k)}} z_{d(\t_k)})=\bbE_{\bI_\blam}m^{\lambda^{(1)}}_{\s_1\t_1}\cdots m^{\lambda^{(k)}}_{\s_k\t_k}.\end{equation}Since the factors in the above decomposition commute with each other and $\bbE_{\bI_\blam}$ is invariant under $\ast$ (it is immediate from the definitions), we have\[\left(\bmm_{\bs\bt}^\blam\right)^\ast=\bbE_{\bI_\blam}^\ast \left(m^{\lambda^{(1)}}_{\s_1\t_1}\right)^\ast\cdots\left(m^{\lambda^{(k)}}_{\s_k\t_k}\right)^\ast=\bbE_{\bI_\blam}m^{\lambda^{(1)}}_{\t_1\s_1}\cdots m^{\lambda^{(k)}}_{\t_k\s_k}=\bmm_{\bt\bs}^\blam.\]To prove Condition \textbf{(ii)} it is enough to show that it holds for the cases when $a=e_i$ and $a=z_j$. The case $a=e_i$ is trivial because $\bbE_{\bI_\blam}e_i=\bbE_{\bI_\blam}$ or $\bbE_{\bI_\blam}e_i=0$, see Proposition \ref{propE}. The same occurs for the case when $a=z_i$, with $i$ and $i+1$ belong to different blocks of the set partition $\bI_\blam$, because $z_i=e_iz_i$. Finally, suppose that $a=z_i$, with $i$ and $i+1$ belonging to the block $I_j$ of $\bI_\blam$. Then,\[\bmm_{\bs\bt}^\blam z_i=\left(\bbE_{\bI_\blam} m^{\lambda^{(1)}}_{\s_1\t_1}\cdots m^{\lambda^{(j)}}_{\s_j\t_j} \cdots m^{\lambda^{(k)}}_{\s_k\t_k}\right)z_i=\bbE_{\bI_\blam} m^{\lambda^{(1)}}_{\s_1\t_1}\cdots \left(m^{\lambda^{(j)}}_{\s_j\t_j} z_i\right) \cdots  m^{\lambda^{(k)}}_{\s_k\t_k}.\]By applying Condition \textbf{(ii)} in the factor $m^{\lambda^{(j)}}_{\s_j\t_j} h_i$, we obtain\begin{equation}\label{125}\bmm_{\bs\bt}^\blam z_i=\sum_{\V_j\in \Std(\lambda^{(j)})} r_{\V_j}\bbE_{\bI_\blam} m^{\lambda^{(1)}}_{\s_1\t_1}\cdots m^{\lambda^{(j)}}_{\s_j\V_j} \cdots  m^{\lambda^{(k)}}_{\s_k\t_k} \quad \mod{H_{n_j}^{\lambda^{(j)}}}\end{equation}
where $n_j$ is the size of $\lambda^{(j)}$. Note that if $\mu\triangleright\lambda^{(j)}$, then $\bmu=(\lambda^{(1)},\ldots,\mu,\ldots\lambda^{(k)})\triangleright(\lambda^{(1)},\ldots, \lambda^{(j)}, \lambda^{(k)})=\blam$. Thus, by using the inverse process of the decomposition in \eqref{124}, the equality \eqref{125} can be rewritten as\[\bmm_{\bs\bt}^\blam z_i=\sum_{\bv_j\in\Std(\blam)}r_{\bv_j}\bmm_{\bs\bv_j}^\blam\mod{bH_n^\blam},\]where $\bv_j=(\t_1,\ldots,\V_j,\ldots,\t_k)$. This concludes the proof.
\end{proof}

\begin{rem}\label{celular a celular}
Let $\mu\in \C_n$ and suppose that $\alpha $ is the partition of $n$ obtained by reordering the parts of $\mu$. Then, the embedding $\iota_1:bH_n(q)\hookrightarrow\E_n(q)$ induces an embedding $\iota_1^\mu:bH_n(q)^\mu\hookrightarrow\E_n(q)^\alpha$, which is compatible with the cell structure of these $\qring$--algebras. Indeed, $\iota_1^{\mu}$ sends the basis element $\bmm_{\bs\bt}^\blam$ of $bH_n(q)^\mu$ to the basis element $m_{\bms\bmt}^\Lambda$ of $\E_n(q)^\alpha$ whose indexes we will explain below.

First,  $\Lambda$ is the pair of multipartitions $(\blam^\ord\mid\bmu)$, where $\blam^\ord$ is the increasing multipartition obtained by reordering the parts of $\blam=(\lambda^{(1)},\ldots,\lambda^{(k)})$ and $\bmu=((1^{m_1}),\ldots,(1^{m_l}))$, where $m_i$ are the multiplicities of equal $\lambda^{(i)}$'s in $\blam^\ord$. On the other hand, $\bms$ (resp. $\bmt$) is the standard $\Lambda$--tableau $(\bs^\ord\mid\bt^\bmu)$  (resp. $(\bt^\ord\mid\bt^\bmu)$), where $\bs^\ord$ (resp. $\bt^\ord$) is the increasing $\blam^\ord$--multitableau obtained by reordering the parts of $\bs$ (resp. $\bt$). See Figure \ref{122} for an example.\begin{figure}[H]
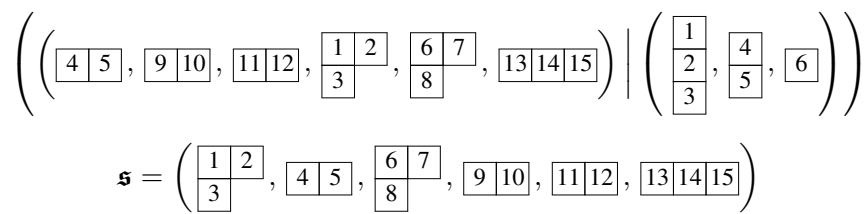

\figtwofiv\caption{The $\Lambda$--tableaux associated to the multitableau $\bs$.}\label{122}
\end{figure}
\end{rem}

\subsection{Diagrammatic realization of $bH_n(q)$}\label{111}

Here we study the boxed ramified monoid associated with the symmetric group, $\BR(\S_n)$, which can be regarded as a basis for the algebra $bH_n(q)$. Specifically, we get that $bH_n(q)$ is a $q$--deformation of the monoid algebra generated by $\BR(\S_n)$ (Theorem \ref{017}), providing $bH_n(q)$ with a structure of diagram algebra. Also, we show that the center of $\BR(\S_n)$ coincides with the monoid of compositions of $n$ (Proposition \ref{089}).

It is well known that $|\S_m|=m!$ for all integer $m\geq1$ \cite[A356634]{OEIS}. So, similarly as it was done in Proposition \ref{013} and Remark \ref{021}, we have\[|\BR(\S_n)|=\sum_{(\mu_1,\ldots,\mu_k)}\mu_1!\cdots\mu_k!=(1,3,11,47,231,1303,8431,62391,524495,4960775,\ldots),\]where $(\mu_1,\ldots,\mu_k)$ is a composition of $n$. See \cite[A051296]{OEIS}.

For each $i\in[n-1]$, define $z_i$ to be the ramified partition $e_is_i$ in $\R(\S_n)$, which is represented as in Figure \ref{015}. Due to the relations that define $\R(S_n)$, the elements $z_i$ satisfy the following relations:\begin{gather}
z_iz_jz_i=z_jz_iz_j,\quad|i-j|=1;\qquad z_iz_j=z_jz_i,\quad|i-j|>1;\qquad e_iz_j=z_je_i;\label{022}\\
z_i^2=e_i;\qquad e_iz_i=z_i.\label{026}
\end{gather}
\begin{figure}[H]
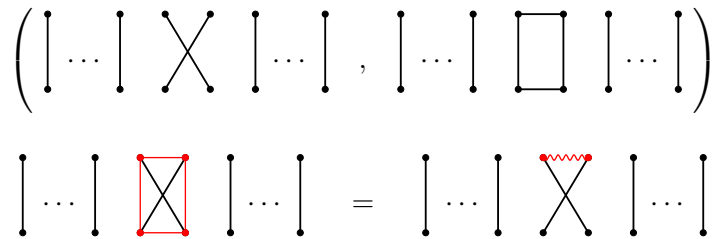
\figele\caption{Generator $z_i=e_is_i$.}\label{015}\end{figure}
\begin{figure}[H]\figsvt\caption{Some relations in $\BR(\S_n)$.}\label{039}\end{figure}

\begin{pro}\label{025}
The monoid $\BR(\S_n)$ is generated by $e_1,\ldots,e_n$ and $z_1,\ldots,z_{n-1}$.
\end{pro}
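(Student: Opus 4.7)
The proof strategy is to show that an arbitrary element $(\sigma,\bJ)\in\BR(\S_n)$ can be factored as a product of the generators $z_{i_1}\cdots z_{i_r}$ followed by a suitable product of $e_i$'s representing the tie part. I will exploit the semidirect product description $\R(\S_n)=\P_n\rtimes\S_n$ together with the boxed constraint $\sigma\preceq\bJ$.

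My first step is to describe the rigid structure of a generic $(\sigma,\bJ)\in\BR(\S_n)$. Since $\bJ$ is boxed, the top blocks of $\bJ$ form a linear set partition corresponding to a composition $\mu=(\mu_1,\ldots,\mu_k)\in\C_n$, whose parts are the interval lengths $\mu_j=b_j-a_j+1$ of the blocks $[a_j,b_j]$ of $\bJ\cap[n]$. The coarsening condition $\sigma\preceq\bJ$ then forces $\sigma$ to map each interval $[a_j,b_j]$ to itself, so $\sigma$ lies in the Young subgroup $\S_\mu\simeq\S_{\mu_1}\times\cdots\times\S_{\mu_k}$.

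Next, I would handle the tie part by writing $(1,\bJ)=\prod_{i\,:\,i\sim_\bJ i+1}e_i$, which is immediate from the identification (recalled in Section \ref{012}) of the submonoid of $\R(\S_n)$ generated by the $e_i$'s with the monoid $\LP_n\simeq\C_n$. This expresses the boxed right-hand component of any element of $\BR(\S_n)$ through the $e$-generators.

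For the last step I take a reduced expression $\sigma=s_{i_1}\cdots s_{i_r}$ inside $\S_\mu$, so that every $s_{i_j}$ satisfies $i_j\sim_\bJ(i_j+1)$, and argue by induction on $r$ that
\[(\sigma,\bJ)\;=\;z_{i_1}\cdots z_{i_r}\cdot(1,\bJ).\]
The inductive step reduces to the single identity $(s_i,b_i)(\tau,\bJ)=(s_i\tau,\bJ)$, valid whenever $s_i\in\S_\mu$; in the semidirect product picture this collapses to $\{i,i+1\}\vee s_i(\bJ\cap[n])=\bJ\cap[n]$, which in turn follows from the twin facts that $s_i$ stabilises $\bJ\cap[n]$ and that the block $\{i,i+1\}$ is already contained in $\bJ\cap[n]$. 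The main technical point, rather than an obstacle, is to keep the two parallel descriptions of $\R(\S_n)$ (as ramified partitions and as $\P_n\rtimes\S_n$) carefully aligned, so that the inductive identity is transparent in whichever picture one prefers.
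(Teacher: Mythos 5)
Your proof is correct and rests on the same mechanism as the paper's: once the boxed tie is in place, every simple transposition appearing in the permutation part can be absorbed via $e_is_i=z_i$, so the element factors as a word in the $z_i$'s times the product of $e_i$'s encoding $\bJ$. The only difference is bookkeeping --- the paper splits $(\bI,\bJ)$ into its over-product (boxed) factors and rewrites each full box times a permutation, whereas you argue globally using $\R(\S_n)=\P_n\rtimes\S_n$, the observation that $\sigma\in\S_\mu$, and induction on the word length --- which amounts to the same argument.
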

\begin{proof}
Let $(I,J)\in\BR(\S_n)$. We have $(I,J)=(\tilde{I}_1,\tilde{J}_1)/\cdots/(\tilde{I}_k,\tilde{J}_k)$ with $(\tilde{I}_i,\tilde{J}_i)\in\R(\CC_{m_i})$ for all $i\in[k]$, where $I=\tilde{I}_1/\cdots/\tilde{I}_k$, and $J=\tilde{J}_1/\cdots/\tilde{J}_k$ is the boxed decompositions of $J$. Thus, for each $i\in[k]$, we have $(\tilde{I}_i,\tilde{J}_i)=(e_1\cdots e_{m_i})p_i$, where $p_i\in\S_{m_i}$. In general, if $s=s_{i_1}\cdots s_{i_q}$ is a permutation in $\S_m$, we have $(e_1\cdots e_m)s=(e_1\cdots e_m)s_{i_1}\cdots s_{i_q}=(e_1\cdots e_m)(e_{i_1}s_{i_1})\cdots(e_{i_q}s_{i_q})=(e_1\cdots e_m)z_{i_1}\cdots z_{i_q}$ because of \eqref{026}. Therefore $\BR(\S_n)$ is generated by $e_1,\ldots,e_n$, $z_1,\ldots,z_{n-1}$.
\end{proof}

\begin{thm}\label{017}
The monoid $\BR(\S_n)$ is presented by generators $e_1,\ldots,e_{n-1}$ satisfying \eqref{023}, and generators $z_1,\ldots,z_{n-1}$ subject to \eqref{022} and \eqref{026}.
\end{thm}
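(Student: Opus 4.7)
The plan is to exhibit a normal form for words in the abstract monoid $\tilde M$ defined by the presentation, count these normal forms, and match the count with $|\BR(\S_n)|$. Combined with the surjection $\pi\colon\tilde M\twoheadrightarrow\BR(\S_n)$ provided by Proposition \ref{025} (after noting that the diagrammatic generators $e_i,z_i\in\BR(\S_n)$ satisfy the claimed relations, as was already observed prior to Figure \ref{039}), this will force $\pi$ to be an isomorphism.

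First I would compute $|\BR(\S_n)|$: every element is a ramified partition $(\bI,\bJ)$ with $\bI\in\S_n$ and $\bJ$ boxed; the boxed condition identifies $\bJ$ with a composition $\mu\in\C_n$, while the coarsening condition $\bJ\succeq\bI$ forces the underlying permutation of $\bI$ to lie in the Young subgroup $\S_\mu$. Therefore $|\BR(\S_n)|=\sum_{(\mu_1,\ldots,\mu_k)\in\C_n}\mu_1!\cdots\mu_k!$.

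Next, mirroring Lemma \ref{042} and the proof of Theorem \ref{130}, I would show that every element of $\tilde M$ equals one of the form $\rE_\bJ z_w$ with $\bJ\in\LP_n$ and $w\in\S_{\|\bJ\|}$. The commutation $e_iz_j=z_je_i$ from \eqref{022} lets me push all $e_i$ factors to the left, writing any word as $\rE_\bI\cdot\bzz$ with $\rE_\bI$ as in \eqref{idemEA2} and $\bzz$ a word in the $z_j$'s. For every factor $z_j$ of $\bzz$ with $j\not\sim_\bI(j+1)$, the identity $z_j=e_jz_j$ from \eqref{026}, followed by sliding $e_j$ to the left and absorbing it into $\rE_\bI$, replaces $\bI$ by $\bI\vee e_j$. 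Iterating yields $\rE_\bJ\cdot\bzz'$ in which every remaining $z_j$ satisfies $j\sim_\bJ(j+1)$. Since $\rE_\bJ e_j=\rE_\bJ$ for such $j$, the relation $z_j^2=e_j$ becomes $(\rE_\bJ z_j)^2=\rE_\bJ$, so together with the braid relations in \eqref{h2} the symbols $\rE_\bJ z_j$ satisfy the Coxeter presentation of $\S_{\|\bJ\|}$, and Matsumoto's theorem guarantees that $\rE_\bJ z_w$ is well defined for every $w\in\S_{\|\bJ\|}$.

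Counting the normal forms gives at most $\sum_{\mu\in\C_n}\mu_1!\cdots\mu_k!$ elements in $\tilde M$, matching $|\BR(\S_n)|$ exactly. Together with the surjectivity of $\pi$ this forces $\pi$ to be an isomorphism. I expect the main technical point to be the Young--reduction step: verifying carefully that, after absorbing all separating $z_j$'s into the linear partition, the remaining $z_j$'s combine with $\rE_\bJ$ to realize a copy of $\S_{\|\bJ\|}$, so that $z_w$ is independent of the chosen reduced expression. Once this is in place the cardinality match makes the rest of the argument routine.
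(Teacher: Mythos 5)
Your argument is correct, but it takes a genuinely different route from the paper. The paper proves Theorem \ref{017} by presenting $\C_n\times\B_n^+$ and quotienting by the congruence $K$ generated by $(\sigma_i^2,e_i)$ and $(e_i\sigma_i,\sigma_i)$: surjectivity again comes from Proposition \ref{025}, and the reverse inclusion $\Ker(\varphi)\subseteq K$ is obtained from the uniqueness of the normal decomposition in the semidirect product $\R(\S_n)=\P_n\rtimes\S_n$, together with the observation that every relation of $\S_n$ used to pass between two minimal words can be lifted to the $z_i$'s because the required $e_i$'s are already present. You instead run a normal-form-plus-counting argument entirely inside the abstractly presented monoid: push the central $e_i$'s to the left, saturate with $z_j=e_jz_j$, observe that the elements $\rE_\bJ z_j$ with $j\sim_\bJ(j+1)$ are involutions relative to the local identity $\rE_\bJ$ satisfying the braid relations (so the submonoid they generate is a homomorphic image of the Coxeter group $\S_{\|\bJ\|}$ -- this is the precise statement you need; Matsumoto alone only treats reduced expressions, and your braid relations come from \eqref{022} rather than \eqref{h2}, a harmless labeling slip), and then match the resulting bound $|\tilde M|\le\sum_{\mu\in\C_n}\mu_1!\cdots\mu_k!$ against $|\BR(\S_n)|$, which you compute correctly (and which the paper also records via \cite[A051296]{OEIS}). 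Your approach buys a cleaner injectivity step -- no congruence bookkeeping and no lifting of $\S_n$-relations, at the price of using finiteness and the explicit cardinality formula -- and it closely parallels the algebra-level argument of Lemma \ref{042} and Theorem \ref{130}, with counting replacing the embedding into $\E_n(q)$; the paper's congruence argument, by contrast, does not need the cardinality of $\BR(\S_n)$ and produces the normal form of Remark \ref{075} directly, which is also a byproduct of your proof.
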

\begin{proof}
Observe that $\C_m\times\B_n^+$ is presented by generators $e_1,\ldots,e_{n-1}$ and $\sigma_1,\ldots,\sigma_{n-1}$ satisfying \eqref{023} and \eqref{024} together with the relation $e_i\sigma_j=\sigma_je_i$. So, Proposition \ref{025} and \eqref{022} imply that $\varphi:\C_n\times\B_n^+\to\BR(\S_n)$ defined by $e_i\mapsto e_i$ and $\sigma_i\mapsto z_i$ is an epimorphism. Now, let $K$ be the congruence closure generated by the pairs $(\sigma_i^2,e_i)$ and $(e_i\sigma_i,\sigma_i)$ for all $i\in[n-1]$. Observe that $K\subseteq\Ker(\varphi)$ because of \eqref{026}.

Consider $e,e'\in\C_n$ and $\sigma,\sigma'\in\B_n^+$ such that $\sigma=\sigma_{a_1}\cdots\sigma_{a_p}$, $\sigma'=\sigma_{b_1}\cdots\sigma_{b_q}$ and $\varphi(e\sigma)=\varphi(e'\sigma')$. Then, because of \eqref{026}, we have\begin{equation}\label{027}\varphi(e\sigma)=ee_{a_1}\cdots e_{a_p}s_{a_1}\cdots s_{a_p},\qquad \varphi(e'\sigma')=e'e_{b_1}\cdots e_{b_q}s_{b_1}\cdots s_{b_p}.\end{equation}Observe that $e\sigma\equiv_Kee_{a_1}\cdots e_{a_p}\sigma$ and $e'\sigma'\equiv_Ke'e_{b_1}\cdots e_{b_q}\sigma'$ because $(\sigma_i^2,e_i)\in K$. On the other hand, since $\R(\S_n)=\P_n\rtimes\S_n$, the decompositions of $\varphi(e\sigma)$ and $\varphi(e'\sigma')$ in \eqref{027} are unique, hence\[ee_{a_1}\cdots e_{a_p}=e'e_{b_1}\cdots e_{b_q},\qquad s_{a_1}\cdots s_{a_p}=s_{b_1}\cdots s_{b_q}.\]Note that each relation of $\S_n$ used to get $s_{b_1}\cdots s_{b_q}$ from $s_{a_1}\cdots s_{a_p}$ can be translated into a relation of $z_i$'s because if $s_i$ occurs above then $e_i$ occurs as well. Thus $e\sigma\equiv_Kee_{a_1}\cdots e_{a_p}\sigma\equiv_Ke'e_{b_1}\cdots e_{b_q}\sigma'\equiv_Ke'\sigma'$, and then $\Ker(\varphi)\subseteq K$. Therefore $\BR(\S_n)\simeq(\C_n\times\B_n^+)/K$, which proves the theorem.
\end{proof}

It is immediate from Theorem \ref{017} that $bH_n(q)$ is a $q$--deformation of the monoid algebra generated by $\BR(S_n)$. Thus, the algebra $bH_n(q)$ inherits the diagram combinatorics of $\BR(\S_n)$. See Figure \ref{121}.
\begin{figure}[H]
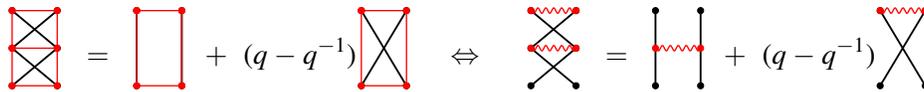

\figtwosix\caption{Relation \eqref{h4} in terms of diagrams.}\label{121}
\end{figure}

\begin{rem}\label{087}
By using Tietze transformations, and due to the fact that $e_i=z_i^2$, the monoid $\BR(\S_n)$ can be presented by generators $z_1,\ldots,z_{n-1}$, subject to the following relations:\[z_iz_jz_i=z_jz_iz_j,\quad|i-j|=1;\qquad z_iz_j=z_jz_i,\quad|i-j|>1;\qquad z_i^3=z_i;\qquad z_i^2z_j^2=z_j^2z_i^2;\qquad z_i^2z_j=z_jz_i^2.\]
\end{rem}

\begin{rem}[Normal form]\label{075}
As shown in the proofs of Proposition \ref{025} and Theorem \ref{017}, every element $g\in\BR(\S_n)$ can be uniquely written as $ez$ with $e\in\C_n$ and $z=z_{i_1}\cdots z_{i_k}$, where $es_{i_1}\cdots s_{i_k}$ is a normal decomposition given by the semidirect product structure of $\R(\S_n)$. Moreover, as $e$ is formed by blocks, then $g=ze$ as well. See Figure \ref{076}.
\end{rem}
\begin{figure}[H]
\figtwothr\caption{Normal form of $e_1e_2e_3s_2s_1s_3s_2s_3=e_1e_2e_3z_2z_1z_3z_2z_3=z_2z_1z_3z_2z_3e_1e_2e_3$.}
\label{076}
\end{figure}

\begin{pro}\label{089}
We have $Z(\BR(\S_n))=\C_n$ for all $n\geq3$.
\end{pro}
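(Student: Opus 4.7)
The inclusion $\C_n\subseteq Z(\BR(\S_n))$ has already been noted in Remark \ref{043}, so it remains to prove the reverse. My plan is to exploit the semidirect product structure $\R(\S_n)=\P_n\rtimes\S_n$ together with the normal form of Remark \ref{075}: every $g\in\BR(\S_n)$ corresponds uniquely to a pair $(e,w)$ with $e\in\C_n$ and $w\in\S_e$ (the Young subgroup stabilising $e$), and the multiplication is $(e_1,w_1)(e_2,w_2)=(e_1\vee w_1\cdot e_2,\,w_1w_2)$, where $\cdot$ denotes the natural action of $\S_n$ on $\P_n$ by permuting elements.

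Take $g=(e,w)\in Z(\BR(\S_n))$. Since each $e_j$ lies in $\C_n$ and is therefore already central, centrality of $g$ is equivalent to $gz_i=z_ig$ for every $i\in[n-1]$, where $z_i=(e_i,s_i)$. Working out both sides in the semidirect product gives
\[gz_i=(e\vee e_{w(i),w(i+1)},\,ws_i),\qquad z_ig=(e_i\vee s_i\cdot e,\,s_iw).\]
I would then check that the first components agree automatically, by splitting into two cases: if $i,i+1$ belong to the same block of $e$, then $s_i\in\S_e$ fixes $e$ and $w(i),w(i+1)$ remain in that block, so both first components collapse to $e$; if $i,i+1$ belong to distinct blocks $B_1,B_2$ of $e$, these blocks must be adjacent intervals (because $e$ is linear), and a direct check shows that both $e\vee e_{w(i),w(i+1)}$ and $e_i\vee s_i\cdot e$ reduce to the linear partition obtained from $e$ by merging $B_1\cup B_2$ into the single block $B_1\cup B_2$.

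Once the first components are seen to coincide in both cases, comparing second components yields $ws_i=s_iw$ for every $i\in[n-1]$. The adjacent transpositions generate $\S_n$, so $w\in Z(\S_n)$, and the hypothesis $n\geq3$ then forces $w=1$. Hence $g=(e,1)=e\in\C_n$, completing the argument.

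The only genuinely delicate point is the case analysis for the first components in case~(ii); the linearity of $e$ (i.e.\ the interval-block structure) is exactly what guarantees that two \emph{a priori} different joins become the same linear partition. The rest is routine bookkeeping in the semidirect product.
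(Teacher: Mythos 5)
Your argument is correct, and it rests on the same two pillars as the paper's proof: the inclusion $\C_n\subseteq Z(\BR(\S_n))$ from Remark \ref{043}, and the reduction of the reverse inclusion to $Z(\S_n)=\{1\}$ for $n\geq3$. The execution, however, is different. The paper works from the presentation (Theorem \ref{017}): it defines the epimorphism $\phi:\BR(\S_n)\to\S_n$, $z_i\mapsto s_i$, $e_i\mapsto1$, uses that a surjective morphism carries the center into the center to get $\phi(g)=1$, and then invokes the normal form $g=ez$ of Remark \ref{075} to conclude $g\in\C_n$. You instead argue inside the concrete model, identifying $\BR(\S_n)$ with pairs $(e,w)$ where $e\in\C_n$ and $w\in\S_{\|e\|}$, and comparing $gz_i$ with $z_ig$ componentwise; this is closer in spirit to the paper's proof of Proposition \ref{088} for $\R(\S_n)$ than to its proof of the present statement. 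Two remarks. First, your multiplication rule $(e_1,w_1)(e_2,w_2)=(e_1\vee w_1\cdot e_2,\,w_1w_2)$ is the general semidirect formula in $\R(\S_n)=\P_n\rtimes\S_n$; since $w_1$ preserves the blocks of $e_1$ one has $e_1\vee (w_1\cdot e_2)=e_1\vee e_2$, which is exactly what your case analysis verifies. Second, that case analysis, while correct (linearity of $e$ does make the two joins coincide), is dispensable for the proof: equality of two ramified partitions already forces equality of their permutation components, so $gz_i=z_ig$ yields $ws_i=s_iw$ directly, and that is all you need; the agreement of the first components is then automatic. With or without that shortcut, your proof is complete.
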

\begin{proof}
Due to Theorem \ref{017} and the relations that define $\S_n$, we conclude that the map $\phi:\BR(\S_n)\to\S_n$ sending $z_i\mapsto s_i$ and $e_i\mapsto1$ is a monoid epimorphism. So, $\phi(g)\in Z(\S_n)=\{1\}$ for all $g\in Z(\BR(\S_n))$. Observe that $\phi(\C_n)=\{1\}$ and $\C_n\subseteq\BR(\S_n)$ as mentioned in Remark \ref{043}. On the other hand, as each relation in \eqref{032} can be obtained from one in \eqref{022} and \eqref{026}, then every element of $\BR(\S_n)$ can be written as a product $ez$, where $e\in\C_n$ and $z=z_{i_1}\cdots z_{i_k}$ such that $\phi(z)=s_{i_1}\cdots s_{i_k}$ is a reduced expression in $\S_n$. This implies that $\phi(g)=1$ for some $g\in\BR(\S_n)$ if and only if $g\in\C_n$. Therefore $Z(\BR(\S_n))=\C_n$.
\end{proof}

\begin{rem}
Just as the symmetric group can be obtained by imposing the relations $\sigma_i^2=1$ in the braid group, the ramified symmetric monoid can be obtained in this manner from the tied braid monoid. It is expected that a monoid playing this role for $\BR(\S_n)$ is the submonoid $bT\B_n$ of $T\B_n$ generated by $\tau_i:=e_i\sigma_i$ and $\ttau_i:=e_i\sigma_i^{-1}$ for all $i\in[n-1]$, together with $e_1,\ldots,e_{n-1}$ satisfying \eqref{023}. This submonoid will be called the \emph{tied--boxed braid monoid}. Observe that, due to \eqref{040} and \eqref{041}, the generators $\tau_i$ and $\ttau_i$ satisfy the relations below. It is an open problem to check if these relations are sufficient to give a presentation.
\begin{gather*}
\tau_i\tau_j\tau_i=\tau_j\tau_i\tau_j,\quad\ttau_i\ttau_j\ttau_i=\ttau_j\ttau_i\ttau_j,\quad|i-j|=1;\\
\tau_i\ttau_i=e_i=\ttau_i\tau_i;\qquad e_i\tau_j=\tau_je_i;\qquad e_i\ttau_j=\ttau_je_i;\qquad e_i\tau_i=\tau_i;\qquad e_i\ttau_i=\ttau_i.
\end{gather*}
\end{rem}

\subsection{The singular part of $\R(\S_n)$}\label{112}

As mentioned in Remark \ref{043}, $\BR(\S_n)\backslash\{1\}$ is a subsemigroup of the singular part of $\R(\S_n)$. Hence, the monoid $s\R(\S_n)^1$ can be regarded as an extension of $\BR(\S_n)$. Here, we study the singular part of the ramified symmetric monoid, in particular we give a presentation for it (Theorem \ref{055}).

In this subsection, for each $r_1,\ldots,r_k\in[n-1]$ and each $i\in[n]$, we will denote the image of $i$ under the permutation $s_{r_1}\cdots s_{r_k}$ by $(r_1\circ\cdots\circ r_k)(i)$ instead.

Observe that $s\R(\S_n)=\{(I,J)\mid I\in\S_n,\,I\prec J\}$ and $|s\R(\S_n)|=n!\b_n-n!=n!(\b_n-1)$.
See Figure \ref{050}.\begin{figure}[H]\figtty\caption{The $24$ elements of $s\R(\S_3)$.}\label{050}\end{figure}

For each $i,j,r\in[n-1]$ with $j<k$, we define $z^r_{i,j}=e_{i,j}s_r$. See Figure \ref{044}. Note that $z^i_{i,i+1}=z_i$. Also, there are $\frac{n^2(n-1)}{2}$ \cite[A006002]{OEIS} of these elements, and each $z_{i,j}^r$ belongs to $s\R(\S_n)$ due to occurrence of $e_{i,j}$.
\begin{figure}[H]
\figetn\caption{Elements $z^2_{1,4}$ and $z^1_{2,4}$ in $\R(\S_n)$.}\label{044}
\end{figure}

\begin{pro}\label{056}
The semigroup $s\R(\S_n)$ is generated by the elements $z^r_{i,j}$ and $e_{i,j}$.
\end{pro}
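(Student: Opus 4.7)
The plan is to use the semidirect product decomposition $\R(\S_n)=\P_n\rtimes\S_n$ from \cite[Theorem 19]{AiArJu23} and prove the claim by induction on the Coxeter length $\ell(s)$ of the permutation part. Every element $g \in s\R(\S_n)$ admits a unique normal form $g = e\cdot s$ with $e\in\P_n$ and $s\in\S_n$, and the condition $g \in s\R(\S_n)$ is exactly the condition $e\neq 1$ (since the units of $\R(\S_n)$ form $\S_n$). Throughout I will use the commutation rule generalizing \eqref{048}, namely $s\cdot e_{i,j}=e_{s(i),s(j)}\cdot s$ for all $s\in\S_n$, from which one derives
\[
e_{i,j}\cdot s\;=\;s\cdot e_{s^{-1}(i),s^{-1}(j)}\qquad\text{for all }s\in\S_n.
\]

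For the base case $\ell(s)=0$ I write $g=e$, and since $e\neq1$ the presentation \eqref{047} of $\P_n$ gives a nonempty product $e=e_{i_1,j_1}\cdots e_{i_k,j_k}$ of generators, which is already a product of the allowed generators. For the inductive step, assume the statement for all permutations of length strictly less than $\ell(s)\geq 1$. The key observation is that, because $e\neq1$, at least one non-singleton block of $e$ exists, so there is a pair $\{a,b\}$ with $e_{a,b}\preceq e$. Since $\preceq$ is the refinement order on $\P_n$ with $\vee$ as meet-in-this-order product, this gives the idempotence identity $e\cdot e_{a,b}=e$.

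Now I would write a reduced decomposition $s = s'\cdot s_r$ with $\ell(s')=\ell(s)-1$ and compute
\[
g \;=\; e\cdot s \;=\; e\cdot e_{a,b}\cdot s'\cdot s_r \;=\; e\cdot s'\cdot e_{a',b'}\cdot s_r \;=\; (e\cdot s')\cdot z^{\,r}_{a',b'},
\]
where $(a',b')=(s'^{-1}(a),s'^{-1}(b))$ (reordered if needed so that $a'<b'$, using $e_{i,j}=e_{j,i}$). The partition part of $e\cdot s'$ is again $e\neq 1$, so $e\cdot s'\in s\R(\S_n)$, and $\ell(s')<\ell(s)$, so the inductive hypothesis expresses $e\cdot s'$ as a product of generators $e_{i,j}$ and $z^{r'}_{i',j'}$. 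Appending the factor $z^{\,r}_{a',b'}$ then gives the required decomposition of $g$, completing the induction.

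I do not expect a genuine obstacle in this argument; the only delicate point is verifying the commutation formula pushes $e_{a,b}$ past an arbitrary permutation $s'$ with the expected index shift, which follows directly from iterating \eqref{048}. A small cosmetic issue is that the generators $z^{\,r}_{i,j}$ and $e_{i,j}$ are parametrized with $i<j$, but since $e_{i,j}=e_{j,i}$ one is free to reorder the indices whenever necessary, so this causes no difficulty.
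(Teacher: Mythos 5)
Your proof is correct and takes essentially the same route as the paper: both start from the unique factorization $g=es$ in $\R(\S_n)=\P_n\rtimes\S_n$ with partition part $e\neq1$, absorb an idempotent tie generator $e_{a,b}\preceq e$ using $e\,e_{a,b}=e$, and push it through the permutation via (iterated) \eqref{048} so that it pairs with a simple transposition to form a generator $z^r_{i,j}$. The paper carries out this rewriting in one closed formula along a reduced word for $s$ (a form it reuses later as the normal form of Remark \ref{083}), whereas you organize the same mechanism as an induction on $\ell(s)$; the difference is purely presentational.
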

\begin{proof}
Since $s\R(\S_n)$ is a subsemigroup of $\R(S_n)=\P_n\rtimes\S_n$, every element $g\in s\R(\S_n)$ can be uniquely written as a product $es$, where $e\in\P_n$ and $s\in\S_n$. Let $e_{p_1,q_1}\cdots e_{p_h,q_h}$ be the normal form of $e$ as in \cite[Proposition 3.3]{ArJu21}, and let $s_{r_1}\cdots s_{r_k}$ be a word of minimal length representing $s$. Then, we have\begin{equation}\label{068}g=e_{p_1,q_1}\cdots e_{p_h,q_h}s_{r_1}\cdots s_{r_k}=e_{p_2,q_2}\cdots e_{p_h,q_h}(e_{\bar{p}_1,\bar{q}_1}s_{r_1})\cdots(e_{\bar{p}_k,\bar{q}_k}s_{r_k})=e_{p_2,q_2}\cdots e_{p_h,q_h}z_{\bar{p}_1,\bar{q}_1}^{r_1}\cdots z_{\bar{p}_k,\bar{q}_k}^{r_k},\end{equation}where $e_{\bar{p}_i,\bar{q}_i}=s_{r_{i-1}}\cdots s_{r_1}e_{p_1,q_1}s_{r_1}\cdots s_{r_{i-1}}$. Therefore $s\R(\S_n)$ is generated by the elements $z^r_{i,j}$ and $e_{i,j}$.
\end{proof}

\begin{rem}[Normal form]\label{083}
Observe that the word $u=e_{p_2,q_2}\cdots e_{p_h,q_h}z_{\bar{p}_1,\bar{q}_1}^{r_1}\cdots z_{\bar{p}_k,\bar{q}_k}^{r_k}$ representing the element $g$ in \eqref{068} is uniquely defined up to the choice of the minimal length word $v=s_{r_1}\cdots s_{r_k}$. We may assume that $v$ is chosen in some minimal length normal form of $s$. Thus, $u$ will be a normal form for $g$. See Figure \ref{072} for an instance.
\end{rem}
\begin{figure}[H]
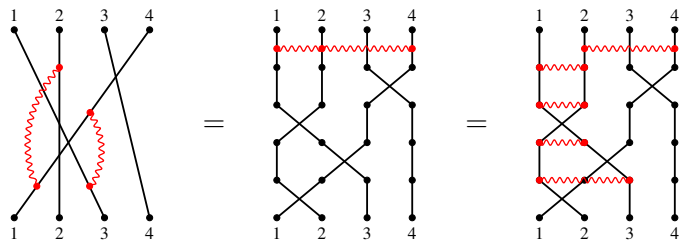
\figtwotwo\caption{Normal form of $e_{1,2}e_{2,4}s_3s_1s_2s_1=e_{2,4}z_{1,2}^3z_{1,2}^1z_{1,2}^2z_{1,3}^1$.}\label{072}\end{figure}

Due to the fact that $\R(\S_n)=\P_n\rtimes\S_n$ and the relations that define it, we can show that the generators $z_{i,j}^r$ satisfy the following relations:\begin{gather}
z^r_{i,j}z^t_{r(i),r(j)}z^r_{(t\circ r)(i),(t\circ r)(j)}=z^t_{i,j}z^r_{t(i),t(j)}z^t_{(r\circ t)(i),(r\circ t)(j)},\quad|r-t|=1;\label{051}\\
z^r_{i,j}z^t_{r(i),r(j)}=z^t_{i,j}z^r_{t(i),t(j)},\quad|r-t|>1;\label{052}\\
z^r_{i,j}z^r_{h,k}=e_{i,j}e_{r(h),r(k)};\qquad e_{i,j}z^r_{i,j}=z^r_{i,j};\qquad z^r_{i,j}e_{h,k}=e_{r(h),r(k)}z^r_{i,j}.\label{053}
\end{gather}Observe that \eqref{051} and \eqref{052} are inherited from the braid relations that the generators $s_i$ hold. See Figure \ref{054}. Furthermore, \eqref{053} implies it is enough to consider the generators $z^r_{i,j}$ because $e_{i,j}=e_{i,j}^2=z^r_{i,j}z^r_{r(i),r(j)}$.
\begin{figure}[H]
\figtwoone\caption{Relations $z^1_{2,3}z^2_{1,3}z^1_{1,2}=z^2_{2,3}z^1_{2,3}z^2_{1,3}$ and $z^1_{2,4}z^2_{1,4}=z^2_{2,4}z^1_{2,3}$.}\label{054}
\end{figure}

The main goal of this subsection is to prove the following result.

\begin{thm}\label{055}
The semigroup $s\R(\S_n)$ is presented by generators $e_{i,j}$ satisfying \eqref{047}, and generators $z^r_{i,j}$, subject to the relations \eqref{051} to \eqref{053}.
\end{thm}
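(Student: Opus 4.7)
The plan is to show that the natural semigroup homomorphism from the abstractly presented semigroup onto $s\R(\S_n)$ is an isomorphism by introducing a normal form matching the one for $s\R(\S_n)$ described in Remark \ref{083}.

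Let $S$ denote the semigroup defined by the presentation in the statement, with abstract generators $\hat{e}_{i,j}$ and $\hat{z}^{\,r}_{i,j}$. Since the relations \eqref{047} and \eqref{051}--\eqref{053} are all satisfied in $s\R(\S_n)$, there is a semigroup homomorphism $\varphi:S\to s\R(\S_n)$, and its surjectivity is exactly Proposition \ref{056}. It remains to prove injectivity.

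First, using the second and third relations in \eqref{053}, namely $\hat{e}_{i,j}\hat{z}^{\,r}_{i,j}=\hat{z}^{\,r}_{i,j}$ and $\hat{z}^{\,r}_{i,j}\hat{e}_{h,k}=\hat{e}_{r(h),r(k)}\hat{z}^{\,r}_{i,j}$, any word in the generators of $S$ can be reshaped so that all $\hat{e}$--factors precede all $\hat{z}$--factors, producing a word of the form $\hat{e}_{p_2,q_2}\cdots\hat{e}_{p_h,q_h}\hat{z}^{\,r_1}_{\bar p_1,\bar q_1}\cdots\hat{z}^{\,r_k}_{\bar p_k,\bar q_k}$. The leftmost block can then be put into the normal form of $\P_n$ from \cite[Proposition 3.3]{ArJu21} using only the relations \eqref{047}.

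Next, the rightmost block $\hat{z}^{\,r_1}_{\bar p_1,\bar q_1}\cdots\hat{z}^{\,r_k}_{\bar p_k,\bar q_k}$ encodes the permutation $s_{r_1}\cdots s_{r_k}\in\S_n$, with subscripts $(\bar p_m,\bar q_m)$ forced by the preceding factors. Relations \eqref{051} and \eqref{052} translate, under $\varphi$, into the braid moves on the indices $r_1,\ldots,r_k$ with the required tracking of partition indices, while the first relation in \eqref{053}, $\hat{z}^{\,r}_{i,j}\hat{z}^{\,r}_{h,k}=\hat{e}_{i,j}\hat{e}_{r(h),r(k)}$, implements the quadratic cancellation $s_r^2=1$ and produces new $\hat{e}$--factors that can be shifted into the leftmost block using the commutation established in the previous step. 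By Matsumoto's theorem, any two expressions in the $s_r$'s representing the same permutation are related by braid moves and quadratic cancellations, so the word in the $\hat{z}$'s can be reduced to the fixed minimal-length expression chosen in Remark \ref{083}. Absorbing the first generator $\hat{e}_{p_1,q_1}$ into the leading $\hat{z}$--factor through $\hat{e}_{i,j}\hat{z}^{\,r}_{i,j}=\hat{z}^{\,r}_{i,j}$ matches the indexing convention of Remark \ref{083}.

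Combining these reductions, every element of $S$ admits a normal form that matches, letter by letter, the normal form of its image in $s\R(\S_n)$. Consequently, if $u,u'\in S$ satisfy $\varphi(u)=\varphi(u')$, their normal forms must coincide, forcing $u=u'$, so $\varphi$ is injective and therefore an isomorphism. The principal obstacle will be the second step: one must check carefully that each braid move or quadratic cancellation on the permutation part is realised by an honest sequence of applications of \eqref{051}--\eqref{053} with consistent subscript updates, and that every extra $\hat{e}$--factor generated by a quadratic cancellation can always be transported into the leftmost block without creating further obstructions. The particular way the subscripts in \eqref{051} and \eqref{052} are shifted by the preceding permutation is precisely what makes this bookkeeping work.
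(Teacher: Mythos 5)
Your strategy---surjectivity from Proposition \ref{056} plus injectivity via a normal form in the abstract semigroup that matches the normal form of Remark \ref{083}---is exactly the route the paper takes, but the step you explicitly defer as ``the principal obstacle'' is not a routine verification that can be left open: it is the mathematical core of the argument, and as written your proof has a gap there. The relations \eqref{051} and \eqref{052} apply only to consecutive $\tz$--factors whose subscripts are linked in the specific way displayed there (the second and third subscript pairs are images of the first under the relevant permutations), whereas in an arbitrary word of $S$ consecutive $\tz$--factors carry unrelated subscripts; likewise, reducing the $\tz$--part to the particular minimal-length word fixed in Remark \ref{083} requires braid moves on words whose subscripts do not match \eqref{051}--\eqref{052} literally. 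The paper closes precisely this hole with Lemma \ref{065}: one first derives $\te_{i,j}\tz^r_{h,k}=\te_{h,k}\tz^r_{i,j}$ from \eqref{053}, which, in the presence of the $\te$--prefix your reduction creates, lets the subscripts of any $\tz$--factor be re-chosen at will, and from this one obtains braid-type relations with arbitrary subscripts, i.e.\ the subscript-independent relations \eqref{066} and \eqref{067}. Only then does the reduction to a single chosen representative (Proposition \ref{070}) go through, and injectivity follows. Your proposal names this bookkeeping but does not perform it, so the isomorphism is not actually established.

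Two smaller points. The statement you invoke---that any two words in the $s_r$'s representing the same permutation are connected by braid moves and quadratic cancellations---is not Matsumoto's theorem (which concerns reduced expressions and braid moves only); it is simply the fact that \eqref{032} is a presentation of $\S_n$, which is what is really used. Also, the quadratic cancellations are the reason one cannot restrict to reduced words from the outset: each application of the first relation in \eqref{053} creates new $\te$--factors, and one must check (again via Lemma \ref{065}) that transporting them to the left and renormalizing never obstructs the subsequent braid moves; this is exactly why the paper establishes the subscript-independent relations before attempting any reduction.
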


\begin{rem}\label{069}
Observe that $gh,hg\in s\R(\S_n)$ for all $g\in\S_n$ and $h\in s\R(\S_n)$. Hence $s\R(\S_n)$ is an ideal of the monoid $\R(\S_n)$. So, another manner to show a presentation for $s\R(\S_n)$ can be obtained by applying the Reidemeister--Schreier type rewriting for ideals given in \cite{Ru95,CaRoRuTh95}.
\end{rem}

\subsubsection{Proof of Theorem \ref{055}}\label{071}

Let $S$ be the semigroup presented by generators ${\tt e}_{i,j}$ and ${\tt z}^r_{i,j}$ with $r\in[n-1]$ and $i,j\in[n]$ such that $i<j$, subject to the following relations:\begin{gather}
\te_{i,j}^2=\te_{i,j},\quad\te_{i,j}\te_{r,s}=\te_{r,s}\te_{i,j},\quad\te_{i,j}\te_{i,k}=\te_{i,j}\te_{j,k}=\te_{i,k}\te_{j,k};\label{058}\\
\tz^r_{i,j}\tz^t_{r(i),r(j)}\tz^r_{(t\circ r)(i),(t\circ r)(j)}=\tz^t_{i,j}\tz^r_{t(i),t(j)}\tz^t_{(r\circ t)(i),(r\circ t)(j)},\quad|r-t|=1;\label{059}\\
\tz^r_{i,j}\tz^t_{r(i),r(j)}=\tz^t_{i,j}\tz^r_{t(i),t(j)},\quad|r-t|>1;\label{060}\\
\tz^r_{i,j}\tz^r_{h,k}=\te_{i,j}\te_{r(h),r(k)};\qquad\te_{i,j}\tz^r_{i,j}=\tz^r_{i,j};\qquad\tz^r_{i,j}\te_{h,k}=\te_{r(h),r(k)}\tz^r_{i,j}.\label{061}
\end{gather}

As relations \eqref{047} and \eqref{051}--\eqref{053} hold in $s\R(\S_n)$, the map $\phi$ sending $\te_{i,j}\mapsto e_{i,j}$ and $\tz_{i,j}^r\mapsto z_{i,j}^r$ for all $r\in[n-1]$ and $i,j\in[n]$ with $i<j$, defines a semigroup epimorphism from $S$ to $s\R(\S_n)$.

\begin{lem}\label{065}
The following relations hold in $S$.
\begin{enumerate}
\item $\te_{i,j}\tz_{h,k}^r=\te_{h,k}\tz_{i,j}^r$.\label{062}
\item $\tz^r_{i,j}\tz^t_{h,k}\tz^r_{p,q}=\tz^t_{i,j}\tz^r_{(t\circ r)(h),(t\circ r)(k)}\tz^t_{(t\circ r)(p),(t\circ r)(q)}$ if $|r-t|=1$.\label{063}
\item $\tz_{i,j}^r\tz_{h,k}^t=\tz_{i,j}^t\tz_{(t\circ r)(h),(t\circ r)(k)}^r$ if $|r-t|>1$.\label{064}
\end{enumerate}
\end{lem}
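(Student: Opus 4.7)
The plan is to derive each of (1), (2), (3) from the defining relations \eqref{058}--\eqref{061} of $S$ using a common strategy: insert idempotents $\te$ with carefully chosen indices, shuffle them past $\tz$-generators via the commutation rule $\tz^r_{a,b}\te_{c,d}=\te_{s_r(c),s_r(d)}\tz^r_{a,b}$ coming from \eqref{061}, and then invoke one of the already available relations \eqref{059} or \eqref{060} at the right moment. For brevity I will write $s_r(i)$ for the image of $i$ under the transposition, following the convention $(t\circ r)(i)=s_t(s_r(i))$ used in the statement.

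The key auxiliary identity that unlocks the argument is obtained by specialising the first relation of \eqref{061} to $(h,k)=(s_r(i),s_r(j))$: since $s_r^2=\mathrm{id}$, it reads $\tz^r_{i,j}\tz^r_{s_r(i),s_r(j)}=\te_{i,j}$, and combined with $\te_{i,j}\tz^r_{i,j}=\tz^r_{i,j}$ it also gives $\tz^r_{i,j}=\tz^r_{i,j}\te_{s_r(i),s_r(j)}$. These two forms let me freely prepend or append an appropriate idempotent to any $\tz^r_{a,b}$. Item (1) then follows quickly: rewriting $\te_{i,j}\tz^r_{h,k}=\tz^r_{i,j}\tz^r_{s_r(i),s_r(j)}\tz^r_{h,k}$, I collapse the last two factors using the first relation of \eqref{061} into a product of two idempotents, and then move them back to the right across $\tz^r_{i,j}$ via the last relation of \eqref{061}, landing exactly on $\te_{h,k}\tz^r_{i,j}$.

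For (3), the plan is to insert $\te_{s_r(i),s_r(j)}$ between $\tz^r_{i,j}$ and $\tz^t_{h,k}$, apply (1) to swap its indices with those of $\tz^t_{h,k}$, commute the resulting $\te_{h,k}$ across $\tz^r_{i,j}$, invoke \eqref{060} on the newly produced expression $\tz^r_{i,j}\tz^t_{s_r(i),s_r(j)}$, and finally push the stray idempotent back to the right, where it is absorbed through $\tz^t_{i,j}\te_{s_t(i),s_t(j)}=\tz^t_{i,j}$; what remains is precisely the right-hand side of (3).

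Item (2) is the main case and is handled by the same insert-swap-commute procedure applied now to both the second and the third factor of $\tz^r_{i,j}\tz^t_{h,k}\tz^r_{p,q}$: after the manipulation, the left-hand side is brought to the form $\te_{s_r(h),s_r(k)}\,\te_{s_rs_t(p),s_rs_t(q)}\cdot\tz^r_{i,j}\tz^t_{s_r(i),s_r(j)}\tz^r_{s_ts_r(i),s_ts_r(j)}$, and the analogous manipulation of the right-hand side yields the same prefactor of $\te$'s multiplying $\tz^t_{i,j}\tz^r_{s_t(i),s_t(j)}\tz^t_{s_rs_t(i),s_rs_t(j)}$; a single application of \eqref{059} then equates the two. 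The main obstacle I expect is the bookkeeping of indices: the stray idempotents produced on each side arise via long compositions of $s_r$ and $s_t$, so their literal agreement in $S$ (and not merely in the image $s\R(\S_n)$) has to be verified by repeated use of the braid relation $s_rs_ts_r=s_ts_rs_t$ and its consequence $s_rs_ts_rs_t=s_ts_r$.
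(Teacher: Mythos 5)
Your plan is correct and follows essentially the same route as the paper: the paper likewise proves (1) by inserting idempotents via the relations in \eqref{061} (in the form $\tz^r_{i,j}\tz^r_{r(i),r(j)}=\te_{i,j}$) and commuting them across the $\tz$--generators, and then establishes (2) and (3) by the identical insert--swap--commute scheme, using (1) to transfer indices before and after invoking \eqref{059} and \eqref{060}. The only cosmetic differences are that the paper rewrites the left-hand side of (2) all the way into the right-hand side (rather than bringing both sides to a common form as you propose, which is logically equivalent), and that in (3) the final clean-up needs one further application of (1) to place the indices $(t\circ r)(h),(t\circ r)(k)$ on the last $\tz^r$ --- exactly the step your stated absorption identity $\tz^t_{i,j}\te_{t(i),t(j)}=\tz^t_{i,j}$ presupposes.
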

\begin{proof}
By using \eqref{061}, we show \eqref{062} as follows:\[\te_{i,j}\tz_{h,k}^r=\te_{i,j}\te_{h,k}\tz_{h,k}^r=\tz^r_{i,j}\tz^r_{r(h),r(k)}\tz_{h,k}^r=\tz^r_{i,j}\te_{r(h),r(k)}\te_{r(h),r(k)}=\tz^r_{i,j}\te_{r(h),r(k)}=\te_{h,k}\tz^r_{i,j}.\]To show \eqref{063} and \eqref{064}, we apply \eqref{059} and \eqref{060} together with \eqref{062} and \eqref{061}, as follows:\[\begin{array}{rcl}
\tz^r_{i,j}\tz^t_{h,k}\tz^r_{p,q}&=&\te_{i,j}^2\tz^r_{i,j}\tz^t_{h,k}\tz^r_{p,q},\\
&=&\tz^r_{i,j}\te_{r(i),r(j)}\tz^t_{h,k}\te_{(t\circ r)(i),(t\circ r)(j)}\tz^r_{p,q},\\
&=&\tz^r_{i,j}\te_{h,k}\tz^t_{r(i),r(j)}\te_{p,q}\tz^r_{(t\circ r)(i),(t\circ r)(j)},\\
&=&\te_{r(h),r(k)}\te_{(r\circ t)(p),(r\circ t)(q)}\tz^r_{i,j}\tz^t_{r(i),r(j)}\tz^r_{(t\circ r)(i),(t\circ r)(j)},\\
&=&\te_{r(h),r(k)}\te_{(r\circ t)(p),(r\circ t)(q)}\tz^t_{i,j}\tz^r_{t(i),t(j)}\tz^t_{(r\circ t)(i),(r\circ t)(j)},\\
&=&\tz^t_{i,j}\te_{(t\circ r)(h),(t\circ r)(k)}\tz^r_{t(i),t(j)}\te_{(t\circ r)(p),(t\circ r)(q)}\tz^t_{(r\circ t)(i),(r\circ t)(j)},\\
&=&\tz^t_{i,j}\te_{t(i),t(j)}\tz^r_{(t\circ r)(h),(t\circ r)(k)}\te_{(r\circ t)(i),(r\circ t)(j)}\tz^t_{(t\circ r)(p),(t\circ r)(q)},\\
&=&\te_{i,j}^2\tz^t_{i,j}\tz^r_{(t\circ r)(h),(t\circ r)(k)}\tz^t_{(t\circ r)(p),(t\circ r)(q)},\\
&=&\tz^t_{i,j}\tz^r_{(t\circ r)(h),(t\circ r)(k)}\tz^t_{(t\circ r)(p),(t\circ r)(q)}.
\end{array}
\begin{array}{rcl}
\tz_{i,j}^r\tz_{h,k}^t&=&\te_{i,j}\tz_{i,j}^r\tz_{h,k}^t,\\
&=&\tz_{i,j}^r\te_{r(i),r(j)}\tz_{h,k}^t,\\
&=&\tz_{i,j}^r\te_{h,k}\tz_{r(i),r(j)}^t,\\
&=&\te_{r(h),r(k)}\tz_{i,j}^r\tz_{r(i),r(j)}^t,\\
&=&\te_{r(h),r(k)}\tz_{i,j}^t\tz_{t(i),t(j)}^r,\\
&=&\tz_{i,j}^t\te_{(t\circ r)(h),(t\circ r)(k)}\tz_{t(i),t(j)}^r,\\
&=&\tz_{i,j}^t\te_{t(i),t(j)}\tz_{(t\circ r)(h),(t\circ r)(k)}^r,\\
&=&\te_{i,j}\tz_{i,j}^t\tz_{(t\circ r)(h),(t\circ r)(k)}^r,\\
&=&\tz_{i,j}^t\tz_{(t\circ r)(h),(t\circ r)(k)}^r.
\end{array}\]
\end{proof}

Parts \eqref{063} and \eqref{064} of Lemma \ref{065} imply that, independently of the subindices, the generators $\tz^r_{i,j}$ satisfy the braid relations with respect of their superindices, that is\begin{equation}\label{066}\tsz^r\tsz^t\tsz^r=\tsz^t\tsz^r\tsz^t,\quad|i-j|=1;\qquad\tsz^r\tsz^t=\tsz^t\tsz^r,\quad|i-j|>1;\end{equation}where the square is hiding the subindices of each generator. Similarly, relations in \eqref{061} can be written as:\begin{equation}\label{067}\tz_\square^r\tz_\square^r=\te_\square;\qquad \te_\square\tz_\square^r=\tz_\square^r;\qquad\tz_\square^r\te_\square=\te_\square\tz_\square^r;\end{equation}Thus, we obtain the following result.

\begin{crl}
The map $\phi_s:S\to\S_n$ sending $\te_{i,j}\mapsto1$ and $\tz_{i,j}^r\mapsto s_r$ defines a semigroup endomorphism.
\end{crl}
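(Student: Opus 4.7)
The plan is to verify, relation by relation, that the defining relations of $S$ are preserved by $\phi_s$; since $S$ is given by generators and relations, this is all that is required to extend $\phi_s$ to a well--defined semigroup homomorphism into $\S_n$.

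The key observation that makes the verification essentially mechanical is that all the $\te_{i,j}$ collapse to the identity of $\S_n$ under $\phi_s$. Consequently, the relations in \eqref{058} are mapped to trivial identities $1=1$ in $\S_n$, so there is nothing to check for them. Similarly, the second and third relations in \eqref{061} become $1\cdot s_r = s_r$ and $s_r\cdot 1 = 1\cdot s_r$, which hold trivially.

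The substantive checks are for the remaining relations, all of which reduce to standard facts about the $s_r$ in $\S_n$. The relation \eqref{059} is mapped to $s_r s_t s_r = s_t s_r s_t$ for $|r-t|=1$, which is the braid relation. The relation \eqref{060} is mapped to $s_r s_t = s_t s_r$ for $|r-t|>1$, the far commutativity relation. Finally, the first relation of \eqref{061} is mapped to $s_r s_r = 1\cdot 1 = 1$, which is the quadratic relation $s_r^2=1$ in $\S_n$. Since these are precisely (part of) the defining relations of $\S_n$ in the presentation \eqref{032}, every defining relation of $S$ holds in $\S_n$ after applying $\phi_s$.

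There is no real obstacle: the proof is purely a bookkeeping check that each of the relations \eqref{058}--\eqref{061} survives the collapse $\te_{i,j}\mapsto 1$, $\tz^r_{i,j}\mapsto s_r$. I would write it simply as a one--paragraph verification, possibly displaying the images of \eqref{059}, \eqref{060} and the first equation of \eqref{061} to make the correspondence with \eqref{032} visible to the reader, and noting that the remaining relations map to trivial identities.
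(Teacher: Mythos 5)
Your verification is correct: since $S$ is defined by the presentation \eqref{058}--\eqref{061}, the universal property reduces well--definedness of $\phi_s$ to checking that each defining relation maps to an identity valid in $\S_n$, and your case--by--case check (relations \eqref{058} and the last two of \eqref{061} collapse to trivialities, \eqref{059} becomes the braid relation, \eqref{060} the commutation relation, and the first relation of \eqref{061} becomes $s_r^2=1$) covers all of them against the presentation \eqref{032}. This is a slightly more direct route than the paper's: there the corollary is drawn from the derived relations \eqref{066} and \eqref{067}, which are themselves consequences of Lemma \ref{065} and record that the generators $\tz^r_{i,j}$ satisfy symmetric--group--type relations in their superindices \emph{independently of the subindices}; that stronger in--$S$ statement is not needed for the corollary itself but is reused immediately afterwards in the normal form argument of Proposition \ref{070}. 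Your argument buys self--containedness (no appeal to Lemma \ref{065}), at the cost of not exhibiting the subindex--free relations that the paper wants on record; either way the corollary stands.
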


\begin{pro}\label{070}
Every element $g\in S$ can be represented by a word in the generators $\te_{i,j}$ and $\tz_{i,j}^r$, such that  the word obtained from it when replacing $\te_{i,j}$ by $e_{i,j}$ and $\tz_{i,j}^r$ by $z_{i,j}^r$ is a normal form of $\phi(g)$.
\end{pro}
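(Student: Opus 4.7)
The plan is to proceed by induction on the length of a word $w$ in the generators of $S$ representing $g$, transforming $w$ via the defining relations of $S$ into a word whose image under $\phi$ equals the normal form of $\phi(g)$ prescribed in Remark~\ref{083}. First, I will push every $\te$-generator of $w$ to the left by means of the conjugation relation $\tz^r_{i,j}\te_{h,k}=\te_{r(h),r(k)}\tz^r_{i,j}$ from \eqref{061}, rewriting $w$ as $E\cdot Z$ with $E$ a word in the generators $\te_{i,j}$ and $Z$ a word in the generators $\tz_{i,j}^r$. Next, I will reduce the superscript sequence of $Z$ to a minimal-length normal form for the permutation $\phi_s(g)\in\S_n$. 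By parts (ii) and (iii) of Lemma~\ref{065} together with \eqref{059}--\eqref{060}, the generators $\tz_{i,j}^r$ obey the braid relations \emph{independently of their subindices} (cf.\ \eqref{066}), so any braid move may be performed within $Z$, and the induced transformation of the subindices is explicit. Whenever such moves create an adjacent repetition $\tz^r_{i,j}\tz^r_{h,k}$, relation \eqref{061} converts it into $\te_{i,j}\te_{r(h),r(k)}$; the resulting $\te$'s are then pushed back to the left, and the procedure is iterated. Termination is guaranteed because the length of the $\tz$-subword strictly decreases at each reduction, so eventually the superscripts of $Z$ form a reduced expression $s_{r_1}\cdots s_{r_k}$ in the chosen normal form.

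The third and most delicate step is the alignment of the subindices of the reduced $\tz$-subword with the canonical conjugation pattern $(\bar p_i,\bar q_i)=s_{r_{i-1}}\cdots s_{r_1}(p_1,q_1)$. For this, I will combine the exchange rule $\te_{i,j}\tz_{h,k}^r=\te_{h,k}\tz_{i,j}^r$ from part (i) of Lemma~\ref{065} with the idempotency $\te^2=\te$ and the absorption rule $\te_{i,j}\tz_{i,j}^r=\tz_{i,j}^r$ from \eqref{061}. Concretely, the leftmost subindex of $Z$ can be forced to equal any chosen $(p_1,q_1)$ by duplicating a copy of $\te_{p_1,q_1}$ extracted from $E$ and exchanging it past the first $\tz$-letter; a direct check shows that the braid moves of Lemma~\ref{065} preserve the relation $\bar p_{i+1}=s_{r_i}(\bar p_i)$, so this choice propagates automatically along $Z$. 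The leftover $\te$-word $E$ is governed by the relations~\eqref{058} of the partition monoid $\P_n$, hence can be put in Fitzgerald normal form $\te_{p_2,q_2}\cdots\te_{p_h,q_h}$ by \cite[Proposition~3.3]{ArJu21}. The resulting word has the shape required by Remark~\ref{083}, and its image under $\phi$ is the claimed normal form of $\phi(g)$.

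The main obstacle lies in the bookkeeping of the third step. After each reduction $\tz^r\tz^r\to\te\te$, the newly created $\te$-letters must be absorbed into $E$ without spoiling the canonical pattern already established on the surviving $\tz$-letters, and the Fitzgerald normalization of $E$ at the end has to be compatible with the particular choice of $(p_1,q_1)$ that was transferred to the first $\tz$-subindex. Ensuring that these two normalizations can be carried out simultaneously will require a careful joint induction on the length of $Z$ and on the complexity of $E$, which I expect to be the technical heart of the argument.
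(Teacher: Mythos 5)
Your first two steps (pushing every $\te$--generator to the left with the last relation in \eqref{061}, then reducing the $\tz$--word with the index--free braid moves \eqref{066} and the quadratic reductions, with termination by length) coincide with the paper's argument and are fine. The gap is in your third step, which you yourself flag as the ``technical heart'' and leave to ``a careful joint induction'': the claim that after forcing the first subindex to be $(p_1,q_1)$ ``the braid moves of Lemma~\ref{065} preserve the relation $\bar p_{i+1}=s_{r_i}(\bar p_i)$, so this choice propagates automatically along $Z$'' is not justified, and as stated it is not true. Exchanging a copy of $\te_{p_1,q_1}$ past the first $\tz$--letter via Lemma~\ref{065}(\ref{062}) only alters that one letter; the subindices of the remaining letters are arbitrary leftovers of the original word and of the quadratic reductions, and no braid move forces them into the conjugation pattern $(\bar p_i,\bar q_i)=(r_{i-1}\circ\cdots\circ r_1)(p_1,q_1)$ required by Remark~\ref{083}. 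So the very point of the proposition --- that one can land exactly on the normal form, including the removal of the first partition generator --- is left unproven.

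The missing idea is the factorization the paper uses to decouple the two components. Once $Z=\tz^{r_1}_{p_1,q_1}\cdots\tz^{r_k}_{p_k,q_k}$ is reduced, read the absorption relation $\te_{i,j}\tz^{r}_{i,j}=\tz^{r}_{i,j}$ backwards to insert one $\te$ in front of each letter and push all of them to the left with \eqref{061}; this rewrites $g$ as $e\,e'\,z$ with $e'=\te_{\bar p_1,\bar q_1}\cdots\te_{\bar p_k,\bar q_k}$, where the indices are conjugated by the prefix permutations. The key observation is then that $\phi(g)=\phi(ee')\,\phi_s(z)$, because every tie produced by $\phi(z)$ already occurs in $\phi(ee')$: the partition component of $\phi(g)$ is carried entirely by $ee'$, which can be put in the normal form of \cite[Proposition 3.3]{ArJu21} using \eqref{058}, while the subindices of the $\tz$--letters no longer affect $\phi(g)$ and can afterwards be reset at will by the exchange rule of Lemma~\ref{065}(\ref{062}) to the conjugates of the first generator of that normal form, thereby absorbing it and producing precisely the word of Remark~\ref{083}. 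Without this factorization (or an equivalent argument controlling simultaneously the $\te$--part and all the subindices), your procedure does not establish that the final word maps to the prescribed normal form.
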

\begin{proof}
Due to the third relation in \eqref{067}, every element $g\in S$ can be represented by a word $eu$, where $e$ is a word in the generators $\te_{i,j}$ and $u$ is a word in the generators $\tz_{i,j}^r$. Moreover, because of \eqref{066} and \eqref{067}, we can assume that $u=\tsz^{r_1}\cdots\tsz^{r_k}$ is of minimal length, that is, $\phi_s(eu)=s_{r_1}\cdots s_{r_k}$ is of minimal length. Observe that, by applying \eqref{067}, we have\[u\equiv(\underbrace{\te_{\bar{p}_1,\bar{q}_1}\cdots \te_{\bar{p}_k,\bar{q}_k}}_{e'})\underbrace{\tsz^{r_1}\cdots\tsz^{r_k}}_z,\quad\text{where}\quad\tsz^{r_i}=\tz_{p_i,q_i}^{r_i},\quad\te_{\bar{p}_i,\bar{q}_i}=\te_{(r_1\circ\cdots\circ r_{i-1})(p_i),(r_1\circ\cdots\circ r_{i-1})(q_i)}.\]Thus $\phi(g)=\phi(ee')\phi(z)=\phi(ee')\phi_s(z)$ because each tie connecting strands in $\phi(z)$ is already occurring in $\phi(ee')$. Moreover, due to \eqref{058}, we can assume that $ee'$ is written in normal form as in \cite[Proposition 3.3]{ArJu21}. Finally, by using \eqref{062} in Lemma \ref{065}, the first generator $e_{a_1,a_2}$ of $ee'$ can be removed and each generator $\tz^{r_i}_{p_i,q_i}$ can be replaced by $\tz^{r_i}_{\tilde{p}_i,\tilde{q}_i}$, where $\tilde{p}_i=(r_{i-1}\circ\cdots\circ r_1)(p_i)$ and $\tilde{q}_i=(r_{i-1}\circ\cdots\circ r_1)(q_i)$. Thus, the word obtained from $ee'z$ when replacing $\te_{i,j}$ by $e_{i,j}$ and $\tz_{i,j}^r$ by $z_{i,j}^r$ is a normal form of $\phi(g)$.
\end{proof}

\begin{proof}[Proof of Theorem \ref{055}]
Let $g,g'\in S$ such that $\phi(g)=\phi(g')$, and let $u$ be the normal form of $\phi(g)$ as in Remark \ref{083}. Proposition \ref{070} implies both $g,g'$ have word representatives in the generators $\te_{i,j}$ and $\tz_{i,j}^r$, which become $u$ when replace $\te_{i,j}$ by $e_{i,j}$ and $\tz_{i,j}^r$ by $z_{i,j}^r$. Since $u$ is uniquely defined, these words must be identical. Thus $g=g'$. Therefore $\phi$ is an isomorphism.
\end{proof}

\section{The tied--boxed Temperley--Lieb algebra}\label{113}

In this section, inspired by the quotient of $\E_n(q)$ in \cite{RyH22}, which in turn was inspired by \cite{Ha99}, we introduce the \emph{tied--boxed Temperley--Lieb algebra}. Then, we give a cellular basis for it (Theorem \ref{132}) and study its connection with the \emph{partition Temperley--Lieb algebra} introduced by Juyumaya \cite{Ju13} (Subsection \ref{114}). Further, we show a diagrammatic realization through the boxed ramified monoid of the Jones monoid (Subsection \ref{115}). Additionally, we study the boxed ramified monoid of the Brauer monoid (Subsection \ref{116}).

The \emph{tied--boxed Temperley--Lieb algebra} $bTL_n(q)$ is the quotient of $bH_n(q)$ by the two--sided ideal generated by the following \emph{Steinberg elements}:
\begin{equation}\label{steinberg}
z_{i,\,j}=e_ie_j(1+qz_i+qz_j+q^2z_iz_j+q^2z_jz_i+q^3z_iz_jz_i),\qquad |i-j|=1.
\end{equation}

\begin{pro}\label{134}
The tied--boxed Temperley--Lieb algebra $bTL_n(q)$ is presented by generators $e_1,\ldots,e_{n-1}$ satisfying \eqref{023}, and generators $d_i=q^{-1}e_i+z_i$ for all $i\in[n-1]$, subject to the following relations:\begin{gather}
d_i^2=(q+q^{-1})d_i;\qquad d_id_jd_i=e_jd_ie_j,\quad|i-j|=1;\label{R1}\\
d_id_j=d_jd_i,\quad|i-j|>1;\qquad d_ie_i=d_i;\qquad d_ie_j=e_jd_i;\label{R2}
\end{gather}
\end{pro}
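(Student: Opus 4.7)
The strategy is a Tietze-transformation argument. Let $A$ denote the $\qring$-algebra abstractly presented by generators $e_1,\ldots,e_{n-1},d_1,\ldots,d_{n-1}$ subject to \eqref{023}, \eqref{R1}, and \eqref{R2}. I construct mutually inverse homomorphisms $\phi:A\to bTL_n(q)$ and $\psi:bTL_n(q)\to A$, given by $e_i\mapsto e_i$ in both, $d_i\mapsto q^{-1}e_i+z_i$ for $\phi$, and $z_i\mapsto d_i-q^{-1}e_i$ for $\psi$.

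For the well-definedness of $\phi$, I verify that the images $q^{-1}e_i+z_i\in bTL_n(q)$ satisfy \eqref{R1} and \eqref{R2}. The absorption and commutation relations follow by direct expansion using \eqref{023}, \eqref{h3}, and the consequence $z_ie_i=z_i$ (obtained by taking $j=i$ in the second relation of \eqref{h3} and combining with the first). The quadratic relation reduces via \eqref{h4} to a one-line computation giving $d_i^2=(q+q^{-1})d_i$. The central point is the identity
\[d_id_jd_i-e_jd_ie_j=q^{-3}e_ie_j\bigl(1+qz_i+qz_j+q^2z_iz_j+q^2z_jz_i+q^3z_iz_jz_i\bigr)=q^{-3}z_{i,j},\]
derived by expanding both products and applying \eqref{023}, \eqref{h3}, and the braid relation $z_iz_jz_i=z_jz_iz_j$; this vanishes modulo the Steinberg ideal defining $bTL_n(q)$, yielding \eqref{R1}.

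For $\psi$, I set $x_i:=d_i-q^{-1}e_i\in A$ and check that the elements $e_i,x_i$ satisfy all relations of $bH_n(q)$ together with $z_{i,j}(x)=0$. The quadratic, far-commutation, and absorption relations follow routinely from \eqref{R1}, \eqref{R2} after expansion. The main obstacle is the braid relation $x_ix_jx_i=x_jx_ix_j$ for $|i-j|=1$. Expanding $x_ix_jx_i=(d_i-q^{-1}e_i)(d_j-q^{-1}e_j)(d_i-q^{-1}e_i)$ and using \eqref{R2}, together with the consequences $d_i^2=(q+q^{-1})d_i$, $e_id_i=d_i=d_ie_i$ (from \eqref{R2} applied with $i=j$), and $d_id_jd_i=e_jd_ie_j=e_jd_i$ (from \eqref{R1} combined with \eqref{R2} and $e_j^2=e_j$), one finds
\[x_ix_jx_i=q^{-2}e_jd_i+q^{-2}e_id_j-q^{-1}d_id_j-q^{-1}d_jd_i-q^{-3}e_ie_j,\]
whose right-hand side is manifestly symmetric under $i\leftrightarrow j$ (using $e_ie_j=e_je_i$), so $x_ix_jx_i=x_jx_ix_j$. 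Once this is in hand, the identity $d_id_jd_i-e_jd_ie_j=q^{-3}z_{i,j}(x)$ is derived inside $A$ exactly as for $\phi$, and its left-hand side vanishes by \eqref{R1}, so $z_{i,j}(x)=0$.

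Finally, $\phi\circ\psi$ and $\psi\circ\phi$ fix each generator by direct inspection ($\phi\psi(z_i)=\phi(d_i-q^{-1}e_i)=z_i$, and symmetrically), so $\phi$ and $\psi$ are mutually inverse isomorphisms, establishing the stated presentation.
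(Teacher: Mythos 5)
Your proof is correct and follows essentially the same route as the paper: a change of generators $z_i=d_i-q^{-1}e_i$ (Tietze transformation), with the same two key computations — the identity $d_id_jd_i-e_jd_ie_j=q^{-3}z_{i,j}$ translating the Steinberg relation, and the expansion of $z_iz_jz_i$ into an expression symmetric in $i\leftrightarrow j$ showing the braid relation is automatic. Your packaging via explicit mutually inverse homomorphisms just makes the equivalence of presentations more explicit; note only that the key identity for $\phi$ actually uses the quadratic relation \eqref{h4} rather than the braid relation, a harmless slip since all relations hold in $bTL_n(q)$.
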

\begin{proof}
By definition, the algebra $bTL_n(q)$ is presented by the same generators and relations than $bH_n(q)$, together with the additional relation $z_{i,\,j}=0$ for each Steinberg element in \eqref{steinberg}. Observe that $z_i=d_i-q^{-1}e_i$ for all $i\in[i-1]$, then, in order to get a presentation for $bTL_n(q)$ by generators $d_i$ and $e_i$, it is enough to replace each $z_i$ with $d_i-q^{-1}e_i$ in the relations of this algebra.

Relations in \eqref{R2} are directly equivalent to the second relation in \eqref{h2} and the ones in \eqref{h3}, respectively. By using \eqref{R2} we easily show that \eqref{h4} is equivalent to the quadratic relation in \eqref{R1}. Thus, we get\[z_{i,\,j}=e_ie_j+qe_jz_i+qe_iz_j+q^2z_iz_j+q^2z_jz_i+q^3z_iz_jz_i=q^3d_id_jd_i-q^2e_jd_i^2+qe_jd_i=q^3(d_id_jd_i-e_jd_i).\]This implies that $z_{i,\,j}=0$ is equivalent to the second relation in \eqref{R1}. Finally, observe that\[z_iz_jz_i=-q^{-1}d_jd_i-q^{-1}d_id_j+q^{-2}e_id_j+q^{-2}e_jd_i-q^{-3}e_ie_j\quad\text{for all}\quad|i-j|=1.\]Thus, by using \eqref{R1} and \eqref{R2}, the first relation in \eqref{h2} is superfluous. This concludes the proof.
\end{proof}

As with the tied--boxed Hecke algebra, for each $\mu\in\C_n$, we can define subalgebras $bTL_n^\mu=\bbE_\bI bTL_n^\mu$ of $bTL_n$, where $\bI$ is the unique linear set partition satisfying $\|\bI\|=\mu$. Moreover, there is a two--side ideal decomposition of $bTL_n$ given as follows:\begin{equation}\label{descomposiciondebTLn}bTL_n=\bigoplus_{\mu\in\C_n}bTL_n^\mu\end{equation}

A similar argue to the one used in the proof of Corollary \ref{isoconHecke} implies that\[bTL_n^\mu\simeq TL_\mu=TL_{\mu_1}\otimes\cdots\otimes TL_{\mu_k},\qquad\mu=(\mu_1,\ldots,\mu_k)\in\mathcal{C}_n.\]Indeed, from the decomposition in factors given in (\ref{124}), we get the following isomorphism of $\qring$--algebras in terms of the cellular basis:\[\begin{array}{ccc}
TL_{\mu_1}\otimes\cdots\otimes TL_{\mu_k}&\to&bTL_n^\mu\\[2.5mm]m^{\lambda^{(1)}}_{\s_1\t_1}\otimes\cdots\otimes m^{\lambda^{(k)}}_{\s_k\t_k}&\mapsto&\bbE_{\bI_\blam}m^{\lambda^{(1)}}_{\s_1\t_1}\cdots m^{\lambda^{(k)}}_{\s_k\t_k}=\bmm_{\bs\bt}^\blam\end{array}\]where $\blam=(\lambda^{(1)},\ldots,\lambda^{(k)})\in\MPar_n^{\leq2}$, and $\bs=(\s_1,\ldots,\s_k)$ and $\bt=(\t_1,\ldots,\t_k)$ are standard $\blam$--multitableaux of the initial kind. Thus, due to \eqref{descomposiciondebTLn}, we obtain the following isomorphism of $\qring$--algebras:\[bTL_n\,\simeq\,\bigoplus_{\mu\in\C_n} TL_{\mu_1}\otimes\cdots\otimes TL_{\mu_k}.\]In particular, $\dim bTL_n(q)=\sum_{\mu\in \mathcal{C}_n}\c_{\mu_1}\cdots\c_{\mu_k}$.

The following theorem is immediate from the previous results.
\begin{thm}\label{132}
For each $\mu\in\C_n$, the algebra $bTL_n^\mu$ is cellular with cellular basis\[B_{bTL_n^\mu}=\{\bmm_{\bs\bt}^\blam\mid \bs,\bt\in T(\blam),\,\blam\in\MPar_n^{\leq2}\text{ and }\comp(\blam)=\mu\}.\]In consequence, the algebra $bTL_n(q)$ is cellular with cellular basis $B_{bTL_n}=\{\bmm_{\bs\bt}^\blam\mid\blam\in\MPar_n^{\leq2},\,\bs,\bt\in T(\blam)\}$.
\end{thm}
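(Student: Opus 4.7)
The plan is to mirror the proof of Theorem \ref{131}, transporting cellularity from the Young Temperley--Lieb algebras via the isomorphism $bTL_n^\mu \simeq TL_\mu$ displayed just above the theorem, and then assembling the ideal decomposition \eqref{descomposiciondebTLn} into a global cell datum.

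First I would invoke that each Young Temperley--Lieb algebra $TL_\mu \simeq TL_{\mu_1} \otimes \cdots \otimes TL_{\mu_k}$ is cellular as a tensor product of cellular algebras (cf.\ Subsection \ref{095}), with cellular basis $B_{TL_\mu}$ indexed by pairs of standard $\blam$-multitableaux of the initial kind satisfying $\blam \in \MPar_n^{\leq 2}$ and $\comp(\blam) = \mu$. The explicit $\qring$-algebra isomorphism $TL_\mu \to bTL_n^\mu$ given by $m^{\lambda^{(1)}}_{\s_1\t_1} \otimes \cdots \otimes m^{\lambda^{(k)}}_{\s_k\t_k} \mapsto \bmm_{\bs\bt}^\blam = \bbE_{\bI_\blam} m^{\lambda^{(1)}}_{\s_1\t_1} \cdots m^{\lambda^{(k)}}_{\s_k\t_k}$ carries $B_{TL_\mu}$ to the proposed $B_{bTL_n^\mu}$, so the task reduces to verifying the cell axioms directly on this image.

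Concretely, the antiautomorphism $\ast$ of $bH_n(q)$ descends to $bTL_n(q)$ because every Steinberg element in \eqref{steinberg} is $\ast$-invariant, and it fixes $\bbE_{\bI_\blam}$; using the factorization \eqref{124} one then gets $(\bmm_{\bs\bt}^\blam)^\ast = \bmm_{\bt\bs}^\blam$ verbatim as in Theorem \ref{131}. For the multiplicative cell axiom it is enough to check $a = e_i$ and $a = d_i$. The case $a = e_i$ is immediate from Proposition \ref{propE}(\ref{129}), whose conclusion is inherited by the quotient. For $a = d_i$, the identity $d_i = e_i d_i$ from \eqref{R2} reduces the analysis as in Theorem \ref{131}: when $i$ and $i+1$ lie in different blocks of $\bI_\blam$, the commutation $e_i z_\ell = z_\ell e_i$ from \eqref{h3} (valid because every $\ell$ appearing in $m^{\lambda^{(j)}}_{\s_j\t_j}$ is strictly interior to the $j$-th block, hence distinct from $i$) allows one to pull $e_i$ across the Murphy factors and annihilate the expression via $\bbE_{\bI_\blam} e_i = 0$; when $i$ and $i+1$ share a block $I_j$, the action localizes to the tensor factor $TL_{\lambda^{(j)}}$ and the cell condition is inherited from the Temperley--Lieb basis \eqref{cellularbasismurphyTL}.

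The global cellular datum for $bTL_n(q)$ is then obtained by taking the disjoint union of the local posets $\{\blam \in \MPar_n^{\leq 2} \mid \comp(\blam) = \mu\}$ over $\mu \in \C_n$, with multipartitions belonging to different compositions placed as incomparable elements. Because the two-sided ideals $bTL_n^\mu$ in \eqref{descomposiciondebTLn} are mutually orthogonal, the action of any $a \in bTL_n(q)$ on $\bmm_{\bs\bt}^\blam$ factors through the unique component containing it, and so the cell axiom on $B_{bTL_n}$ follows from its validity on each $B_{bTL_n^\mu}$. I expect the main technical point to be the localization step for $a = d_i$, since one must combine the cellularity of the Temperley--Lieb algebra with the pairwise commutation of Murphy factors from distinct blocks; everything else is essentially transcription from Theorem \ref{131}.
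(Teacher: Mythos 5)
Your proposal is correct and follows the same route the paper takes: the paper declares Theorem \ref{132} ``immediate from the previous results,'' namely the decomposition \eqref{descomposiciondebTLn} together with the isomorphisms $bTL_n^\mu\simeq TL_{\mu_1}\otimes\cdots\otimes TL_{\mu_k}$ matching $\bmm_{\bs\bt}^\blam$ with the Temperley--Lieb Murphy basis, exactly the transport-of-structure argument you carry out. Your explicit verification of the cell axioms (including the $\ast$-invariance of the Steinberg elements and the $a=e_i$, $a=d_i$ cases mirroring Theorem \ref{131}) is just a more detailed transcription of what the paper leaves implicit.
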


\subsection{Connection with the partition Temperley--Lieb algebra}\label{114}

Here we explore a strong connection between $bTL_n(q)$ and the \emph{partition Temperley--Lieb algebra} $\PTL_n(q)$ \cite{Ju13}. It was shown in \cite{RyH22} that a generalized version of $\PTL_n(q)$ is cellular by finding a concrete cellular basis for it. Here we rediscover this result for the particular case of the $\PTL_n(q)$. Further, we establish an explicit isomorphism between $\PTL_n(q)$ and a direct sum of matrix algebras (Theorem \ref{PTLn}).

For every integer $n\geq3$, \emph{the partition Temperley--Lieb algebra} $\PTL_n(q)$ is the quotient $\E_n(q)/J_n$, where $J_n$ is the two--sided ideal of $\E_n(q)$ generated by the elements $e_ie_jg_{i,j}$, where $g_{i,j}$ are the \emph{Steinberg elements}:\[g_{i,j}=1+qg_i+qg_j+q^{2}g_ig_j+q^2g_jg_i+q^3g_ig_jg_i,\qquad|i-j|=1.\]

Due to the decomposition of $\E_n(q)$ as a direct sum of ideals $\E_n^\alpha(q)$ in \eqref{093}, we get the following induced decomposition of $\PTL_n(q)$:
\begin{equation}
\PTL_n(q)\,=\bigoplus_{\alpha\in\Par_n}\PTL_n^\alpha(q),\quad\text{where}\quad\PTL_n^\alpha(q)=\bbE_\alpha\PTL_n(q).
\end{equation}
Now, for each $\alpha=(k_1^{m_1},\ldots,k_r^{m_r})\in\Par_n$ such that $k_1>\cdots>k_r$, we define a Temperley--Lieb version of the algebra $H^{wr}_\alpha(q)$, as follows:\[TL_\alpha^{wr}(q)=TL_{k_1}(q)\wr\S_{m_1}\otimes\cdots\otimes TL_{k_r}(q)\wr\S_{m_r}.\]

The following result is partially inspired by \cite[Theorem 4.7]{ChPo17}, which establishes an isomorphism between a Temperley--Lieb--type quotient of the Yokonuma--Hecke algebra $\FTL_{r,n}(q)$ and a direct sum of matrix algebras over certain Hecke algebras.
\begin{thm}\label{PTLn}
For each partition $\alpha=(n_1,\ldots,n_r)$ of $n$, the isomorphism $\Psi_\alpha$ sending $m_{\es\et}\mapsto x_{\es_0\et_0}M_{\bs\bt}$ in Theorem \ref{isomorfismoconmatrices}, induces an isomorphism\[\PTL_n^\alpha(q)\simeq\Mat_{b_n(\alpha)}\big(TL^{wr}_\alpha(q)\big).\]
\end{thm}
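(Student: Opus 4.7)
The plan is to push the isomorphism $\Psi_\alpha$ of Theorem \ref{isomorfismoconmatrices} through the quotient defining $\PTL_n^\alpha(q)$. Explicitly, since $\bbE_\alpha$ is a central idempotent of $\E_n(q)$, the two--sided ideal $\bbE_\alpha J_n$ is the kernel of the projection $\E_n^\alpha(q)\twoheadrightarrow\PTL_n^\alpha(q)$, so it is enough to identify the image $\Psi_\alpha(\bbE_\alpha J_n)$ with the ideal $\Mat_{b_n(\alpha)}(K_\alpha)$ of $\Mat_{b_n(\alpha)}(H^{wr}_\alpha(q))$, where $K_\alpha$ is the two--sided ideal of $H^{wr}_\alpha(q)$ generated by the Steinberg elements inside each Hecke factor $H_{n_i}(q)\subset H_{n_i}(q)\wr\S_{m_i}$. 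Once this identification is established, the quotient on the matrix side becomes $\Mat_{b_n(\alpha)}\bigl(H^{wr}_\alpha(q)/K_\alpha\bigr)=\Mat_{b_n(\alpha)}\bigl(TL^{wr}_\alpha(q)\bigr)$, yielding the claimed isomorphism.

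The key computation is to evaluate $\bbE_\alpha\cdot e_ie_jg_{i,j}$ for $|i-j|=1$. Writing $\bbE_\alpha=\sum_{|\bI|=\alpha}\bbE_\bI$ and using \eqref{propIdempotentesEI}, I obtain $\bbE_\bI\,e_ie_j=\bbE_\bI$ when $i\sim_\bI(i+1)\sim_\bI(i+2)$ (or the analogous relation for the adjacent pair), and $\bbE_\bI\,e_ie_j=0$ otherwise. Hence only those set partitions $\bI$ of type $\alpha$ in which the indices $i,j$ lie in the same block contribute, and for those $\bbE_\bI\,e_ie_jg_{i,j}=\bbE_\bI\,g_{i,j}$. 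Transporting this through $\Psi_\alpha$, the element $\bbE_\bI\,g_{i,j}$ lands inside the Hecke factor of $H^{wr}_\alpha(q)$ corresponding to the block of $\bI$ containing $i,j$, precisely as the classical Steinberg element of that factor, multiplied by a single diagonal elementary matrix $M_{\bs\bs}$. Varying $\bI$ over all set partitions of type $\alpha$ and conjugating by the generators $\bbB_w$ associated to $\S_\Lambda^k$ recovers every Steinberg element of every Hecke factor at every matrix entry, so $\Psi_\alpha(\bbE_\alpha J_n)=\Mat_{b_n(\alpha)}(K_\alpha)$.

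For the reverse containment, the factors of $H^{wr}_\alpha(q)$ indexed by the $\S_{m_i}$ act trivially on Steinberg elements inside a given Hecke factor (they only permute factors), so the generators of $K_\alpha$ as a two--sided ideal in the wreath algebra are exactly the images above; the surjection onto every matrix entry is then automatic from the fact that $\Psi_\alpha$ is already surjective on $\Mat_{b_n(\alpha)}(H^{wr}_\alpha(q))$ and $K_\alpha$ is stable under conjugation by the $B_w$. Passing to the quotient therefore gives a well--defined $\qring$--algebra map $\overline{\Psi}_\alpha:\PTL_n^\alpha(q)\to\Mat_{b_n(\alpha)}(TL^{wr}_\alpha(q))$, which is both surjective (since $\Psi_\alpha$ is) and injective (since we have matched kernels).

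The main obstacle I anticipate is the bookkeeping in the previous paragraph: one must verify that after multiplying the Steinberg generator by $\bbE_\alpha$ and pushing through $\Psi_\alpha$, the image really is supported inside a single Hecke component of the wreath product, with no residual interaction with the $\S_{m_i}$ factors, and then that the totality of such images exhausts $\Mat_{b_n(\alpha)}(K_\alpha)$. This amounts to a careful application of the relation $\bbE_\bI g_w=g_w\bbE_{\bI w}$ together with the explicit description \eqref{elementosbasales} of the cellular basis and the definition of $\Psi_\alpha$, along the lines of the proof of \cite[Theorem 64]{EsRyH18}, so the work is technical but not conceptually new.
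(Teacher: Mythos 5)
Your proposal is correct and follows essentially the same route as the paper: the core step in both is the computation that $\bbE_\bI e_ie_jg_{i,j}$ vanishes unless $i,i+1,i+2$ lie in one block of $\bI$, and that in the remaining cases $\Psi_\alpha$ sends $\bbE_\bI g_{i,j}$ to a classical Steinberg element of a Hecke factor of $H^{wr}_\alpha(q)$ times a diagonal matrix unit, with the reverse containment handled by conjugacy of Steinberg elements. The only difference is packaging: you match the ideals $\Psi_\alpha(\bbE_\alpha J_n)=\Mat_{b_n(\alpha)}(K_\alpha)$ and then pass to quotients, while the paper equivalently factors $\pi_1^\alpha\circ\Psi_\alpha$ and $\pi_2^\alpha\circ\Psi_\alpha^{-1}$ through the respective quotients and checks the induced maps are mutually inverse.
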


Taking into account the cellular basis of $TL_n(q)$ in \eqref{cellularbasismurphyTL}, we obtain the following results as immediate consequences of Theorem \ref{PTLn}.
\begin{crl}
The following is a cellular basis for $\PTL_n(q)$:\[B_{TL^{\leq 2}_n}=\left\{m_{\es\et}\mid\es,\et\in\Std(\Lambda)\text{ and }\Lambda\in\L_n^{\leq 2}\right\},\]where $\L_n^{\leq2}$ denotes the collection of pairs $(\blam\mid\bmu)$ in $\L_n$ such that each component of the Young diagram of $\blam$ has at most two columns.
\end{crl}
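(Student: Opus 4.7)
The plan is to transfer the cellular structure from the matrix algebras on the right hand side of Theorem \ref{PTLn} to $\PTL_n(q)$ via the isomorphism $\Psi_\alpha$, following exactly the same strategy that was used in Theorem \ref{isomorfismoconmatrices} to transfer a cellular basis from matrix algebras to $\E_n^\alpha(q)$, but now with the Temperley--Lieb algebra $TL_{k_i}(q)$ replacing the Hecke algebra $H_{k_i}(q)$ inside the wreath product.

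First I would note that $TL_n(q)$ is cellular via the H\"arterich basis \eqref{cellularbasismurphyTL}, which is indexed by pairs of standard tableaux whose shape is a partition with at most two columns. Using the standard fact that wreath products of cellular algebras with symmetric groups are cellular (see \cite[Section 3.2]{GeGo13}) and that tensor products and direct sums of cellular algebras are cellular, it follows that each $TL^{wr}_\alpha(q)$ is cellular, with a cellular basis whose indexing data are pairs $(\bmu,\bs)$ where $\bmu$ is a multipartition with components in $\Par^{\leq 2}$ governed by $\alpha$ and $\bs$ is a standard $\bmu$--multitableau of the initial kind. Taking matrix algebras preserves cellularity in the usual way, the extra indexing data being row/column labels of matrix units.

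Next, pulling this cellular basis back along the isomorphism $\Psi_\alpha:\PTL_n^\alpha(q)\xrightarrow{\sim}\Mat_{b_n(\alpha)}(TL^{wr}_\alpha(q))$ of Theorem \ref{PTLn}, and using the identification in \eqref{elementosbasales} between matrix-unit times Murphy element and the elements $m_{\es\et}^\Lambda$, one sees that the resulting cellular basis of $\PTL_n^\alpha(q)$ is precisely\[\{m_{\es\et}^\Lambda\mid \Lambda\in\L_n^{\leq 2}\text{ with }\comp(\blam)=\alpha,\ \es,\et\in\Std(\Lambda)\}.\]This is the restriction to $\alpha$ of the claimed indexing set. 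Summing over $\alpha\in\Par_n$ and invoking the direct sum decomposition $\PTL_n(q)=\bigoplus_{\alpha}\PTL_n^\alpha(q)$ assembles these into the set $B_{TL^{\leq 2}_n}$, which is therefore a cellular basis of $\PTL_n(q)$.

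The only real subtlety I expect is bookkeeping: one must check that the antiautomorphism, the poset structure (dominance on pairs of multipartitions with the two--column restriction on the first entry), and the triangularity condition inherited from the matrix/wreath construction coincide with the expected ones on $\L_n^{\leq 2}$ under $\Psi_\alpha$. This amounts to observing that all of these structural data were already verified for $\E_n(q)$ in Theorem \ref{isomorfismoconmatrices}, and they restrict cleanly when the Hecke factors $H_{k_i}(q)$ are replaced by their cellular quotients $TL_{k_i}(q)$, since the Steinberg elements generating $J_n$ are sent under $\Psi_\alpha$ into exactly the matrix entries spanned by Steinberg elements in each $TL_{k_i}(q)$--factor.
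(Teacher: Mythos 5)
Your proposal is correct and follows essentially the same route as the paper, which states this corollary as an immediate consequence of Theorem \ref{PTLn} combined with H\"arterich's cellular basis \eqref{cellularbasismurphyTL}: cellularity of the wreath products $TL^{wr}_\alpha(q)$ and of matrix algebras over them is transferred back through $\Psi_\alpha$, identifying the surviving basis elements with the $m_{\es\et}^\Lambda$ for $\Lambda\in\L_n^{\leq 2}$ and summing over $\alpha\in\Par_n$. Your additional bookkeeping remarks (matching of the antiautomorphism, poset, and the image of the Steinberg elements) are exactly the points already settled in Theorem \ref{isomorfismoconmatrices} and in the proof of Theorem \ref{PTLn}, so nothing further is needed.
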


\begin{crl}
The dimension of $\PTL(q)$ is given by the following formula:\[\dim\PTL_n(q)\,\,\,=\sum_{(k_1^{m_1},\ldots,k_r^{m_r})\in\Par_n}\!b_n(\alpha)^2(\c_{k_1})^{m_1}m_1!\cdots (\c_{k_r})^{m_r}m_r!,\]where $\c_k$ denotes the $k$th \emph{Catalan number}.
\end{crl}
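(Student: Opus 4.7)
The plan is to extract the dimension formula directly from Theorem \ref{PTLn} by decomposing, taking dimensions, and multiplying. First I would apply the ideal decomposition
\[
\PTL_n(q) \,=\, \bigoplus_{\alpha\in\Par_n}\PTL_n^{\alpha}(q),
\]
which was established just before Theorem \ref{PTLn}, so that
\[
\dim \PTL_n(q) \,=\, \sum_{\alpha\in\Par_n}\dim \PTL_n^{\alpha}(q).
\]
Then, for each $\alpha\in\Par_n$, Theorem \ref{PTLn} provides the isomorphism $\PTL_n^{\alpha}(q)\simeq \Mat_{b_n(\alpha)}\bigl(TL^{wr}_{\alpha}(q)\bigr)$, and the standard formula $\dim \Mat_{N}(A)=N^{2}\dim A$ yields $\dim \PTL_n^{\alpha}(q)=b_n(\alpha)^{2}\,\dim TL^{wr}_{\alpha}(q)$.

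Next, I would compute $\dim TL^{wr}_{\alpha}(q)$ for $\alpha=(k_1^{m_1},\ldots,k_r^{m_r})$ by unwinding the definition
\[
TL^{wr}_{\alpha}(q)\,=\,TL_{k_1}(q)\wr\S_{m_1}\otimes\cdots\otimes TL_{k_r}(q)\wr\S_{m_r}.
\]
Using multiplicativity of dimension across tensor products, together with the well-known identity $\dim(A\wr\S_m)=(\dim A)^{m}\,m!$ (valid because $A\wr\S_m$ has basis indexed by $m$-tuples from a basis of $A$ paired with elements of $\S_m$), and the fact that $\dim TL_k(q)=\c_k$ recalled in Subsection \ref{104}, I would obtain
\[
\dim TL^{wr}_{\alpha}(q)\,=\,\prod_{i=1}^{r}(\c_{k_i})^{m_i}\,m_i!.
\]

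Combining the two computations gives
\[
\dim \PTL_n(q)\,=\,\sum_{(k_1^{m_1},\ldots,k_r^{m_r})\in\Par_n}b_n(\alpha)^{2}(\c_{k_1})^{m_1}m_1!\cdots(\c_{k_r})^{m_r}m_r!,
\]
as desired. There is no real obstacle here: all nontrivial content is already contained in Theorem \ref{PTLn}, and the remaining work is bookkeeping on dimensions of matrix algebras, tensor products, and wreath products. The only point that merits a brief justification is the wreath-product dimension formula, which could be cited from \cite{GeGo13} (where the analogous statement for $H^{wr}_{\alpha}(q)$ is used) since the argument is identical once $TL_k(q)$ replaces $H_k(q)$.
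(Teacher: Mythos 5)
Your proposal is correct and follows exactly the route the paper intends: the paper states this corollary as an immediate consequence of Theorem \ref{PTLn}, and your argument just spells out the bookkeeping (summing over the ideal decomposition, $\dim\Mat_N(A)=N^2\dim A$, multiplicativity over tensor factors, $\dim(A\wr\S_m)=(\dim A)^m m!$, and $\dim TL_k(q)=\c_k$). No gaps.
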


\begin{proof}[Proof of Theorem \ref{PTLn}]
Let $\pi_1^\alpha:\Mat_{b_n(\alpha)}\big(H^{wr}_\alpha(q)\big)\to\Mat_{b_n(\alpha)}\big(TL^{wr}_\alpha(q)\big)$ be the natural surjection induced by the projection $H^{wr}_\alpha(q)\twoheadrightarrow TL_\alpha^{wr}(q)$. We will first prove that $\pi_1^\alpha\circ\Psi_\alpha$ factors through $\bbE_\alpha\E_n/\bbE_\alpha J_n\simeq\PTL_n^\alpha$. To show this, it is enough to check that\begin{equation}\label{zero}\pi_1^\alpha\circ\Psi_\alpha(\bbE_\alpha e_1e_2g_{1,2})=0,\end{equation}because the Steinberg elements are all conjugate to each other in $\E_n^\alpha(q)$ \cite[Proposition 4.5]{Ju13}. For short, we set $\bI_{\{1,2,3\}}=\{\{1,2,3\},\{4\},\ldots,\{n\}\}$. In particular, we note that $E_{\bI_{\{1,2,3\}}}=e_1e_2$. Then, for the case when $n_i\leq2$ for all $i$, \eqref{zero} holds trivially since $\bbE_\bI E_{\bI_{\{1,2,3\}}}=0$ for all $\bI$ of type $\alpha$. Suppose now that $\alpha\in\Par_n$ such that $n_j\geq3$ for some $j$. Directly from the definition \ref{idempotenteEn} of $\bbE_\alpha$ together with \eqref{propIdempotentesEI} we have that\[\Psi_\alpha(\bbE_\alpha e_1e_2g_{1,2})=\mathop{\sum_{\bI_{\{1,2,3\}}\subseteq \bI}}_{|\bI|=\alpha}\Psi_\alpha\left(\bbE_\bI g_{1,2}\right)=\sum_{w\in \mathbf{D}_{\{1,2,3\}}^\alpha}\Psi_\alpha\left(\bbE_{\bI_\blam w}g_{1,2}\right),\]where $\blam$ is any multipartition of type $\alpha$ and $\mathbf{D}_{\{1,2,3\}}^\alpha$ is the set of distinguished right coset representatives $w$ of $\S_{\bI_\blam}$ in $\S_n$ such that $\bI_{\{1,2,3\}}\subseteq\bI_\blam w$. In particular, we can consider $\blam$ as the increasing multipartition obtained by reordering the components of the multipartition $((1^{n_1}),\ldots,(1^{n_{i-1}}),(3,1^{n_i-3}),(1^{n_{i+1}}),\ldots,(1^{n_r}))$, where $i$ is the index of the component of $\bI_\blam w$ containing the set $\{1,2,3\}$. Thus, we have\begin{equation}\label{basalelementtrans}\bbE_{\bI_\blam w}g_{1,2}=\bbE_{\bI_\blam w_\bs}g_{1,2}=g_{w_\bs}^\ast\bbE_{\bI_\blam}g_{w_\bs}g_{1,2}=g_{w_\bs}^\ast\bbE_{\bI_\blam}g_{k,k+1}g_{w_\bs},\end{equation}where $k=n_1+\cdots+n_{i-1}+1$ and $\bs$ is the standard $\blam$-multitableau associated with the distinguished right coset representative $w$, that is, $w_\bs=w$. From the definition of $\Psi_\alpha$ and \eqref{basalelementtrans} it now follows that\[\Psi_\alpha(\bbE_\alpha e_1e_2g_{1,2})=\mathop{\sum_{\bs\in\Std(\blam)}}_{\bI_{\{1,2,3\}}\subseteq \bI_\blam w_\bs}\Psi_\alpha\left(g_{w_\bs}^\ast\bbE_{\bI_\blam}g_{k,k+1}g_{w_\bs}\right)=\mathop{\sum_{\bs\in\Std(\blam)}}_{\bI_{\{1,2,3\}}\subseteq\bI_\blam w_\bs}g_{k,k+1}M_{\bs\bs}\]which is indeed equal to zero in $\Mat_{b_n(\alpha)}(TL^{wr}_\alpha)$ since each $g_{k,k+1}$ is equal to zero in $TL^{wr}_\alpha(q)$. Thus we get the following commutative diagram of $\qring$--homomorphisms\[\xymatrixcolsep{4pc}\xymatrix{\E_n^\alpha(q) \ar[r]^(.35){\Psi_\alpha}\ar[d]_{\pi_2^\alpha}&\Mat_{b_n(\alpha)}\big(H^{wr}_\alpha(q)\big)\ar[d]^{\pi_1^\alpha}\\\PTL_n^\alpha(q)\ar[r]_(.35){\theta_\alpha}&\Mat_{b_n(\alpha)}\big(TL^{wr}_\alpha(q)\big)}\]
where $\pi_1^\alpha$ and $\pi_2^\alpha$ are the natural projections. Similarly, by using the inverse map of $\Psi_\alpha$, we have that there is a unique $\qring$--homomorphism $\overline{\theta_\alpha}:\Mat_{b_n(\alpha)}(TL^{wr}_\alpha(q))\to \PTL_n^\alpha(q)$ such that $\overline{\theta_\alpha}\circ\pi_1^\alpha=\pi_2^\alpha\circ\Psi_\alpha^{-1}$. Combining these two facts we have that\begin{equation}\label{compo}(\overline{\theta_\alpha}\circ\theta_\alpha)\circ\pi_2^\alpha=\pi_2^\alpha\quad\text{ and }\quad(\theta_\alpha\circ\overline{\theta_\alpha})\circ\pi_1^\alpha=\pi_1^\alpha\end{equation}Since $\pi_1^\alpha$ and $\pi_2^\alpha$ are both surjective, the relations in \eqref{compo} implies that $\overline{\theta_\alpha}\circ\theta_\alpha=\id_{\PTL_n^\alpha}$ and $\theta_\alpha\circ\overline{\theta_\alpha}=\id_{\Mat_{b_n(\alpha)}(TL^{wr}_\alpha(q))}$. This concludes the proof.
\end{proof}

\begin{crl}\label{inclusionbTLn}
There is an embedding of $\qring$--algebras $\iota_2:bTL_n(q)\hookrightarrow\PTL_n(q)$ given by the following commutative diagram:\[\xymatrix{bH_n(q)\ar[r]^{\iota_1}\ar[d]_{\pi_2}&\E_n(q)\ar[d]^{\pi_1}\\bTL_n(q)\ar[r]_{\iota_2}&\PTL_n(q)}\]
\end{crl}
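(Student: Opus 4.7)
The plan is to establish the corollary in two steps: first, verify that the composition $\pi_1\circ\iota_1$ descends to a well-defined $\qring$--algebra homomorphism $\iota_2:bTL_n(q)\to\PTL_n(q)$ making the diagram commute; second, show that $\iota_2$ is injective by transporting the cellular structures established in Theorem~\ref{131} and Theorem~\ref{132} along the embedding $\iota_1$.

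For well--definedness, we must show $\iota_1(\ker\pi_2)\subseteq\ker\pi_1$. Since $\ker\pi_2$ is the two--sided ideal of $bH_n(q)$ generated by the Steinberg elements $z_{i,j}$ of \eqref{steinberg} and $\ker\pi_1$ is the two--sided ideal of $\E_n(q)$ generated by the elements $e_ie_jg_{i,j}$, it suffices to prove the identity
\[
\iota_1(z_{i,j})\,=\,e_ie_j\,g_{i,j}\qquad\text{in }\E_n(q),\qquad|i-j|=1.
\]
Using $\iota_1(z_k)=e_kg_k$ and the idempotency/commutativity relations in \eqref{023}, each monomial in the expansion of $\iota_1(z_{i,j})$ reduces by repeatedly absorbing the surplus factors $e_i$, $e_j$ into the outer $e_ie_j$. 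The remaining identifications, such as $e_ie_jg_ig_j=e_ie_jg_ie_jg_j$ and $e_ie_jg_ig_jg_i=e_ie_jg_ie_jg_je_ig_i$, follow from the defining relations \eqref{B4}, \eqref{B5} together with the standard consequence $e_ie_jg_k=g_ke_ie_j$ for $k\in\{i,j\}$ when $|i-j|=1$; a short bookkeeping in each of the six monomials completes the identity. Hence $\iota_1(z_{i,j})\in\ker\pi_1$, and by the universal property of the quotient $\pi_2$ there is a unique $\qring$--algebra map $\iota_2$ filling the diagram.

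Injectivity is then obtained from the cellular bases. By the decomposition \eqref{descomposiciondebTLn} and the analogous decomposition of $\PTL_n(q)$ induced from \eqref{093}, it is enough to show that $\iota_2$ is injective on each summand $bTL_n^\mu(q)$. Fix $\mu\in\C_n$ and let $\alpha\in\Par_n$ be the partition obtained by reordering the parts of $\mu$. By Remark~\ref{celular a celular}, the restriction $\iota_1^\mu:bH_n^\mu(q)\hookrightarrow\E_n^\alpha(q)$ sends the cellular basis element $\bmm_{\bs\bt}^\blam$ of Theorem~\ref{131} to the cellular basis element $m_{\bms\bmt}^\Lambda$ of $\E_n^\alpha(q)$ under an explicit correspondence $(\blam,\bs,\bt)\mapsto(\Lambda,\bms,\bmt)$. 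Restricting this correspondence to multipartitions $\blam\in\MPar_n^{\leq 2}$ identifies the cellular basis $B_{bTL_n^\mu}$ of Theorem~\ref{132} with a subset of the cellular basis $B_{TL_n^{\leq 2}}$ of $\PTL_n^\alpha(q)$, since the associated $\Lambda$ still satisfies $\Lambda\in\L_n^{\leq 2}$. Consequently $\iota_2$ sends a $\qring$--basis of $bTL_n^\mu(q)$ to a linearly independent subset of $\PTL_n^\alpha(q)$, and thus is injective.

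The main obstacle is the first step: the identity $\iota_1(z_{i,j})=e_ie_jg_{i,j}$ is not immediate because the $\E_n(q)$--relations that move $g_i,g_j$ past $e_i,e_j$ at adjacent indices are asymmetric (see \eqref{B5}). The argument is straightforward once one exploits the stabilization $e_ie_je_k=e_ie_j$ for $k\in\{i,j\}$, but the verification of the cubic term $q^3$ requires a careful ordering of the relations and is the one place where a brief calculation cannot be avoided. The injectivity step, by contrast, is essentially a bookkeeping consequence of the cellular picture already developed in \cite{EsRyH18} and recalled in Subsection~\ref{109}.
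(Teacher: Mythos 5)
Your proposal is correct and takes essentially the same route as the paper: the paper's (one-line) proof is precisely your injectivity step, namely that $\iota_2$ sends cellular basis elements of $bTL_n(q)$ to cellular basis elements of $\PTL_n(q)$, via the correspondence of Remark \ref{celular a celular} restricted to $\MPar_n^{\leq2}$. Your explicit check that $\iota_1(z_{i,\,j})=e_ie_jg_{i,j}$, which gives well-definedness of $\iota_2$, is left implicit in the paper but is accurate and a reasonable addition.
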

\begin{proof}
It is a consequence of that fact that $\iota_2$ sends cellular basis elements of $bTL_n(q)$ to cellular basis elements of $\PTL_n(q)$. 
\end{proof}

The following diagram of $\qring$--algebra homomorphisms summarize the relationship between the most important algebras studied in this paper:
\[\xymatrix{\displaystyle\bigoplus_{\mu\in\C_n}H_\mu\ar[r]^\simeq\ar@{>>}[d]&
bH_n(q)\,\ar@{^(->}[r]^{\iota_1}\ar[d]_{\pi_2}&\E_n(q)\ar[d]^{\pi_1}\ar[r]^{\simeq\kern+1.5cm}&
\displaystyle\bigoplus_{\alpha\in\Par_n}\Mat_{b_n(\alpha)}\big(H^{wr}_\alpha(q)\big)\ar@{>>}[d]\\
\displaystyle\bigoplus_{\mu\in\C_n}TL_\mu\ar[r]_\simeq&bTL_n(q)\,\ar@{^(->}[r]_{\iota_2}&
\PTL_n(q)\ar[r]_{\simeq\kern+1.5cm}&\displaystyle\bigoplus_{\alpha\in\Par_n}\Mat_{b_n(\alpha)}\big(TL^{wr}_\alpha(q)\big)}\]

\subsection{Diagrammatic realization of $bTL_n(q)$}\label{115}

Here we recall the boxed ramified monoid associated with the Jones monoid, $\BR(\J_n)$, introduced as the monoid $b\J_n$ in \cite[Section 7]{AiArJu23}, which can be regarded as a basis for the algebra $bH_n(q)$. Specifically, we get that $bTL_n(q)$ is a $q$--deformation of the monoid algebra generated by $\BR(\J_n)$ (Theorem \ref{073}), providing $bTL_n(q)$ with a structure of diagram algebra.

It is well known that, for every $m\geq1$, $|\J_m|=\c_m$, so, similarly as it was done in Proposition \ref{013} and Remark \ref{021}, we have\[|\BR(\J_n)|=\sum_{(\mu_1,\ldots,\mu_k)}\c_{\mu_1}\cdots\c_{\mu_k}=\binom{2n-1}{n}=(1,3,10,35,126,462,1716,6435,24310,92378,\ldots),\]where $(\mu_1,\ldots,\mu_k)$ is a composition of $n$. See \cite[A001700, A088218]{OEIS} and \cite[Proposition 61]{AiArJu23}.

For every $i\in[n-1]$, denote by $d_i$ the ramified partition $(t_i,b_i)$. See Figure \ref{007}.
\begin{figure}[H]\figfiv\caption{Tied tangle generator $d_i$ en $\R(\CC_n)$.}\label{007}\end{figure}

\begin{thm}[{\cite[Theorem 56]{AiArJu23}}]\label{073}
The monoid $\BR(\J_n)$ is presented by generators $e_1,\ldots,e_{n-1}$ satisfying \eqref{023} and generators $d_1,\ldots,d_{n-1}$ subject to the following relations:\begin{gather}
d_i^2=d_i;\qquad d_id_j=d_jd_i,\quad|i-j|>1;\label{036}\\
d_ie_j=e_jd_i;\qquad d_ie_i=d_i;\qquad d_id_jd_i=e_jd_ie_j,\quad|i-j|=1.\label{037}
\end{gather}
\end{thm}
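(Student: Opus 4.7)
The plan is to adapt the three-step strategy used in the proof of Theorem \ref{017}, accounting for the fact that $\J_n$ is an idempotent monoid rather than a group.

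First, I would verify that the relations \eqref{023}, \eqref{036}, and \eqref{037} hold in $\BR(\J_n)$ by direct diagrammatic computation, using $d_i = (t_i, b_i)$ and $e_i = (1, b_i)$. For instance, $d_i^2 = (t_i^2, b_i^2) = (t_i, b_i) = d_i$ uses the Jones relation $t_i^2 = t_i$ from \eqref{033}; and for $|i-j|=1$, the identity $d_i d_j d_i = e_j d_i e_j$ reduces to the equality $(t_i t_j t_i, b_i b_j b_i) = (t_i, b_j b_i b_j)$, which holds since $t_i t_j t_i = t_i$ by \eqref{033} and $b_i b_j b_i = b_j b_i b_j = b_i \vee b_j$ by commutativity and idempotency in $b\CC_n \simeq \C_n$. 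The remaining relations follow analogously.

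Second, I would show generation by adapting Proposition \ref{025}. Given $(\bI, \bJ) \in \BR(\J_n)$, the boxed decomposition of $\bJ$ provides a factorization $(\bI, \bJ) = (\bI_1, \bK_1)/\cdots/(\bI_k, \bK_k)$ where $\bI_i \in \J_{m_i}$ and $\bK_i$ is the single-block boxed partition of size $2m_i$. Writing each $\bI_i = t_{j_1} \cdots t_{j_r}$ in Jones generators, one uses the identity $e \cdot t_j = d_j \cdot e$ in $\R(\J_m)$ (where $e = e_1 \cdots e_{m-1}$ represents the top block), which follows from $(1, \bK)(t_j, t_j) = (t_j, \bK) = (t_j, b_j)(1, \bK)$ since $t_j \preceq \bK$ and $b_j \preceq \bK$. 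This rewrites each piece as $(\bI_i, \bK_i) = d_{j_1} \cdots d_{j_r} \cdot e_1 \cdots e_{m_i - 1}$, and concatenating via the over product expresses $(\bI, \bJ)$ in the claimed generators.

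Third, and this is the main obstacle, I would show that the given relations suffice. Let $M$ denote the abstractly presented monoid and $\varphi: M \to \BR(\J_n)$ the surjective homomorphism from the previous two steps. To prove injectivity I would establish a normal form in $M$. Using \eqref{037}, every element of $M$ can be pushed into the shape $e_\mu \cdot w$, where $e_\mu$ is the commuting idempotent determined by a composition $\mu \in \C_n$ and $w$ is a word in the $d_j$'s whose indices lie inside single blocks of $\mu$. The crucial step is handling the collision relation $d_i d_j d_i = e_j d_i e_j$ for $|i-j|=1$: whenever such a triple appears in $w$, it absorbs an extra $e_j$ into the $e_\mu$-part while shortening the $d$-word, and simultaneously refines $\mu$ by merging two adjacent blocks. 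Restricted to each block of $\mu$ of size $\mu_r$, the residual $d$-word then reduces to a canonical Jones-monoid representative in $\J_{\mu_r}$ via the relations inherited from \eqref{033}. Uniqueness of this normal form is verified by matching it with the set-theoretic bijection $\BR(\J_n) \leftrightarrow \bigsqcup_{\mu \in \C_n} \J_{\mu_1} \times \cdots \times \J_{\mu_k}$ given by $(\bI, \bJ) \mapsto (\mu, \bI_1, \ldots, \bI_k)$, where $\mu$ is the composition encoded by $\bJ$. The most delicate point will be verifying confluence of this rewriting procedure, since \eqref{037} simultaneously alters both the $e$-part and the $d$-part, so one must carefully check that the final canonical form is independent of the order in which the collision relations are applied.
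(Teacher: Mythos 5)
You cannot compare this one against an in-paper argument: the paper does not prove Theorem \ref{073}, it imports it verbatim from \cite[Theorem 56]{AiArJu23}, so the natural benchmarks are the analogous presentations the paper does prove, Theorem \ref{017} for $\BR(\S_n)$ and Theorem \ref{016} for $\BR(\Br_n)$. Your three--step plan (verify the relations, prove generation, prove injectivity via normal forms matched against the set--theoretic description of $\BR(\J_n)$) is exactly that strategy, and your Step 1 is correct. The genuine flaw is in Step 2: the identity $(1,\bK)(t_j,t_j)=(t_j,\bK)$ is false. Concatenation by the full one--block partition is not absorbing on non--invertible diagrams: already in $\CC_2$ one computes $\bK\ast t_1=t_1$, because the bottom hook of $t_1$ meets no middle point and survives as a separate, non--boxed block; in general $(1,\bK)(t_j,t_j)=(t_j,\bK\ast t_j)$ with $\bK\ast t_j\neq\bK$, and this element does not even lie in $\BR(\J_m)$. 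Your justification ``since $t_j\preceq\bK$'' silently replaces the concatenation $\ast$ by the join $\vee$, which is legitimate only inside the boxed submonoid $b\CC_n\simeq\C_n$ (the superficially similar displayed equality $(\bI,\bI)e=(\bI,b)$ in Remark \ref{043} suffers from the same problem, so do not appeal to it). Fortunately the conclusion you wanted is true and needs no such identity: compute $d_{j_1}\cdots d_{j_r}\,e_1\cdots e_{m_i-1}$ coordinatewise --- the left components concatenate to $t_{j_1}\cdots t_{j_r}=\tilde{\bI}_i$, while the right components are all boxed, hence concatenate as compositions and give $b_{j_1}\ast\cdots\ast b_{j_r}\ast\bK_i=\bK_i$. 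With that repair, generation goes through exactly in the spirit of Proposition \ref{025}.

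For Step 3, your normal--form idea is right, but the confluence you single out as the delicate point is not needed. As in the paper's proofs of Theorems \ref{055} and \ref{016} (Propositions \ref{070} and \ref{086}), it suffices to show that every element $g$ of the abstractly presented monoid admits \emph{some} representative of the form $e_\mu w$, where every index $j$ occurring in $w$ has $j$ and $j+1$ in the same block of $\mu$ (this follows from $d_j=d_je_j$ and centrality of the $e_i$'s), and where $w$ is reduced, modulo $e_\mu$, to a fixed Jones normal form blockwise --- note $e_\mu d_id_jd_i=e_\mu e_jd_ie_j=e_\mu d_i$, so inside a block the $d_i$'s satisfy the relations \eqref{033}. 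Since the right component of $\varphi(e_\mu w)$ is exactly $b_\mu$ and the left component is the image of $w$, this canonical word is determined by $\varphi(g)$ alone; hence $\varphi(g)=\varphi(g')$ forces the canonical words to coincide and $g=g'$, with no claim about order--independence of the reductions. (Minor point: absorbing an $e_j$ makes $\mu$ coarser, not finer.) So, apart from the false identity in Step 2 --- which is repairable as indicated --- your outline is sound and follows the same route the paper uses for the sibling monoids.
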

Observe that $\BR(\J_n)$ is a quotient of the direct product of $\C_n$ with a right--angled Artin--Tits monoid.

It is immediate from Theorem \ref{073} and Proposition \ref{134} that $bTL_n(q)$ is a $q$--deformation of the monoid algebra of $\BR(J_n)$. Thus, the algebra $bTL_n(q)$ inherits the diagram combinatorics of $\BR(\J_n)$. See Figure \ref{133}.
\begin{figure}[H]
\figtwosev
\caption{Quadratic relation in \eqref{R1} in terms of diagrams.}\label{133}
\end{figure}

\begin{rem}[Singular part]
Due to \cite[Proposition 12]{AiArJu23}, we have $\R(\J_n)^\times=\J_n^\times=\J_n\cap\S_n=\{1\}$. Thus, the singular part of $\R(\J_n)$, which contains $\BR(\J_n)\backslash\{1\}$ (Remark \ref{043}), is the ideal $s\R(\J_n)=\R(J_n)\backslash\{1\}$. However, at the time of writing this paper, there is no even a known presentation for the monoid $\R(J_n)$.
\end{rem}

\begin{rem}
As of now, there is no known diagrammatic realization for the algebra $PTL_n$, however, due to Theorem \ref{073}, the embedding in Corollary \ref{inclusionbTLn} suggest a potential diagrammatic realization for the algebra $PTL_n$, through the diagrams of $bTL_n$, as the approach employed by H\"arterich in \cite{Ha99}.
\end{rem}

\subsection{The boxed ramified Brauer monoid}\label{116}

As both the symmetric group and the Jones monoid are submonoids of $\Br_n$, the boxed ramified monoid of the Brauer monoid $\BR(\Br_n)$ is an extension of $\BR(\S_n)$ and $\BR(\J_n)$ at once. Here we study this monoid, in particular we give a presentation for it (Theorem \ref{016}).

It is well known that $|\Br_m|=(2m-1)!!$ for all $m\geq1$ \cite[A001147]{OEIS}. So, similarly as it was done in Proposition \ref{013} and Remark \ref{021}, we have\[|\BR(\Br_n)|=\sum_{(\mu_1,\ldots,\mu_k)}(2\mu_1-1)!!\cdots(2\mu_k-1)!!=(1,4,22,154,1330,13882,171802,2474098,\ldots),\]where $(\mu_1,\ldots,\mu_k)$ is a composition of $n$. See \cite[A295553]{OEIS}.

\begin{pro}\label{028}
The monoid $\BR(\Br_n)$ is generated by $d_1,\ldots,d_{n-1}$, $e_1,\ldots,e_{n-1}$ and $z_1,\ldots,z_{n-1}$.
\end{pro}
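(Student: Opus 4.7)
The plan is to mirror the proof of Proposition \ref{025} for $\BR(\S_n)$, simply incorporating the extra generators $t_i$ of $\Br_n$ alongside the $s_i$. Given $(\bI,\bJ)\in\BR(\Br_n)$, I would first appeal to the boxed decomposition $\bJ=\tilde{\bJ}_1/\cdots/\tilde{\bJ}_k$; since $\bI\preceq\bJ$ and $\bI\in\Br_n$, this forces a parallel decomposition $\bI=\tilde{\bI}_1/\cdots/\tilde{\bI}_k$ with each $\tilde{\bI}_i\in\Br_{m_i}$ and each $\tilde{\bJ}_i$ equal to the total box on $[2m_i]$. Writing $\varepsilon^{(i)}$ for the product of the $e_j$'s indexed by the $i$th block, one has $(\tilde{\bI}_i,\tilde{\bJ}_i)=\tilde{\bI}_i\cdot\varepsilon^{(i)}$ in $\R(\CC_n)$, and hence $(\bI,\bJ)$ is the product over $i$ of these factors.

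Next I would express each $\tilde{\bI}_i\in\Br_{m_i}$ as a word $x_1\cdots x_q$ in the generators $s_{j_l}$, $t_{j_l}$ and show, by induction on $q$, that
\[
x_1\cdots x_q\cdot\varepsilon^{(i)} \;=\; \hat x_1\cdots\hat x_q\cdot\varepsilon^{(i)},
\]
where $\hat x_l=z_{j_l}$ if $x_l=s_{j_l}$ and $\hat x_l=d_{j_l}$ if $x_l=t_{j_l}$. The induction rests on three direct computations in $\R(\CC_n)$: (i) $e_{j_l}\,\varepsilon^{(i)}=\varepsilon^{(i)}$, because the right component of $e_{j_l}$ is contained in the total box of the $i$th block; (ii) $s_{j_l}e_{j_l}=z_{j_l}$ and $t_{j_l}e_{j_l}=d_{j_l}$, by the product rule $(\bI,\bJ)*(\bH,\bK)=(\bI*\bH,\bJ*\bK)$; and (iii) $\varepsilon^{(i)}$, $z_{j_l}$ and $d_{j_l}$ all commute with every $e_m$, since their right components are boxed partitions and boxed partitions commute under concatenation. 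Indeed, $x_q\varepsilon^{(i)}=x_qe_{j_q}\varepsilon^{(i)}=\hat x_q\varepsilon^{(i)}$ by (i) and (ii), and (iii) then pushes $\varepsilon^{(i)}$ past $\hat x_q$ so the same trick applies to $x_{q-1}$.

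Assembling the factors $(\tilde{\bI}_i,\tilde{\bJ}_i)$ over $i$ finally expresses $(\bI,\bJ)$ as a word in the generators $e_j$, $z_j$, $d_j$. I do not expect a serious obstacle: the argument is essentially that of Proposition \ref{025} plus the $t$-to-$d$ substitution, and all three identities above reduce to the compatibility of $\C_n\hookrightarrow\BR(\Br_n)$ and the commutativity of boxed partitions under concatenation. The only detail requiring mild care is the compatibility of the boxed decomposition of $\bJ$ with the Brauer structure of $\bI$, which follows at once from $\bI\preceq\bJ$ since each block of $\bI$ must be contained in a single boxed block of $\bJ$.
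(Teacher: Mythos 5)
Your reduction along the boxed blocks of $\bJ$ is fine, and your items (i) and (iii) are correct, but item (ii) — the step on which the whole induction rests — is false: in $\R(\CC_n)$ one does \emph{not} have $t_je_j=d_j$. Identifying $t_j$ with $(t_j,t_j)$ and $e_j=(1,b_j)$, the product rule gives $t_je_j=(t_j\ast 1,\,t_j\ast b_j)=(t_j,t_j)$, because the upper arc $\{j,j+1\}$ of the tie component is a block contained entirely in the top row, and such a block is untouched by concatenation on the right: in $\bI\ast\bK$ any block of $\bI$ lying inside $[n]$ survives as a block, since it can only merge with blocks of $\bK$, whose ground set is disjoint from $[n]$. (By contrast $s_je_j=(s_j,\,s_j\ast b_j)=(s_j,b_j)=z_j$ does hold, which is why this substitution works in Proposition \ref{025} for permutations.) The same observation shows $e_jt_je_j=(t_j,t_j)$ and, more generally, that $\tilde{\bI}_i\cdot\varepsilon^{(i)}=(\tilde{\bI}_i,\,\tilde{\bI}_i\ast\tilde{\bJ}_i)\neq(\tilde{\bI}_i,\tilde{\bJ}_i)$ as soon as $\tilde{\bI}_i$ has a horizontal arc; so your opening identity $(\tilde{\bI}_i,\tilde{\bJ}_i)=\tilde{\bI}_i\cdot\varepsilon^{(i)}$ already fails, and the telescoping step breaks at every letter $x_l=t_{j_l}$. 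No rearrangement of the $e$'s can repair this: in any product of ramified partitions whose leftmost tie component has $\{j,j+1\}$ as a top-row block, that block persists in the tie of the product, so such a product can never equal $d_j=(t_j,b_j)$. This is exactly why the $d_j$ must be used as genuine generators rather than recovered from $t_j$'s and $e_j$'s.

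The repair is to carry the boxed tie component in every factor from the start instead of trying to append it at the end, which is what the paper does: writing $I=s\,t_1t_3\cdots t_{2k-1}\,s'$ by the normal form of the Brauer monoid \cite[Proposition 27]{AiArJu23}, and using that boxed partitions are idempotent under concatenation ($J\ast J=J$), one factors $(I,J)=(s,J)(t_1t_3\cdots t_{2k-1},J)(s',J)$; the outer factors are handled as in Proposition \ref{025} and the middle one via Theorem \ref{073} for $\BR(\J_n)$. Your box-by-box decomposition can perfectly well be kept — indeed, performing this three-fold factorization inside each block of $\bJ$, where the restricted tie is the full box, sidesteps any worry about whether $s,s'\preceq J$ — but within each block the hook part must be produced from the generators $d_j$, whose tie is built in, for instance $(t_1t_3\cdots t_{2k-1},b)=d_1d_3\cdots d_{2k-1}e_1\cdots e_{m-1}$ on a block of size $m$, and not from diagonal elements $t_j$ multiplied by tie idempotents.
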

\begin{proof}
Let $(I,J)\in\BR(\Br_n)$. Since $I\in\Br_n$, \cite[Proposition 27]{AiArJu23} implies that there are unique $s,s'\in\S_n$ and $k\leq\frac{n}{2}$ such that $I=st_1t_3\cdots t_{2k-1}s'$. Also, since $J$ is boxed, we have $(I,J)=(s,J)(t_1t_3\cdots t_{2k-1},J)(s',J)$. Observe that $(s,J)$ and $(s',J)$ belong to $\BR(\S_n)$, and $(t_1t_3\cdots t_{2k-1},J)$ belongs to $\BR(\J_n)$. So, Proposition \ref{025} and Theorem \ref{007} imply that $\BR(\Br_n)$ is generated by $d_1,\ldots,d_{n-1}$, $e_1,\ldots,e_{n-1}$ and $z_1,\ldots,z_{n-1}$.
\end{proof}

Due to the relations that define $\R(\Br_n)$, we have the following relations:\begin{equation}\label{038}z_id_jd_i=z_jd_i,\quad d_id_jz_i=d_iz_j,\quad|i-j|=1;\qquad z_id_i=d_i=d_iz_i;\qquad z_id_j=d_jz_i,\quad|i-j|>1.\end{equation}

\begin{rem}[Normal form]\label{074}
As shown in the proof of Proposition \ref{028}, every element $(I,J)\in\BR(\Br_n)$ can uniquely written as a product $(s,J)(t_1t_3\cdots t_{2k-1},J)(s',J)$. Thus, $(I,J)=ezd_1d_3\cdots d_{2k-1}z'$ with $e=(1,J)$, where $ez$ and $ez'$ are the normal forms of $(s,J)$ and $(s',J)$, respectively, as in Remark \ref{075}.
\end{rem}

\begin{thm}\label{016}
The monoid $\BR(\Br_n)$ is presented by generators $e_1,\ldots,e_{n-1}$ satisfying \eqref{023}, generators $z_1,\ldots,z_{n-1}$ satisfying \eqref{022} and \eqref{026}, and generators $d_1,\ldots,d_{n-1}$ subject to \eqref{036}, \eqref{037} and \eqref{038}.
\end{thm}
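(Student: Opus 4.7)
The plan is to mimic the strategy used for $\BR(\S_n)$ in Theorem \ref{017}: let $M$ be the abstract monoid defined by the given presentation and exhibit a normal form for its elements that matches the one for $\BR(\Br_n)$ given in Remark \ref{074}. Because all the listed relations \eqref{023}, \eqref{022}, \eqref{026}, \eqref{036}, \eqref{037}, \eqref{038} hold inside $\BR(\Br_n)$ (the first two families by the very definitions of $\BR(\S_n)$ and $\BR(\J_n)$ embedded in $\BR(\Br_n)$, the mixed relations \eqref{038} by inspection using $t_is_j=s_jt_i$ for $|i-j|>1$ and $t_is_i=s_it_i=t_i$, $s_it_jt_i=s_jt_i$ from \eqref{034}), the assignment $e_i,z_i,d_i\mapsto e_i,z_i,d_i$ extends to a monoid epimorphism $\varphi:M\twoheadrightarrow\BR(\Br_n)$; surjectivity is exactly Proposition \ref{028}.

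For injectivity, I would show that every element of $M$ admits a representative of the shape $e\,z\,d_1d_3\cdots d_{2k-1}\,z'$, where $e$ is a word in $e_1,\dots,e_{n-1}$, and $z,z'$ are words in $z_1,\dots,z_{n-1}$ of the form given by the $\BR(\S_n)$--normal form of Remark \ref{075}, and $k\in[0,\lfloor n/2\rfloor]$. Once this is established, the uniqueness of the analogous decomposition inside $\BR(\Br_n)$ (Remark \ref{074}) combined with the fact that $\varphi$ restricted to the submonoids generated by $\{e_i,z_i\}$ and by $\{e_i,d_i\}$ is already known to be injective by Theorems \ref{017} and \ref{073} respectively, will force $\varphi$ to be bijective on normal forms, hence an isomorphism.

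The reduction algorithm in $M$ would proceed in several stages. First, using \eqref{023}, \eqref{037}, \eqref{038} together with the commutations between $e_i$ and $z_j$ from \eqref{022}, all idempotent letters $e_i$ can be pushed to the left; the block of $e_i$'s appearing on the left can then be normalized inside $\C_n$. Second, the remaining word, alternating in $z$'s and $d$'s, must be rewritten so that all the $d$'s form a central block of the shape $d_1d_3\cdots d_{2k-1}$. This is where the key technical work lies: one uses \eqref{038} in the form $z_id_jd_i=z_jd_i$ and $d_id_jz_i=d_iz_j$ to migrate $z$--letters across a $d$--block while changing their indices, and one uses the Brauer--type relation $d_id_jd_i=e_jd_i e_j$ from \eqref{037} together with $d_i^2=d_i$ and $d_id_j=d_jd_i$ for $|i-j|>1$ to reduce the $d$--pattern to canonical form; the bookkeeping replicates the combinatorics of the semidirect product structure $\Br_n=\S_n\cdot\langle t_1,t_3,\ldots\rangle\cdot\S_n$ used in \cite[Proposition 27]{AiArJu23}.

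The main obstacle I anticipate is precisely this second stage: verifying that the relations \eqref{038} together with \eqref{036}, \eqref{037}, \eqref{022} are sufficient to realize every move needed to bring a general word into the canonical product $e\,z\,d_1d_3\cdots d_{2k-1}\,z'$, and in particular that no extra relation between the $z$--letters on the left and on the right is required beyond those already inherited from $\BR(\S_n)$. To handle this, I would argue by induction on the number $k$ of $d$--letters: for $k=0$ we land inside $\BR(\S_n)$ and apply Theorem \ref{017}; for the inductive step, use the relation $d_id_jd_i=e_jd_ie_j$ to absorb redundant $d$'s into idempotents (which then flow back to the left) and the mixed relations \eqref{038} to conjugate any $z$--letter appearing between two $d$'s across to the outer $z$ or $z'$ factor, invoking the $\BR(\S_n)$ presentation to keep these outer factors in normal form. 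Once the normal form is in place, injectivity of $\varphi$ follows because the surjection $\BR(\Br_n)\to M/{\sim}$ induced by the normal form assignments is inverse to $\varphi$.
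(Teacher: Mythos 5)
Your overall frame (check the relations to get an epimorphism $\varphi:M\twoheadrightarrow\BR(\Br_n)$ via Proposition \ref{028}, then exhibit a canonical form $e\,z\,d_1d_3\cdots d_{2k-1}\,z'$ in $M$ and match it against the unique decomposition of Remark \ref{074}) is the same architecture as the paper's proof, but the decisive step is missing: you only announce, and do not establish, that the listed relations suffice to bring an arbitrary word of $M$ into that canonical shape. The mechanism you sketch for it would not work as stated. The mixed relations \eqref{038} only permit the specific local moves $z_id_jd_i=z_jd_i$, $d_id_jz_i=d_iz_j$, $z_id_i=d_i=d_iz_i$ and distant commutation; there is no move that takes a single $z_i$ past an adjacent $d_j$ (exactly as $s_it_j$ with $|i-j|=1$ admits no such rewriting in $\Br_n$ itself), so ``conjugate any $z$--letter appearing between two $d$'s across to the outer $z$ or $z'$ factor'' is not an available elementary step, and the induction on the number of $d$--letters stalls precisely at the point you flag as the main obstacle. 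Carrying your plan through would amount to re-deriving the Brauer normal form from scratch inside $M$, which is a substantial piece of word combinatorics that the proposal does not supply.

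The paper closes this gap with a different key lemma that you may want to adopt: consider the projection $\phi_b:M\to\Br_n$ sending $\te_i\mapsto 1$, $\tz_i\mapsto s_i$, $\td_i\mapsto t_i$, and observe (Remark \ref{085}) that every defining relation \eqref{032}--\eqref{034} of $\Br_n$ is the $\phi_b$--image of a relation of $M$ up to central idempotents $\te_i$ (e.g.\ $s_i^2=1$ lifts to $\tz_i^2=\te_i$, $t_it_jt_i=t_i$ lifts to $\td_i\td_j\td_i=\te_j\td_i\te_j$). Hence the known rewriting of any word to the Brauer normal form $s\,t_1t_3\cdots t_{2k-1}\,s'$ of \cite[Proposition 27]{AiArJu23} can be replayed verbatim inside $M$, the only side effect being central $\te$'s, which are then absorbed into a saturated idempotent block $e'=e\,\te_{i_1}\cdots\te_{i_k}$ (this saturation, which your reduction also needs so that the idempotent part maps onto $(1,J)$ rather than a smaller tie, is omitted in your sketch); this is Proposition \ref{086}, and uniqueness of the decomposition in Remark \ref{074} then yields injectivity exactly as you intend. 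So either prove such a lifting statement, or genuinely carry out the reduction to canonical form; as written, the injectivity half of your argument rests on an unproved and, in its sketched form, unworkable rewriting claim.
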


\begin{rem}[Singular part]
It is an open problem to give a presentation for the singular part $s\R(\Br_n)$ of $\R(\Br_n)$. Observe that both $s\R(\S_n)$ and $s\R(\J_n)$ are subsemigroups of $s\R(\Br_n)$.
\end{rem}

\subsubsection{Proof of Theorem \ref{016}}\label{117}

Let $M$ be the monoid presented by generators $\te_1,\ldots,\te_{n-1}$, $\tz_1,\ldots,\tz_{n-1}$ and $\td_1,\ldots,\td_{n-1}$, subject to the following relations:\begin{gather}
\te_i^2=\te_i;\qquad \te_i\te_j=\te_j\te_i;\label{077}\\
\tz_i\tz_j\tz_i=\tz_j\tz_i\tz_j,\quad|i-j|=1;\qquad\tz_i\tz_j=\tz_j\tz_i,\quad|i-j|>1;\label{078}\\
\te_i\tz_j=\tz_j\te_i;\qquad\tz_i^2=\te_i;\qquad\te_i\tz_i=\tz_i;\label{079}\\
\td_i^2=\td_i;\qquad\td_i\td_j=\td_j\td_i,\quad|i-j|>1;\label{080}\\
\td_i\te_j=\te_j\td_i;\qquad\td_i\te_i=\td_i;\qquad\td_i\td_j\td_i=\te_j\td_i\te_j,\quad|i-j|=1;\label{081}\\
\tz_i\td_j\td_i=\tz_j\td_i,\quad\td_i\td_j\tz_i=\td_i\tz_j,\quad|i-j|=1;\qquad\!\tz_i\td_i=\td_i=\td_i\tz_i;\qquad\!\tz_i\td_j=\td_j\tz_i,\quad|i-j|>1.\label{082}
\end{gather}

\begin{lem}\label{084}
The map $\phi:M\to\BR(\Br_n)$ sending $\te_i\mapsto e_i$, $\tz_i\mapsto z_i$ and $\td_i\mapsto d_i$ is a monoid epimorphism.
\end{lem}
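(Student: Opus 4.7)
The plan is to establish Lemma \ref{084} by a direct verification: one checks that the assignment $\te_i\mapsto e_i$, $\tz_i\mapsto z_i$, $\td_i\mapsto d_i$ preserves every defining relation of $M$, and then invokes Proposition \ref{028} to conclude surjectivity. Because $M$ is given by generators and relations, it suffices to verify that each relation in the presentation of $M$ holds in $\BR(\Br_n)$ when the generators are replaced by their prescribed images; this yields a well-defined monoid homomorphism by the universal property of presentations.

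First, I would run through the defining relations of $M$ one by one, matching them to the corresponding relations already recorded for $\BR(\Br_n)$ in the excerpt. Concretely, the relations \eqref{077} on the $\te_i$ correspond to \eqref{023}, which hold in $\BR(\Br_n)$ since the $e_i$ generate the submonoid $\C_n$ of idempotents. The braid and far-commutation relations \eqref{078} on the $\tz_i$ match the first two relations of \eqref{022}, and the mixed relations \eqref{079} between the $\te_i$ and $\tz_i$ are precisely the third relation of \eqref{022} together with the two relations of \eqref{026}. The relations \eqref{080} and \eqref{081} governing the $\td_i$ and their interaction with the $\te_i$ are precisely those in \eqref{036} and \eqref{037}, which are known to hold in $\BR(\J_n)\subseteq\BR(\Br_n)$ by Theorem \ref{073}. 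Finally, the mixed relations \eqref{082} between $\tz_i$ and $\td_j$ are exactly the relations listed in \eqref{038}, which were derived from the defining relations of $\R(\Br_n)$.

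Once every relation has been matched, the universal property of the presentation of $M$ yields that $\phi$ extends uniquely from generators to a monoid homomorphism $\phi\colon M\to\BR(\Br_n)$. For surjectivity I would simply invoke Proposition \ref{028}, which asserts that $\BR(\Br_n)$ is generated as a monoid by $e_1,\ldots,e_{n-1}$, $z_1,\ldots,z_{n-1}$ and $d_1,\ldots,d_{n-1}$, namely the images under $\phi$ of the generators of $M$.

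There is no real obstacle in this argument; it is a bookkeeping verification. The only mild care required is to confirm that the list of relations in \eqref{077}--\eqref{082} is genuinely a complete replica (with tildes) of the relations assembled in \eqref{023}, \eqref{022}, \eqref{026}, \eqref{036}, \eqref{037} and \eqref{038}, with no stray relation omitted. The substantive content of the theorem lies not in this lemma but in the converse direction, that is, in showing that $\phi$ is injective; Lemma \ref{084} itself is the straightforward half of the proof of Theorem \ref{016}.
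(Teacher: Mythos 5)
Your argument is correct and matches the paper's own proof essentially verbatim: both verify that the relations \eqref{077}--\eqref{082} are sent to \eqref{023}, \eqref{022}, \eqref{026} (holding in $\BR(\S_n)$), \eqref{036}, \eqref{037} (holding in $\BR(\J_n)$) and \eqref{038}, and conclude surjectivity from the fact that the $e_i$, $z_i$, $d_i$ generate $\BR(\Br_n)$ (Proposition \ref{028}). No issues.
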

\begin{proof}
Observe that, under $\phi$, relations in \eqref{077} to \eqref{079} are sent to \eqref{023}, \eqref{022} and \eqref{026}, which hold in $\BR(\S_n)$, and relations in \eqref{080} and \eqref{081} are sent to \eqref{036} and \eqref{037}, which hold in $\BR(\J_n)$. On the other hand, relations in \eqref{082} are sent to \eqref{038}. Therefore $\phi$ is a monoid epimorphism.
\end{proof}

\begin{rem}\label{085}
By a direct inspection of the relations that define $M$, we can conclude that the map $\phi_b:M\to\Br_n$ sending $\tz_i\mapsto s_i$, $\td_i\mapsto t_i$ and $\te_i\mapsto1$ is a monoid epimorphism. Moreover, each relation in \eqref{032} to \eqref{034} can be obtained, under $\phi_b$, from a relation in \eqref{077} to \eqref{082}. Thus, as each $\te_i\in Z(M)$, then every relation in $M$ can be written as $(er,e'r')$, where $e,e'$ are words in the generators $\te_i$, and $r,r'$ are words in the generators $\tz_i,\td_j$, representing elements that are sent, under $\phi_b$, to the same element of $\Br_n$.
\end{rem}

\begin{pro}\label{086}
Every element $g\in M$ can be represented by a word in the generators $\te_i$, $\tz_i$ and $\td_i$, such that the word obtained from it when replacing $\te_i$ by $e_i$, $\tz_i$ by $z_i$ and $\td_i$ by $d_i$ is a normal form of $\phi(g)$.
\end{pro}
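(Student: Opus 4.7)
The plan is to mimic, inside the monoid $M$, the normal form given in Remark \ref{074} for elements of $\BR(\Br_n)$, using only the defining relations \eqref{077}--\eqref{082}. Given an arbitrary word $w$ representing $g\in M$, I would proceed in four steps.

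First, using the commutation relations $\te_i\tz_j=\tz_j\te_i$ in \eqref{079} and $\td_i\te_j=\te_j\td_i$ in \eqref{081}, I would push every occurrence of a generator $\te_i$ to the left, so that $w=eu$ where $e$ is a word in the generators $\te_1,\ldots,\te_{n-1}$ and $u$ is a word in the generators $\tz_i,\td_j$. Second, using Remark \ref{085}, I would consider the image $\phi_b(u)\in\Br_n$ and fix a normal decomposition $\phi_b(u)=s\,t_1t_3\cdots t_{2k-1}\,s'$ coming from \cite[Proposition 27]{AiArJu23}. The goal is then to transform $u$ in $M$ into a word $z\,\td_1\td_3\cdots\td_{2k-1}\,z'$, where $z,z'$ are words in the $\tz_i$'s; by Remark \ref{085}, every Brauer relation used to reach this shape can be lifted to a relation in $M$ (using \eqref{078}, \eqref{080}, \eqref{082}), possibly at the cost of introducing extra $\te_j$ factors via the relation $\td_i\td_j\td_i=\te_j\td_i\te_j$ in \eqref{081}. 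These spurious $\te_j$'s are then absorbed into the left factor $e$ by commutativity.

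Third, once the middle factor has been isolated as $\td_1\td_3\cdots\td_{2k-1}$, I would apply the normal-form reduction for $\BR(\S_n)$ from Remark \ref{075} to the $\tz$-words $z$ and $z'$ separately: each can be rewritten, via \eqref{078} and \eqref{079}, as a composition word times a minimal-length representative of a permutation, again collecting the composition factors into $e$. Fourth, using the idempotency and commutativity of the $\te_i$'s in \eqref{077}, I would normalise $e$. The word produced has the shape $e\,z\,\td_1\td_3\cdots\td_{2k-1}\,z'$, which under the replacements $\te_i\mapsto e_i$, $\tz_i\mapsto z_i$, $\td_i\mapsto d_i$ becomes exactly a word of the form described in Remark \ref{074}; since at every step the relations used hold in $\BR(\Br_n)$ via $\phi$ (Lemma \ref{084}), the resulting word represents $\phi(g)$ and is in normal form.

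The main obstacle will be step two: controlling the lifts of the Brauer relations to $M$. The relation $t_it_jt_i=t_i$ in $\Br_n$ lifts to $\td_i\td_j\td_i=\te_j\td_i\te_j$, so every application produces $\te_j$ factors that must be commuted past subsequent $\tz$'s and $\td$'s without altering the underlying Brauer element; this is guaranteed by the centrality of the $\te_i$'s (they commute with every generator by \eqref{079} and \eqref{081}), but requires a careful bookkeeping argument to show that the process terminates with $\phi_b$ of the remaining $\tz\td$-word still equal to $s\,t_1t_3\cdots t_{2k-1}\,s'$. Once this is established, the rest of the argument is essentially a repeat of the normal-form reductions already carried out for $\BR(\S_n)$ in Remark \ref{075} and for $\BR(\J_n)$ in Theorem \ref{073}.
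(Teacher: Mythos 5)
Your route is essentially the paper's: push the $\te_i$'s to the left, lift the rewriting of $\phi_b(u)$ towards its Brauer normal form into $M$ (this is precisely what Remark \ref{085} supplies, and it is how the paper deals with the bookkeeping you flag as the main obstacle: every relation in \eqref{032}--\eqref{034} is the $\phi_b$--image of a relation in \eqref{077}--\eqref{082}, up to $\te$--factors, which are central and get absorbed into the left prefix), and then read the image word off as the normal form of Remark \ref{074}. So your steps one and two are sound in outline and match the paper's argument.

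The gap is in the final step. For the image word to be the normal form of Remark \ref{074} you need the composition prefix to map exactly onto $(1,J)$, where $J$ is the \emph{whole} boxed part of $\phi(g)$; likewise Remark \ref{075} requires the composition part of each $\BR(\S_n)$--factor to be spelled out in full. Your procedure only retains the $\te$'s that happen to be produced by the rewriting, plus those you insert for the $\tz$--parts, and says nothing about the boxes carried implicitly by the surviving $\td_i$'s, so in general the prefix you end up with is strictly too fine. The simplest instance: for $g=\tz_1$ (or $g=\td_1$) your four steps perform no rewriting and collect no $\te$'s at all, so the output word maps to the word $z_1$ (resp.\ $d_1$), whereas the normal form is the word $e_1z_1$ (resp.\ $e_1d_1$). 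The paper closes this with one short move you should add: once $g$ is written as $eu$ with $\phi_b(u)$ in Brauer normal form, write $u=u_{i_1}\cdots u_{i_k}$ and replace $e$ by $e'=e\,\te_{i_1}\cdots\te_{i_k}$, which is legitimate by $\te_i\tz_i=\tz_i$ in \eqref{079}, $\td_i\te_i=\td_i$ in \eqref{081} and centrality of the $\te_i$'s. Since $e'$ now covers the box of every surviving letter and the boxed component of a product in $\BR(\Br_n)$ is the join of the boxed components of its factors, $\phi(e')$ is automatically all of $J$, and the image word of $e'u$ is exactly the normal form of Remark \ref{074}.
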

\begin{proof}
Remark \ref{085} and the fact that each $\te_i$ belongs to the center of $M$, imply that $g$ can be represented by a word $eu$, where $e$ is a word in the generators $\te_i$, and $u$ is a word in the generators $\tz_i$ and $\td_i$, which becomes a normal form in $\Br_n$ \cite[Proposition 27]{AiArJu23} when replacing $\tz_i$ by $s_i$ and $\td_i$ by $t_i$. Observe that if $u=u_{i_1}\cdots u_{i_k}$ with $u_{i_j}\in\{\tz_{i_j},\td_{i_j}\}$, then $eu$ is equivalent to $(e\te_{i_1}\cdots\te_{i_k})u$. Since $e':=e\te_{i_1}\cdots\te_{i_k}$ contains all possible generators obtained from $u$, then the word obtained from $e'u$ when replacing $\te_i$ by $e_i$, $\tz_i$ by $z_i$ and $\td_i$ by $d_i$ is a normal form of $\phi(g)$ as in Remark \ref{074}.
\end{proof}

\begin{proof}[Proof of Theorem \ref{016}]
Let $g,g'\in M$ such that $\phi(g)=\phi(g')$, and let $u$ be the normal form of $\phi(g)$ as in Remark \ref{074}. Proposition \ref{086} implies both $g,g'$ have word representatives in the generators $\te_{i,j}$, $\tz_i$ and $\td_i$, which become $u$ when replace $\te_{i,j}$ by $e_{i,j}$, $\tz_i$ by $z_i$ and $\td_i$ by $d_i$. Since $u$ is uniquely defined, these words must be identical. Thus $g=g'$. Therefore $\phi$ is a monoid isomorphism.
\end{proof}


\subsubsection*{Acknowledgements}

We thank to J. Juyumaya for encouraging us to explore a potential algebra associated with the monoid $bJ_n$.

The first author was supported by the grant DIDULS PR232146.

\bibliographystyle{plainurl}
\bibliography{../Files/bibtex.bib}

\end{document}